\definecolor{redish}{rgb}{0.95,0.20,0.20}
\theoremstyle{plain}
\newtheorem{thm}{Theorem}[section]
\newtheorem{prop}{Proposition}[section]
\newtheorem{lem}[prop]{Lemma}
\newtheorem{defi}[prop]{Definition}
\newtheorem{rmk}[prop]{Remark}
\numberwithin{equation}{section}
\newcommand {\R} {\mathbb{R}} 
 \newcommand {\N} {\mathbb{N}}
\newcommand {\C} {\mathbb{C}} 
\newcommand {\p} {\partial}
\newcommand{\eps}{\epsilon}
\newcommand{\vareps}{\varepsilon}
\newcommand{\ol}{\overline}
\newcommand{\abs}[1]{\lvert #1 \rvert}          
\newcommand{\norm}[1]{\lVert #1 \rVert}         
\newcommand{\ccdot}{\,\cdot\,}
\newcommand{\s}{\hspace{0.5pt}}
\title[]{An inverse problem for a semilinear elliptic equation on conformally transversally anisotropic manifolds}
\author[A. Feizmohammadi]{Ali Feizmohammadi}
\address{The Fields Institute for Research in Mathematical Sciences, Toronto, Canada}
\curraddr{222 College St, Toronto, ON M5T 3J1}
\email{afeizmoh@fields.utoronto.ca}
\author[T. Liimatainen]{Tony Liimatainen}
\address{Department of Mathematics and Statistics, University of Helsinki, Helsinki, Finland}
\curraddr{}
\email{tony.liimatainen@helsinki.fi}
\author[Y.-H. Lin]{Yi-Hsuan Lin}
\address{Department of Applied Mathematics, National Yang Ming Chiao Tung University, Hsinchu, Taiwan}
\curraddr{}
\email{yihsuanlin3@gmail.com}
\begin{document}
	
	\maketitle

	\begin{abstract}
	Given a conformally transversally anisotropic manifold $(M,g)$, we consider the semilinear elliptic equation
	$$(-\Delta_{g}+V)u+qu^2=0\quad \text{on $M$}.$$
	We show that an a priori unknown smooth function $q$ can be uniquely determined from the knowledge of the Dirichlet-to-Neumann map associated to the semilinear elliptic equation. This extends the previously known results of the works~\cite{FO19,LLLS2019inverse}. Our proof is based on analyzing higher order linearizations of the semilinear equation with non-vanishing boundary traces and also the study of interactions of two or more products of the so-called Gaussian quasimode solutions to the linearized equation.

		\medskip
		
		\noindent{\bf Keywords.} Inverse problems, boundary determination, semilinear elliptic equation, Riemannian manifold, conformally transversally anisotropic, Gaussian quasimodes, WKB construction.
		
		
	\end{abstract}

	\tableofcontents

\section{Introduction}
	
Let $(M,g)$ be a smooth compact Riemannian manifold of dimension $n\geq 3$ with a smooth boundary. We assume that $(M,g)$ is {\em conformally transversally anisotropic} (CTA), that is to say, 
\begin{equation}
\label{M_1}
M \Subset I \times M_0, 
\end{equation}
and the metric $g$ has a smooth extension to $\R\times M_0$ so that
\begin{equation}
\label{M_2}
g= c(x_1,x')(\,dx_1\otimes\,dx_1+ g_0(x')),
\end{equation}
where $(M_0,g_0)$ is a compact $(n-1)$-dimensional Riemannian manifold with a smooth boundary $\p M_0$ \cite{DosSantosFerreira2009}. Let $q, V$ be real-valued smooth functions on $M$ and consider the semi-linear elliptic equation:

\begin{equation}\label{pf}
\begin{aligned}
\begin{cases}
(-\Delta_{g}+V) u+qu^2=0, 
&\text{ in } M
\\
u= f 
&\text{ on }\p M
\end{cases}
    \end{aligned}
\end{equation}

We make the standing assumption that $0$ is not a Dirichlet eigenvalue for the operator $-\Delta_g+V$. As shown in \cite[Proposition 2.1]{LLLS2019inverse}, equation \eqref{pf} is well-posed for sufficiently small Dirichlet data $f$. Precisely, given any $\alpha \in (0,1)$, there exists $C,\delta >0$ such that for all 
$$ f \in U_\delta=\left\{h \in C^{2,\alpha}(\p M)\,|\, \|f\|_{C^{2,\alpha}(\p M)} \leq \delta \right\},$$ 
the equation \eqref{pf} has a unique solution $u$ in the set 
\begin{equation}
\label{u_energy}
\left\{w \in C^{2,\alpha}(M)\,|\, \norm{w}_{C^{2,\alpha}(M)}\leq C\delta \right\}.
\end{equation}
Moreover,
$$ \norm{u}_{C^{2,\alpha}(M)} \leq C\norm{f}_{C^{2,\alpha}(\p M)}.$$
We define the associated \emph{Dirichlet-to-Neumann} map (DN map in short) for \eqref{pf} by
\[ 
\Lambda_{q} f = \left. \p_\nu u \right|_{\p M} \text{ for }  \, f \in U_\delta,
\]
where $u$ is the unique solution to \eqref{pf} that lies in the set \eqref{u_energy} and $\nu$ denotes the unit outward normal vector field on $\p M$.

In this paper, we consider the following inverse problem: Given an a priori fixed CTA manifold $(M,g)$ and a smooth zeroth order coefficient $V$, is it possible to recover an a priori unknown function $q$ given the knowledge of the map $\Lambda_{q}$? We show that this is indeed possible under the following minor technical assumption on the transversal manifold $(M_0,g_0)$: 
\begin{itemize}
	\item[(H1)]{\it Given any $p\in M_0$, there exists a non-tangential geodesic $\gamma$, which has no self-intersections and  passes through $p$.}
\end{itemize}
Precisely, we prove the following uniqueness result.

\begin{thm}\label{t1}
Let $(M,g)$ be a conformally transversally anisotropic manifold of the form \eqref{M_1}--\eqref{M_2} and suppose that (H1) is satisfied. Let $V\in C^{\infty}(M)$ and assume that zero is not a Dirichlet eigenvalue for $-\Delta_g+V$ on $M$. Let $q_1,q_2 \in C^{\infty}(M)$ and assume that for some $\delta>0$ sufficiently small and for any  $f\in U_\delta$
\[
\Lambda_{q_1}f = \Lambda_{q_2}f.
\]
Then
\[
  q_1=q_2 \quad \text{in $M$}.
\]
\end{thm}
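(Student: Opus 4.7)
First I would perform a second-order linearization of \eqref{pf} to derive a trilinear integral identity. Given three admissible boundary data $f_1,f_2,f_3\in U_\delta$, let $u(\cdot\,;\epsilon_1,\epsilon_2)$ denote the solution with Dirichlet data $\epsilon_1 f_1+\epsilon_2 f_2$ for small parameters. The first-order derivatives $v_j=\partial_{\epsilon_j}u|_{\epsilon=0}$ solve $(-\Delta_g+V)v_j=0$ with boundary data $f_j$ for $j=1,2$, while $w=\partial_{\epsilon_1}\partial_{\epsilon_2}u|_{\epsilon=0}$ solves $(-\Delta_g+V)w=-2q\,v_1v_2$ with zero boundary trace. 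Testing against a third solution $v_3$ of $(-\Delta_g+V)v_3=0$ with non-vanishing boundary trace $f_3$, integrating by parts, and using $\Lambda_{q_1}f=\Lambda_{q_2}f$ on a three-parameter family of boundary data yields
\begin{equation}\label{eq:plan_intid}
\int_M (q_1-q_2)\,v_1v_2v_3\,dV_g=0
\end{equation}
for every triple of solutions of the linearized equation.

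Next, I would construct three families of special solutions using the CTA structure $g=c(dx_1^2+g_0)$. After a standard conjugation by a power of $c$, the operator $-\Delta_g+V$ reduces to one of the form $-\partial_{x_1}^2-\Delta_{g_0}+\widetilde V(x_1,x')$. For each $\lambda\in\R$ and large $\tau$, the idea is to produce solutions
$$v_j=e^{s_j x_1}\bigl(a_j(x_1,x')+r_j\bigr),\qquad j=1,2,3,$$
with complex exponents satisfying $s_1+s_2+s_3=i\lambda$. Two of the transversal amplitudes, say $a_1$ and $a_2=\overline{a_1}$, should be complex-conjugate Gaussian beam quasimodes of the transversal operator concentrating on a non-tangential, non-self-intersecting geodesic $\gamma$ of $M_0$ through a prescribed point $p\in M_0$ (existence guaranteed by (H1)); the third amplitude $a_3$ is a smooth WKB profile solving an appropriate transport equation. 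The remainders $r_j$ are produced via Carleman estimates with weight $e^{\pm\tau x_1}$ adapted to the CTA geometry, with $\norm{r_j}_{L^2(M)}=o(1)$ as $\tau\to\infty$.

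Substituting these solutions into \eqref{eq:plan_intid} and sending $\tau\to\infty$, the product $v_1v_2\approx e^{i\,\im(s_1+s_2)x_1}|a_1|^2$ concentrates, by the standard Gaussian beam stationary-phase computation, along $I\times\gamma\subset M$, so \eqref{eq:plan_intid} reduces in the limit to
$$\int_{I} e^{i\lambda x_1}\int_{\gamma}(q_1-q_2)(x_1,\gamma(t))\,\phi(x_1,\gamma(t))\,dt\,dx_1=0,$$
where $\phi$ is a weight determined by $a_3$ and the principal Gaussian beam amplitude. Varying $\lambda\in\R$ and Fourier-inverting in $x_1$, combined with the freedom to vary $a_3$ and the choice of $\gamma$ (using (H1) to cover every point of $M_0$), produces the vanishing of an attenuated geodesic ray transform of $(q_1-q_2)(x_1,\cdot)$ on $M_0$ for every $x_1$. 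Injectivity of this transform on the class of non-tangential, non-self-intersecting geodesics then forces $q_1\equiv q_2$ on $M$.

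The principal obstacle I anticipate is handling the \emph{odd} number of quasimodes in the product $v_1v_2v_3$: in contrast with the cubic nonlinearity, where four solutions pair naturally into two moduli, one factor here must remain unpaired. Keeping $v_3$ a smooth WKB solution while forcing $v_1v_2$ to concentrate to a geodesic delta measure requires a carefully synchronized construction — matching the exponents $s_j$ so that both the Gaussian beam and the WKB transport equations are solvable, enforcing $s_1+s_2+s_3=i\lambda$ to encode the Fourier variable, and controlling the remainders uniformly as $\tau\to\infty$. A secondary difficulty is ensuring that the resulting weighted ray transform is invertible, which may require varying the phase and amplitude of $a_3$ to produce a sufficiently rich set of weights $\phi$, together with an additional unique-continuation argument to remove the weight.
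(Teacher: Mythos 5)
Your plan correctly derives the trilinear identity
\begin{equation*}
\int_M (q_1-q_2)\,v_1v_2v_3\,dV_g=0
\end{equation*}
from the second linearization, and the choice $a_2=\overline{a_1}$ so that $v_1v_2$ concentrates on $I\times\gamma$ is natural. The difficulty you flag at the end — the unpaired third factor — is in fact a fatal obstruction to this route, and it is exactly the obstruction the paper is built to circumvent by a completely different mechanism.

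Concretely, after conjugation the exponents must satisfy $\re(s_1+s_2+s_3)=0$ and, for stationary phase to localize cleanly, the full ``lightlike'' covectors $\overline\xi_k=c_k e_1+\mathbf{i}\xi_k$ (with $|\xi_k|=c_k$) should sum to zero. Three such lightlike vectors can only sum to zero when two of the transversal parts $\xi_k$ are linearly dependent — the paper records this as a remark in Section~\ref{sec:choice_of_vecs_3rd_lin}. This forces your configuration: $\gamma_1=\gamma_2=\gamma$ with conjugate amplitudes and a third factor that is \emph{not} a Gaussian beam and therefore does not concentrate. You are then left, as you correctly predict, with a \emph{weighted geodesic ray transform} on $(M_0,g_0)$. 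But two things go wrong. First, the weight $\phi$ is not a free parameter: because $s_3$ ends up purely imaginary (or with a $\tau$-scaling dictated by the pairing), the amplitude $a_3$ must satisfy a transversal transport/eigenvalue equation, so you cannot ``vary $a_3$ to produce a sufficiently rich set of weights.'' Second, and more decisively, even the \emph{unweighted} geodesic ray transform on a general compact $(M_0,g_0)$ satisfying only (H1) is not known to be injective — injectivity of such transforms requires additional geometric hypotheses (simplicity, existence of a strictly convex function, etc.) that Theorem~\ref{t1} does not assume. This is precisely the distinction the paper emphasizes: the nonlinear approach is supposed to \emph{avoid} reducing to a ray transform on the transversal manifold.

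The paper's route is therefore essentially different. It over-differentiates: it uses the third and fourth order linearizations with boundary data that are themselves polynomials in the small parameters, so that the higher linearized equations acquire non-vanishing boundary traces. It then constructs asymptotic solutions of the \emph{second} and \emph{third} linearized equations via a WKB ansatz for $e^{\tau\Psi}b_\tau$, controlling the remainder via a new Carleman estimate with boundary terms and matching boundary data with a boundary determination argument (Propositions~\ref{2nd_lin_sol_fixed_bndr}, \ref{3rd_lin_sol_fixed_bndr}, Lemma~\ref{lem0}). The resulting integral identities effectively contain products of four (third order) and five (fourth order) Gaussian quasimodes whose associated lightlike covectors sum to zero while the transversal geodesics intersect \emph{properly pairwise} at a single point $p_0$. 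Stationary phase is then applied at the intersection point $p_0$ — a point concentration in $M_0$, not a line concentration — giving first $q_1^2=q_2^2$ (from the third-order identity) and then $q_1^3=q_2^3$ (from the fourth-order identity), hence $q_1=q_2$, with no ray transform inversion and no extra geometric hypotheses on $M_0$. Your sketch, by contrast, stops at the second linearization and would require an injective (weighted, non-free-weight) ray transform, which is not available here.
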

We will provide a discussion of the assumption (H1) as well as the main novelties of Theorem~\ref{t1} in Section~\ref{key_ideas}.

\subsection{Previous literature}
Inverse problems for non-linear partial differential equations is a topic with a vast literature. When the manifold is assumed to be Euclidean, the first result goes back to the work Isakov and Sylvester in \cite{isakov1994global} where the authors considered the equation
$$-\Delta u +F(x,u)=0,$$
on a Euclidean domain of dimension greater than or equal to three and studied the problem of recovering a class of non-linear functions $F(x, u)$ that satisfy a homogeneity property as well as certain monotonicity and growth conditions on its partial derivatives. The analogous problem in dimension two was first solved by Isakov and Nachman in \cite{victorN}. For further results in Euclidean geometries, we refer the reader to the works 
\cite{sun2004inverse, sun2010inverse,LLLS2019inverse,LLLS2021b,MR4052205} in the context semilinear elliptic equations, to \cite{sun1996,sun1997inverse,MR1944036,MR4138229,LLS18, CF20,MR4300916,MR4227095,MR3964222,MR4197837} in the context of quasilinear elliptic equations and to \cite{lai2022inverse,lin2020monotonicity} for fractional semilinear elliptic equations. We also mention the early work \cite{isakov1993uniqueness} and the work \cite{MR3168271} on similar results on Euclidean geometries for parabolic equations.

Most of the results discussed above are based on the idea of higher order linearization of nonlinear equations. The idea of a first or a second order linearization was initiated by in \cite{isakov1993uniqueness,isakov1994global} and the idea of higher order linearizations was introduced and developed fully by Kurylev, Lassas and Uhlmann \cite{kurylev2018inverse} in the context of nonlinear hyperbolic equations over Lorentzian geometries. There, the authors showed that in geometric settings, it is possible to solve certain classes of inverse problems for nonlinear hyperbolic equations in a much broader geometric generality compared to analogous inverse problems stated for linear hyperbolic equations. We refer the reader to the works \cite{wang2016quadratic,lassas2017determination,lassas2018inverse,MR4299822,hintz2020inverse,MR4182329,MR4336252,lassas2021stability} for more examples of inverse problems for nonlinear hyperbolic equations solved in broad Lorentzian geometries. We also point out the simultaneous recovery results \cite{lin2021determining,lin2021simultaneous} in inverse problems for semilinear parabolic and hyperbolic equations in the Euclidean space.

Recently, the works \cite{FO19,LLLS2019inverse} introduced a similar higher order linearization approach in the context of semilinear elliptic equations on CTA manifolds. We also refer the reader to the more recent works \cite{KrUh20,MR4332042} on study of similar inverse problems for nonlinear elliptic equations stated on CTA manifolds. In \cite{FO19,LLLS2019inverse}, it was proved that for elliptic semilinear equations of the form
\begin{equation}\label{lit_nonlinear}
-\Delta_g u + F(x,u)=0\quad \text{on $M$},
\end{equation}
with non-linear functions $F(x,z)$ that depend analytically on $z$, the problem of recovering the differentials $\p^k_z F(x,0)$ with $k\geq 3$ is equivalent to the question of injectivity of products of four solutions to the linearized equation
$$ (-\Delta_g+V) u=0\quad \text{on $M$}.$$
This density property was subsequently proved in \cite{FO19,LLLS2019inverse} without imposing any geometric assumptions on the transversal manifold $(M_0,g_0)$, through studying products of four Gaussian quasimode solutions to the linear equation. The underlying theme discovered in the latter works is that one can solve inverse problems for nonlinear elliptic equations in CTA manifolds without imposing additional strong assumptions on the transversal manifold $(M_0,g_0)$. This is in sharp contrast to the study of inverse problems for linear elliptic equations on CTA manifolds \cite{ferreira2009limiting,ferreira2013calderon} where additional strong assumptions must be imposed on the transversal manifold such as simplicity or existence of a strictly convex function on $(M_0,g_0)$. 

In this paper, we have considered an extension of \cite{FO19,LLLS2019inverse} that allows non-linearities $F(x,u)$ in \eqref{lit_nonlinear} that have a quadratic term in $u$. As far as we know, the only previous result that is concerned with recovery of quadratic non-linear functions on CTA manifolds is \cite[Theorem 2]{FO19} in the context of three and four dimensional CTA manifolds under additional geometric assumptions on the transversal manifold.

\subsection{Outline of the key ideas}
\label{key_ideas}

One of the key themes in the recent works that study inverse problems for nonlinear equations of the form 
$$-\Delta_g u + q u^m=0,\quad \text{on $M$},$$
on CTA manifolds $(M,g)$ with any integer $m\geq 2$ is the reduction from the problem of recovering the unknown coefficient $q$ to the density problem of showing that the products of $m+1$ harmonic functions on $(M,g)$ forms a dense set in $L^\infty(M)$. This reduction is based on an $m$-fold linearization argument for the nonlinear equation.

When $m\geq 3$, the latter density problem involves the product of four harmonic functions. Following the arguments of \cite{LLLS2019inverse} and choosing harmonic functions based on Gaussian quasimode constructions near four intersecting geodesics on the transversal factor $(M_0,g_0)$, the density property can be proved. However, when $m=2$, one only obtains product of three Gaussian quasimodes and this is not a sufficiently reach set to conclude our desired density claim. 

In this paper, we introduce a method to solve the coefficient determination problem concerning the case $m=2$, by considering further linearizations of the equation up to fourth order, rather than just considering the second order linearization of the equation. In this sense we over-differentiate the nonlinearity. This will allow us to \emph{implicitly} obtain products of more harmonic functions. We remark that in analyzing the third and fourth order linearization of the equation \eqref{pf}, one important step is to try to understand the interactions of two Gaussian quasimode solutions to the linearized equation, namely we need to analyze an equation of the form
$$ -\Delta_g w = fu_1 u_2\quad \text{on $M$},$$
where $u_1$ and $u_2$ are two Gaussian quasimodes. To the best of our knowledge, a treatment of the latter equation does not exist in the literature. We show that the latter equation can be solved asymptotically with respect to the semi-classical parameter of the Gaussian quasimodes and in doing so we obtain precise closed form expressions for $w$ modulo a small correction term, see Section~\ref{Sec:Gauss_interac}. This will be partly based on a WKB approximation for $w$ as well as a new Carleman estimate on CTA manifolds with boundary terms. As the correction term has a non-vanishing trace on $\p M$, we also need to introduce a variant of the higher order linearization method with a family of Dirichlet data that also depend on additional powers of the involved small parameters (see Section~\ref{sec:higher_ord_lin_with_bndr_vals} and also Section~\ref{Sec:proof_of_t1}). 

Let us remark in closing that our assumption (H1) simplifies the presentation of the Gaussian quasimode solutions to the linearized equation \eqref{linear_eq}. It is well known that Gaussian quasimodes for equation \eqref{linear_eq} can also be constructed in the absence of (H1), see for example \cite{ferreira2013calderon}. However, in this paper we impose the mild assumption (H1) to better convey the key ideas discussed above.

\subsection{Organization of the paper}

The paper is organized as follows. In Section \ref{Section 2}, we reduce our the setup of our study to a case where the conformal factor $c$ in \eqref{M_2} of the CTA manifold is constant $1$. There we also review the higher order linearization method, and derive the linearized equations and associated integral identities we use. We review suitable Gaussian quasimodes for the first linearized equations in Section \ref{sec:CGOs}. In Section \ref{Section 4}, we find solution formulas for the special solutions of the second and third linearized equations. In Section \ref{Sec:proof_of_t1} we prove Theorem \ref{t1} by utilizing these solutions. Finally, we prove a boundary determination result, derive Carleman estimates and compute coefficients related to products of solutions in Appendix. 

\section{Preliminaries}\label{Section 2}

%

\subsection{Reduction to the case $c=1$}
We show that for our purposes we can assume without any loss in generality that $c \equiv 1$. This is standard, see e.g. \cite{DosSantosFerreira2009} or \cite[Section 2.3]{FO19}. To see this, let us define $\hat{g}=(dx_1)^2+g$ so that $g=c\hat{g}$. Using the transformation law for changes of the Laplace-Beltrami operator under conformal rescalings of the metric, we write
\begin{align}\label{conformal} 
	c^{\frac{n+2}{4}}(-\Delta_{g}u+Vu+qu^2)=-\Delta_{\hat{g}}v+ \hat{V}v+\hat{q}v^2,
\end{align}
where $v=c^{\frac{n-2}{4}}u$, $\hat{V}=cV-(c^{\frac{n-2}{4}}\Delta_{g}\,c^{-\frac{n-2}{4}})$ and $\hat{q}=c^{-\frac{n-2}{2}}q$. This shows that there exists a one to one correspondence between solutions to \eqref{pf} with $f \in U_\delta$ and solutions to the following equation
\\
\begin{equation}\label{pf1}
\begin{aligned}
\begin{cases}
(-\Delta_{\hat{g}}+\hat{V})v+\hat{q}v^2=0, 
& \quad x \in M
\\
v= h, 
&\quad x \in \p M
\end{cases}
    \end{aligned}
\end{equation}
provided that $\|c^{-\frac{n-2}{4}}h\|_{C^{s}(\p M)} \leq \delta$. Hence, the DN map for \eqref{pf} determines the DN map for \eqref{pf1}. Thus the problem of unique recovery of $q$ from the DN map for \eqref{pf} is equivalent to that of determining $\hat{q}$ from the DN map for \eqref{pf1}. With this observation in mind, for the remainder of this paper and without loss of generality, we assume that 
$c \equiv 1$ so that $$g=dx_1\otimes dx_1+g_0.$$

\subsection{Higher order linearization method with boundary values}\label{sec:higher_ord_lin_with_bndr_vals}
In this section, we discuss the higher order linearization method of equation \eqref{pf}. Our method is slightly different from the, by now standard, one \cite{LLLS2019inverse, FO19}. The difference is that we include boundary terms, which are not linear in the used small parameters.

Let $\eps_i\in \R$  and $f_i, f_{ij}, f_{i jk }\in C^{2,\alpha}(\p M)$, for some $0<\alpha<1$ and for $i,j,k =1,\ldots, 4$, and $\eps=(\eps_1,\eps_2,\eps_3,\eps_4)$. In the most general case of this paper, we take boundary values $f$ to be of the form 
\begin{align}\label{f_epsilon}
	f_\eps:=\displaystyle \sum_{i=1}^4\eps_i f_i+\sum_{i,j =1}^4 \eps_i \eps_j f_{ij}+\sum_{i,j,k =1}^4\eps_i\eps_j\eps_k f_{ijk} \quad \text{ on }\quad \p M.
\end{align}
Observe that $f_\eps \in U_\delta$ for sufficiently parameters  $\epsilon_i$, where $U_\delta$ is defined by 
\[
U_\delta:=\left\{ f\in C^{2,\alpha}(\p M)| \, \norm{f}_{C^{2,\alpha}(\p M)}<\delta  \right\},
\]
for some sufficiently small number $\delta>0$. 
By using the implicit function theorem and the Schauder estimate for linear second order elliptic equations, one can show that the solution $u_f$ to the nonlinear equation \eqref{pf} depends smoothly (in the Frech\'et sense) on the parameters $\epsilon_1,\ldots, \eps_4$ (see  \cite[Section 2]{FO19,LLLS2019inverse} for detailed arguments).

The first linearization of the equation \eqref{pf} at the zero boundary value is
\begin{equation}\label{linear_eq}
	\begin{aligned}
		\begin{cases}
			(-\Delta_{g}+V)v^{(i)}=0 
			& \text{ in } M,
			\\
			v^{(i)}=f_i
			&\text{ on }\p M,
		\end{cases}
	\end{aligned}
\end{equation}
for $i=1,2,3,4$. Here 
\[
v^{(i)}:= \left. \p_{ \epsilon_i}\right|_{\eps=0} u_f,
\]
where  we have denoted $\eps=0$ for the case $\eps_1=\eps_2=\eps_3=\eps_4=0$.
The second linearization 
\[
w^{(ij)}:= \left. \p^2_{ \epsilon_i \epsilon_j}\right|_{\eps=0} u_f
\]
of $u_f$ satisfies the second linearized equation \eqref{linear_eq}
\begin{align}\label{w_eq}
	\begin{cases}
		(-\Delta_{g}+V)w^{(ij)}=-2qv^{(i)}v^{(j)}
		&\text{ in }M,
		\\
		w^{(ij)}= 
		f_{ij}
		&\text{ on } \p M,
	\end{cases}
\end{align}
for different $i,j=1,2,3, 4$, where the functions $v^{(i)}:=\left. \p_{\p \eps_i}\right|_{\eps =0}u_f$ are the unique solutions to the first linearized equation

We denote 
\begin{align*}
	w^{(ijk)}:=\left. \p^3_{ \eps_i  \eps_j  \eps _k}\right|_{\eps=0}u_f \quad  \text{ and }\quad w^{(1234)}:=\left.\p^4_{ \eps_1  \eps_2 \eps _3  \eps_4}\right|_{\eps=0}u_f,
\end{align*}
and see that they satisfy 
\begin{align}\label{w_eq 3rd}
	\begin{cases}
		(-\Delta_{g}+V)w^{(ijk)}=-2q\left( v^{(i)} w^{(jk)} + v^{(j)}w^{(ik)} + v^{(k)} w^{(ij)}\right) 
		&\text{ in }M,
		\\
		w^{(ijk )}= f_{ijk} 
		&\text{ on } \p M,
	\end{cases}
\end{align}
for different $i,j,k = 1,2,3,4$, and
\begin{align}\label{w_eq 4th}
	\begin{cases}
		(-\Delta_{g}+V)w^{(1234)}
		=-2q \left(  v^{(1)}w^{(234)} + v^{(2)} w^{(134)}   \right. \\		 
		\qquad \qquad \qquad \qquad  \qquad \qquad + v^{(3)} w^{(124)}+ v^{(4)}w^{(123)} \\
		\qquad \qquad \qquad \qquad  \qquad \qquad \left.  + w ^{(12)}w^{(34)}+   w^{(13)}w^{(24)} + w^{(14)} w^{(23)}  \right) 
		&\text{ in }M,
		\\
		w^{(1234)}= 0 
		&\text{ on } \p M.
	\end{cases}
\end{align}
We will construct special solutions for the above linearized equations in Section \ref{sec:CGOs}.

\subsection{Integral identities for the inverse problem}

Let us consider two potentials $q_1,q_2 \in C^{\infty}(M)$. Let $v^{(i)}$, $w_\beta^{(ij)}$, $w_\beta^{(ijk)}$ and $w_\beta^{(1234)}$ be the respective solutions of \eqref{linear_eq}, \eqref{w_eq}, \eqref{w_eq 3rd} and \eqref{w_eq 4th}, where the index $\beta=1,2$ refers to the potentials $q=q_\beta$, and $i,j,k=1,2,3,4$. We denote by $v^{(5)}$ an additional solution to the linearized equation:
	\begin{align*}
			\begin{cases}
				(-\Delta_{g}+V)v^{(5)}=0 
				& \text{ in } M,
				\\
				v^{(5)}=f_5
				&\text{ on }\p M.
			\end{cases}
		\end{align*}
We record the integral identities for the second, third and fourth order linearized equations. 

\begin{lem}[Integral identities]\label{Lem:Integral identities}
	Let $f_1,\ldots, f_5\in C^{2,\alpha}(\p M)$ and $i,j,k\in \{1,2,3,4\}$. The following integral identities hold:
	
	\noindent\textbf{(1)}
 The second order integral identity
		\begin{align}\label{second integral id}
			\begin{split}
				 \int_{\p M}   \left.\p^2_{\epsilon_i\epsilon_j} \right|_{\epsilon=0} \left( \Lambda_{q_1} f_\epsilon -\Lambda_{q_2} f_\epsilon  \right) f_k\, dS =2  \int_M (q_1-q_2) v^{(i)}v^{(j)}v^{(k)} \, dV.
			\end{split}
		\end{align}
		
\noindent\textbf{(2)} The third order integral identity
		\begin{align}\label{third integral id}
			\begin{split}
				\int_{\p M}&\left. \p^3_{\eps_i \eps_j \eps_k}\right|_{\eps=0}\left( \Lambda_{q_1}f_\eps-\Lambda_{q_2}f_\eps \right)  f_m\, dS  \\
				=& 2\int_{M} \left\{ q_1\left( v^{(i)} w_1^{(jk)} + v^{(j)}w_1^{(ik)} + v^{(k)} w_1^{(ij)}\right)  \right. \\
				& \quad  \qquad \left. - q_2\left( v^{(i)} w_2^{(jk)} + v^{(j)}w_2^{(ik)} + v^{(k)} w_2^{(ij)}\right)  \right\} v^{(l)}\, dV.
			\end{split}
		\end{align}
		
\noindent\textbf{(3)} The fourth order integral identity
		\begin{align}\label{fourth integral id}
			\begin{split}
				\int_{\p M}& \left.\p^4 _{\eps_1 \eps_2 \eps_3\eps_4}\right|_{\epsilon=0} \left(   \Lambda_{q_1} f_\epsilon- \Lambda_{q_2} f_\epsilon \right) f_5\, dS \\
				= & 2 \int_M \Bigg\{ q_1 \left(  v^{(1)}w_1^{(234)} + v^{(2)} w_1^{(134)}  + v^{(3)} w_1^{(124)}+ v^{(4)}w_1^{(123)}  \right.  \\
				&\qquad\qquad \qquad \qquad \qquad  \qquad  \left.  + w_1^{(12)}w_1^{(34)}+   w_1^{(13)}w_1^{(24)} + w_1^{(14)} w_1^{(23)}  \right) \\
				& \ \quad  \qquad - q_2 \left(  v^{(1)}w_2^{(234)} + v^{(2)} w_2^{(134)}    + v^{(3)} w_2^{(124)}+ v^{(4)}w_2^{(123)}\right. \\
				& \left.\qquad \qquad \qquad \qquad \qquad \qquad    + w _2^{(12)}w_2^{(34)}+   w_2^{(13)}w_2^{(24)} + w_2^{(14)} w_2^{(23)}  \right)  \Bigg\} v^{(5)}\, dV,
			\end{split}
		\end{align}

\end{lem}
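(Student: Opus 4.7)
I would deduce all three identities from a single Green-type identity that I then differentiate with respect to $\epsilon$. Let $u_\beta^\epsilon$ denote the small solution of \eqref{pf} with potential $q=q_\beta$ and Dirichlet data $f_\epsilon$ for $\beta=1,2$. Since $0$ is not a Dirichlet eigenvalue of $-\Delta_g+V$, the zero data produces the zero solution, $u_\beta^0 \equiv 0$, and the implicit function theorem argument recalled from \cite{FO19,LLLS2019inverse} guarantees that $\epsilon \mapsto u_\beta^\epsilon$ is smooth into $C^{2,\alpha}(M)$; hence differentiation in $\epsilon$ commutes with boundary and bulk integrals throughout.

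The first step is to pair $u_\beta^\epsilon$ with a test solution. For the linear problem $(-\Delta_g+V)v^{(m)}=0$ with $v^{(m)}|_{\partial M}=f_m$, Green's formula applied to $u_\beta^\epsilon$ and $v^{(m)}$ together with the equations they satisfy yields
\begin{equation*}
\int_{\partial M} \Lambda_{q_\beta}(f_\epsilon)\,f_m\, dS \;=\; \int_M q_\beta\, (u_\beta^\epsilon)^2\, v^{(m)}\, dV \;+\; \int_{\partial M} f_\epsilon\, \partial_\nu v^{(m)}\, dS.
\end{equation*}
The last boundary term depends on the data and on $V$ only, not on $\beta$, so subtracting the $\beta=1$ and $\beta=2$ versions yields the master identity
\begin{equation*}
\int_{\partial M} (\Lambda_{q_1}-\Lambda_{q_2})(f_\epsilon)\,f_m\, dS \;=\; \int_M \bigl[q_1 (u_1^\epsilon)^2 - q_2 (u_2^\epsilon)^2\bigr]\,v^{(m)}\, dV.
\end{equation*}

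The second step is to differentiate this master identity in $\epsilon$ at $\epsilon=0$. From the defining equations \eqref{linear_eq}--\eqref{w_eq 4th} one has $\partial_{\epsilon_i}|_0 u_\beta^\epsilon = v^{(i)}$ (independent of $\beta$, since the first linearized equation involves only $V$), $\partial^2_{\epsilon_i\epsilon_j}|_0 u_\beta^\epsilon = w_\beta^{(ij)}$, and analogously at the third and fourth order. Applying the generalized Leibniz rule to $(u_\beta^\epsilon)^2$ and discarding every term still containing the factor $u_\beta^0 = 0$ gives
\begin{align*}
\partial^2_{\epsilon_i\epsilon_j}\big|_0 (u_\beta^\epsilon)^2 &= 2\, v^{(i)} v^{(j)},\\
\partial^3_{\epsilon_i\epsilon_j\epsilon_k}\big|_0 (u_\beta^\epsilon)^2 &= 2\bigl(v^{(i)} w_\beta^{(jk)} + v^{(j)} w_\beta^{(ik)} + v^{(k)} w_\beta^{(ij)}\bigr),\\
\partial^4_{\epsilon_1\epsilon_2\epsilon_3\epsilon_4}\big|_0 (u_\beta^\epsilon)^2 &= 2\sum_{i=1}^{4} v^{(i)} w_\beta^{(jkl)} + 2\bigl(w_\beta^{(12)} w_\beta^{(34)} + w_\beta^{(13)} w_\beta^{(24)} + w_\beta^{(14)} w_\beta^{(23)}\bigr),
\end{align*}
where in the final line $(j,k,l)$ denotes the complementary triple to $\{i\}$ in $\{1,2,3,4\}$. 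Inserting each expansion into the master identity, with the test index $m$ replaced by $k$, $l$, or $5$ respectively, produces \eqref{second integral id}, \eqref{third integral id}, and \eqref{fourth integral id}.

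There is no genuine obstacle in the argument; it is a standard consequence of Green's identity combined with Leibniz bookkeeping. The only point worth being careful about is the quadratic and cubic corrections $f_{ij}$, $f_{ijk}$ in the ansatz \eqref{f_epsilon}: their $\epsilon$-derivatives produce extra boundary contributions on the left-hand side, but these contributions are identical for $q_1$ and $q_2$ and hence cancel in the difference $\Lambda_{q_1}-\Lambda_{q_2}$, leaving the identities in the stated form.
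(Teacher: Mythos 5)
Your argument is correct, and it follows a genuinely different route from the paper. The paper proves each identity separately by applying Green's identity at the level of the linearized equations: it integrates by parts in $\int_{\p M}(\p_\nu w_1^{(\cdot)} - \p_\nu w_2^{(\cdot)})f_m\,dS$, uses the equations \eqref{w_eq}--\eqref{w_eq 4th} to substitute for the Laplacians, exploits the fact that $w_1^{(\cdot)}$ and $w_2^{(\cdot)}$ share boundary data so the boundary remainders cancel, and repeats this bookkeeping at each order. You instead pair the full nonlinear solution $u_\beta^\epsilon$ against the fixed test function $v^{(m)}$ once, observe that the potential-independent boundary term drops out when taking the difference, and then extract all three identities simultaneously by applying $\p^k_\epsilon|_0$ to the resulting master identity and Leibniz to $(u_\beta^\epsilon)^2$, with $u_\beta^0\equiv 0$ killing all terms carrying an undifferentiated factor of $u$. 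This has the advantage of isolating the single integration by parts and making it transparent where the combinatorial factors in \eqref{second integral id}--\eqref{fourth integral id} come from (they are exactly the Leibniz coefficients of $\p^k(u^2)$); the paper's order-by-order computation buys the reader more explicit contact with the linearized boundary value problems. Your closing remark about the quadratic and cubic corrections $f_{ij}$, $f_{ijk}$ in the ansatz \eqref{f_epsilon} is exactly the right cautionary note: they do contribute to $\p^k_\epsilon|_0 \Lambda_{q_\beta}(f_\epsilon)$ individually, but they enter through $\p_\nu v^{(m)}$, which is the same for $\beta=1,2$, and hence vanish in the difference $\Lambda_{q_1}-\Lambda_{q_2}$. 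Both the master identity and the Leibniz computations you wrote out are accurate (one can verify, for instance, that your fourth-order expansion contains precisely the seven terms $v^{(i)}w_\beta^{(jkl)}$ and $w_\beta^{(ij)}w_\beta^{(kl)}$ appearing in \eqref{fourth integral id}).
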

\begin{proof}
	The proof is based on integration by parts. We only prove \eqref{second integral id} explicitly. The other two integral identities follow similarly. 
	
	\medskip
	
\noindent\textbf{(1)} Let us consider the second linearized equation \eqref{w_eq} with $q=q_\beta$ for $\beta=1,2$. Integrating by parts yields
		\begin{align*}
			&\int_{\p M} \left.\p^2_{\eps_i \eps_j }\right|_{\eps=0}\left( \Lambda_{q_1}f_\eps-\Lambda_{q_2}f_\eps \right)  f_k \, dS \\
			= &\int_{\p M} \left( \p_\nu w_1^{(ij)}-\p_\nu w_2^{(ij)} \right) f_k \, dS \\
			=& \int_{M} \left(\Delta w_1^{(ij)} -\Delta w_2^{(ij)} \right) v^{(k)} \, dV + \int_{M} \nabla \left( w_1^{(ij)} - w_2^{(ij)} \right) \cdot \nabla  v^{(k)} \, dV  \\
			=& \int_{M} V\left(  w_1^{(ij)} - w_2^{(ij)} \right) v^{(k)} \, dV  +2\int_M (q_1-q_2)v^{(i)}v^{(j)}v^{(k)}\, dV\\
			&\qquad   - \int_{M} \left( w_1^{(ij)}-w_2^{(ij)}\right) \Delta v^{(k)}\, dV \\
			=&2\int_M (q_1-q_2)v^{(i)}v^{(j)}v^{(k)}\, dV,
		\end{align*}
		where we have utilized $w_1^{(ij)}=f_{ij}=w_2^{(ij)}$ on $\p M$ and $(-\Delta_g+V) v^{(k)}=0$ in $M$. 
		
		\medskip	
		
\noindent\textbf{(2)} We have 	
		\begin{align*}
			\int_{\p M} \left. \p^3_{\eps_i \eps_j \eps_k}\right|_{\eps=0}\left( \Lambda_{q_1}f_\eps-\Lambda_{q_2}f_\eps \right)  f_l\, dS 
			= \int_{\p M} \left( \p_\nu w_1^{(ijk)}-\p_\nu w_2^{(ijk)} \right) f_l\, dS.
		\end{align*}
		The above integration by parts combined with the equations \eqref{linear_eq} and \eqref{w_eq 3rd} results in the claimed identity. Proof of \textbf{(3)} is obtained similarly.
%
	\end{proof}

\section{Complex geometrical optics and Gaussian beam quasimodes}\label{sec:CGOs}

Let us introduce the \emph{complex geometric optics} type solutions for the first order linearized equation. These are solutions to the linearized equation \eqref{linear_eq} that concentrate on planes of the form $I\times \gamma$, where $I$ is an interval and $\gamma$ is an inextendible non-tangential geodesic on $M_0$. We call them CGOs in short. We also assume in this paper for simplicity that $\gamma$ does not have self-intersections.

We recall the Gaussian quasimode construction for the equation \eqref{linear_eq} that originated from \cite[Section 3]{ferreira2013calderon} in the setting of CTA manifolds.  We follow the constructions \cite[Section 4.1, Proposition 5.1]{FO19} and \cite[Section 5 and Appendix]{LLLS2019inverse} that allow a zeroth order term $V$ in \eqref{linear_eq} as well as providing decay estimates in higher order Sobolev spaces. We refer to these works for details of the constructions in this section.

 Throughout the remainder of this paper and for the sake of brevity of notation, we write $\Delta\equiv \Delta_g$ and $\nabla\equiv \nabla _g$. We also denote by $\nabla'$ the gradient operator in the transversal variables on $M_0$. We first consider a unit speed non-tangential geodesic $\gamma:[l_1,l_2]\to M_0$ that connects two points on the boundary $\p M_0$. We assume that $\gamma$ does not have self-intersections for simplicity. We write $(\hat{M}_0,g_0)$ for an artificial smooth extension of $(M_0,g_0)$ into a slightly larger smooth Riemannian manifold and denote by $(t,y)$ the Fermi coordinates in a tubular neighborhood of the geodesic $\gamma$, where $t\in [l_1,l_2]$ and $y\in B_{\delta'}(\R^{n-2})$ for some $\delta'>0$ sufficiently small. We refer the reader to \cite[Section 3]{ferreira2013calderon} for the details of the construction of Fermi coordinates. 

We define the complex parameter
$$ s= \tau+\mathbf{i}\lambda, \quad \tau>0,\quad\lambda \in \R,$$
where $\mathbf{i}=\sqrt{-1}$, $\lambda$ is to be viewed as a fixed parameter, and $\tau>0$ is an asymptotic parameter that tends to infinity. Given any $K>0$ and $N>0$, there exists a positive integer $N'$ depending on $K, N$ (see \eqref{N'_choice} for the precise choice) and solutions $v_s$ to the linear equation
$$(-\Delta + V)v_s=0 \quad \text{in $M$},$$
of the form
\begin{align}\label{form_1}
	v_{s}(x_1,t,y)&= e^{\pm s x_1} \left(\tau^{\frac{n-2}{8}}e^{\mathbf{i}s \psi(t,y)} a_s(x_1,t,y) + r_s(x_1,t,y) \right),
\end{align}
where  $\chi$ is a cutoff function supported in a $\delta'$-neighborhood of the origin and each term in the right hand side has certain properties that we will describe next. Alternatively, we denote $v_s$, $a_s$ and $r_s$ by $v_\tau$, $a_\tau$ and $r_\tau$ respectively.

The phase function $\psi(t,y)$ satisfies 
\begin{align} \label{phase_pr} 
\psi(\gamma(t))=t,\quad \nabla\psi(\gamma(t))= \dot{\gamma}(t),\quad \mathrm{Im}( D^2\psi(\gamma(t))) \geq 0,\quad \mathrm{Im}( D^2\psi|_{\dot{\gamma}(t)^\perp}) >0.
\end{align}
More explicitly, in terms of the Fermi coordinates we can write
$$\psi(t,y)= t+ \frac{1}{2}\sum_{j,k=1}^{n-2} H_{jk}(t)y_jy_t+O(|y|^3),$$
where the complex-valued symmetric matrix $H(t)= (H_{jk}(t))_{j,k=1}^{n-2}$ is given by the expression
$$H(t)= \dot{Y}(t)Y^{-1}(t),\quad \text{ for any }\,  t\in [l_1,l_2],$$
and $Y$ is a non-degenerate matrix that solves the second order linear differential equation
$$ \ddot{Y} + D Y=0\quad \text{ for any } \, t\in [l_1,l_2].$$
Here, the symmetric matrix $D$ is given by $D_{jk}=\frac{1}{2}\p^2_{jk}g^{11}$ for each $j,k=1,\ldots,n-2$. The matrix $H$ additionally satisfies 
$$ \textrm{Im}(H)(t)>0\quad\text{ for any } \, t\in [l_1,l_2],$$
and
$$ \det (\textrm{Im}(H(t)))\cdot |\det Y(t)|^2 = 1.$$

Next we describe the amplitude function in the expansion \eqref{form_1}. The amplitude $a_s(x_1,t,y)$ is of the form 
\begin{equation}\label{amplitude}
a_s(x_1,t,y) = \left( a_{0}(t,y) +\frac{a_1^{\pm}(x_1,t,y)}{s}+\cdots+\frac{a^{\pm}_{N'-1}(x_1,t,y)}{s^{N'-1}}\right)\chi\left(\frac{|y|}{\delta'}\right),
\end{equation}
where the principal amplitude $a_{0}(t,y)$ itself is given by the expression
$$ a_{0}(t,y)=a_{0,0}(t) +a_{0,1}(t,y)+\cdots+a_{0,N'-1}(t,y).$$
Here, $a_{0,0}(t)$ is an explicit positive function on $\gamma$ given by the expression
\begin{equation}
\label{ampl_0_pr}
a_{0,0}(t) = (\det Y(t))^{-\frac{1}{2}},
\end{equation}
and the subsequent terms $a_{0,j}(t,y)$ with $j=1,2,\ldots,N'-1$ are  homogeneous polynomials of degree $j$ in the $y$-coordinates. These terms arise as solutions to certain transport equations along the geodesic $\gamma$ on $M_0$.

The remaining amplitude terms $a_{k}^{\pm}$, for $k=1,2,\ldots,N'-1$, have analogous expressions of the form
$$ a_{k}^{\pm}(x_1,t,y)=a^{\pm}_{k,0}(x_1,t,y) +a_{k,1}^{\pm}(x_1,t,y)+\cdots+a^{\pm}_{k,N'-1}(t,y),$$
where $a_{k,j}^{\pm}$ are homogeneous polynomials of degree $j$ in the $y$-coordinates, for $j=0,1,\ldots, N'-1$. These amplitudes arise as solutions to certain complex transport equations on the plane $y=0$ on $M$ (see \cite[Section 4]{FO19} for more details). 

Finally, using \cite[Proposition 2, Lemma 4]{FO19} and fixing the order
\begin{equation}
\label{N'_choice}
N'=2+2N+2K,
\end{equation}
for the Gaussian quasimode construction, it follows that the remainder term in \eqref{form_1} satisfies the decay estimate
\begin{equation}
\label{gauss_bounds}
\norm{r_{s}}_{H^K(M)} \lesssim \tau^{-N}.
\end{equation}

\section{Solutions for the linearized equations}\label{Section 4}
We discussed CGO solutions for the first order linearization of the equation $(-\Delta  +V)u+qu^2=0$ at the zero solution \label{Sec:Gauss_interac} in the previous section. In this section, we construct solutions for the second and third order linearizations of the equation.

\subsection{Solutions for the second order linearization}
In what follows, we assume that geodesics do not have self-intersections.  Let $p_0 \in M_0$ and let $\gamma_1$ be a non-tangential geodesic passing through $p_0$ in some direction $v \in S_{p_0}M_0$. Here $S_{p_0}M_0$ stands for unit length vectors of $T_{p_0}M_0$. We will use the following definition.
\begin{defi}
   We say that two geodesics intersect \emph{properly} if they intersect and are not reparametrizations of each other.
\end{defi} 

Assume that $\gamma_2$ is another nontangential geodesic, and that it intersects $\gamma_1$ properly at $p_0$. If $v' \in S_pM_0$ is the velocity vector of $\gamma_2$ at $p_0$, then $v'$ is linearly independent of $v$ due to the uniqueness of geodesics. 
Due to a compactness argument (see e.g. \cite{LLLS2019inverse}), the geodesics $\gamma_1$ and $\gamma_2$ can only intersect at a finite number of points. 

We consider CGO solutions $v^{(1)}=v^{(1)}_\tau$ and  $v^{(2)}=v^{(2)}_\tau$ to the equation \eqref{linear_eq} corresponding to geodesics $\gamma_1$ and $\gamma_2$, respectively. That is, the CGOs $v^{(k)}$ for $k=1,2$ are of the form \eqref{form_1}:
\begin{equation}\label{cgo_sol_1}
\begin{aligned}
v^{(k)} &= e^{\pm s_k x_1}\left( \tau^{\frac{n-2}{8}} e^{\mathbf{i}s_k \psi_k} a^{(k)}_\tau + r_\tau^{(k)}\right),
\end{aligned}
\end{equation}
where $\psi_k$, $a_\tau^{(k)}$ and $r_\tau^{(k)}$ have the properties described in the previous section. 
We have also denoted 
\begin{align}\label{notation convention}
	s_k=c_k\tau+\mathbf{i}\lambda_k,
\end{align}
where $c_k,\lambda_k\in \R$, and $\tau>0$ is a (large) parameter. 

 In the next lemma, we construct solutions for the second linearized equation. After proving the lemma, in Proposition \ref{2nd_lin_sol_fixed_bndr}, we show that if the DN map is known, the boundary value of the solutions can be fixed.

\begin{lem}\label{2nd_lin_sol_free_bndr}
 Let $K,N\in \N\cup \{0\}$. Assume that $v^{(1)}$ and $v^{(2)}$ are CGOs, which correspond to properly intersecting geodesics on $M_0$ and are of the form \eqref{cgo_sol_1}. If the restrictions of the amplitudes $a^{(k)}$ to $M_0$ are supported in small enough neighborhoods of the geodesics $\gamma_k$ for $k=1,2$, and $N'=N'(K,N)$ is large enough, then the equation 
\begin{align}\label{eq:2nd_lin_sol_free_bndr}
(-\Delta_{g}+V)w=-2qv^{(1)}v^{(2)}
&\text{ in }M
    \end{align}
has a smooth solution $w$ up to the boundary $\p M$ with the following properties: The solution $w$ is of the form
\[
 w=w_0+e^{\tau \Psi}R,
\]
where
\[
	w_0=\tau^{\frac{n-2}{4}}e^{(\pm s_1\pm s_2) x_1+\mathbf{i}(s_1 \psi_1+s_1 \psi_1)}b_\tau,
	\]
    with
	\begin{align}\label{expansion_formula_new}
    \begin{split}
		b_\tau&=\frac{1}{\tau^2}b_{-2}+\frac{1}{\tau^3}b_{-3} +\cdots + \frac{1}{\tau^{2N'}}b_{-2N'},
		 \\
	b_{-2}&=\frac{2q}{(\pm c_1\pm c_2)^2-\abs{c_1\nabla'\psi_1+c_2\nabla'\psi_2}^2}a_0^{(1)}a_0^{(2)}.
	\end{split}
	\end{align}
	The function $\Psi$ is given by
	\begin{equation}\label{eq:psi_def}
	 \Psi=(\pm c_1\pm c_2)x_1+\mathbf{i}c_1 \psi_1+\mathbf{i}c_2 \psi_2.
	\end{equation}
    and $R=R_\tau$ is a remainder term that satisfies
	\[
\norm{R_\tau}_{H^{K}(M)}\lesssim \tau^{-N}.
	\]
\end{lem}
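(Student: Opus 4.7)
The plan is a semiclassical WKB construction of a leading part $w_0$, combined with a Carleman-estimate based solvability step for the exponentially weighted remainder $e^{\tau\Psi}R$. First, I would plug \eqref{cgo_sol_1} into $-2qv^{(1)}v^{(2)}$: the principal piece factorises as
\begin{equation*}
-2q\,\tau^{(n-2)/4}\,e^{(\pm s_1\pm s_2)x_1+\mathbf{i}(s_1\psi_1+s_2\psi_2)}\,a^{(1)}_\tau a^{(2)}_\tau,
\end{equation*}
while every cross term containing at least one Gaussian-beam remainder $r_\tau^{(k)}$ is $O(\tau^{-N})$ in $H^K(M)$ by \eqref{gauss_bounds} together with the Sobolev multiplication property (provided $N'$ is taken large enough to absorb the derivative losses appearing below). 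This motivates the ansatz $w_0=e^{\tau\Psi}\tilde b_\tau$, where $\tilde b_\tau$ collects the prefactor $\tau^{(n-2)/4}$, the purely imaginary phase $e^{\mathbf{i}(\pm\lambda_1\pm\lambda_2)x_1-\lambda_1\psi_1-\lambda_2\psi_2}$, and the formal series $b_\tau=\tau^{-2}b_{-2}+\cdots+\tau^{-2N'}b_{-2N'}$.

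Next, I would conjugate $-\Delta+V$ against $e^{\tau\Psi}$ and match powers of $\tau$. Using $g=dx_1^2+g_0$ and \eqref{eq:psi_def}, one computes
\begin{equation*}
|\nabla\Psi|_g^{\,2}=(\pm c_1\pm c_2)^2-|c_1\nabla'\psi_1+c_2\nabla'\psi_2|_{g_0}^{\,2}.
\end{equation*}
The key observation is that this symbol is uniformly nonvanishing on the joint support of $a_0^{(1)}a_0^{(2)}$: away from the (finitely many) intersection points of $\gamma_1,\gamma_2$ one of the amplitudes vanishes, while at each intersection point the proper-intersection hypothesis makes $\nabla'\psi_1=\dot\gamma_1$ and $\nabla'\psi_2=\dot\gamma_2$ linearly independent, so after shrinking the transversal amplitude supports this denominator is uniformly bounded away from zero. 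Matching the highest $\tau$-power then yields the algebraic eikonal identity $|\nabla\Psi|_g^{\,2}\,b_{-2}=2q\,a_0^{(1)}a_0^{(2)}$, i.e.\ the formula claimed in \eqref{expansion_formula_new}; matching each subsequent $\tau^{-1}$-power defines $b_{-3},\ldots,b_{-2N'}$ recursively, each by purely algebraic division by the same nonvanishing symbol (no transport ODE needs to be integrated along geodesics, which is a simplification over the construction of the $v^{(k)}$), and all these coefficients remain smooth and transversally compactly supported.

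Setting $w=w_0+e^{\tau\Psi}R$, the remainder obeys a conjugated equation $e^{-\tau\Psi}(-\Delta+V)(e^{\tau\Psi}R)=F_\tau$, where $F_\tau$ collects the truncation error from stopping the expansion at $b_{-2N'}$ together with the cross terms involving some $r_\tau^{(k)}$; by the estimates above, $\norm{F_\tau}_{H^{K_1}(M)}\lesssim\tau^{-N}$ for a suitable $K_1$, once $N'$ is fixed large enough relative to $N$ and $K$. Solvability with the desired bound $\norm{R}_{H^K(M)}\lesssim\tau^{-N}$ then follows from the Carleman estimate with boundary terms proved in the appendix, applied to the weight $\re(\Psi)=(\pm c_1\pm c_2)x_1-c_1\im(\psi_1)-c_2\im(\psi_2)$, combined with a standard elliptic regularity bootstrap to lift the base $L^2$ Carleman gain up to $H^K$. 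The main obstacle I foresee is precisely this final Carleman step: $\re(\Psi)$ is not one of the linear limiting Carleman weights ordinarily exploited on CTA manifolds (it carries a transverse Gaussian contribution from $\im(\psi_k)$), so verifying a pseudoconvexity-type condition uniformly in $\tau$ (and doing so while leaving the Dirichlet trace of $R$ free, which is needed so that the boundary trace of $w$ can be prescribed by the subsequent proposition) is the technical heart of the proof.
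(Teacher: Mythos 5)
Your proposal follows the paper's route quite closely: plug the CGOs into the source, make a WKB ansatz $w_0=e^{\tau\Psi}\tilde b_\tau$, exploit the proper-intersection hypothesis to get $\langle\nabla\Psi,\nabla\Psi\rangle\neq 0$ on the relevant support, define the $b_{-k}$ by algebraic division (the paper's recursion \eqref{transport_equation_new} indeed needs no transport integration), and solve for the remainder with a Carleman estimate. The two places where you depart from, or fall short of, the paper's argument are worth noting.

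First, you never address the case $\pm c_1\pm c_2=0$, which is not vacuous (it occurs for natural choices of the $c_k$), and in that case the slope of the linear part of $\Psi$ in the $x_1$-direction vanishes, so neither the Carleman estimate from \cite{DosSantosFerreira2009} nor the boundary-term Carleman estimate Lemma~\ref{lem0} has anything to gain from $\tau\to\infty$. The paper handles this degenerate case separately by imposing zero Dirichlet data on $\hat R_\tau$ and using ordinary elliptic estimates (the source is already $\mathcal{O}(\tau^{-N})$ in suitable norms, so no $\tau$-gain from a Carleman weight is required). Your write-up should make this dichotomy explicit.

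Second, you invoke the boundary-term Carleman estimate of the appendix at this stage, whereas the paper deliberately reserves that estimate for Proposition~\ref{2nd_lin_sol_fixed_bndr}, where the boundary trace of $R$ is pinned down by $\Lambda_{q_1}=\Lambda_{q_2}$ and boundary determination. In the present lemma the boundary trace of $w$ is \emph{free}, so the paper simply applies the (interior, shifted-weight) Carleman estimate of \cite{DosSantosFerreira2009} with the genuine limiting Carleman weight $\varphi(x)=(\pm c_1\pm c_2)x_1$; this also answers the worry you raise at the end. One does not need, and should not try to verify, pseudoconvexity for $\mathrm{Re}\,\Psi$ itself: the decomposition $e^{\tau\Psi}=e^{\tau(\pm c_1\pm c_2)x_1}\,e^{\tau\mathbf{i}(c_1\psi_1+c_2\psi_2)}$ exhibits the weight as the linear LCW $\varphi$ times a factor of modulus $e^{-\tau(c_1\mathrm{Im}\,\psi_1+c_2\mathrm{Im}\,\psi_2)}\le 1$ (using $\mathrm{Im}\,\psi_k\ge 0$ from \eqref{phase_pr}), so the non-linear Gaussian part of the weight is harmless on the source side and gets absorbed into the definition of $R_\tau=e^{-\tau\Psi}\hat R_\tau$. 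So the ``technical heart'' you flag is actually not present in the paper's argument; what is missing from your proposal is the $\pm c_1\pm c_2=0$ branch.
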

\begin{proof}
We first find an approximate solution for the equation 
	\begin{align}\label{second linearized in WKB_2}
			(-\Delta +V) \hat w_0= -2qV^{(1)}_\tau V^{(2)}_\tau&\text{ in } M,
	\end{align}
	where
	     \begin{align*}
     	V^{(k)}_\tau=&e^{\pm s_k x_1}e^{\mathbf{i}s_k \psi_k} a^{(k)}_\tau. 
     \end{align*}
	After that, we scale $\hat w_0$ and correct it by using either Carleman or elliptic estimates to a solution of \eqref{eq:2nd_lin_sol_free_bndr}. Here $\psi_k$ and $a^{(k)}$ are constructed with respect to geodesics $\gamma_k$ that intersect properly on the transversal manifold $M_0$.
	
	We shorthand our notation and write
	\[
	 e^{\pm s_1 x_1}e^{\mathbf{i}s_1 \psi_1}e^{\pm s_2 x_1}e^{\mathbf{i}s_2 \psi_2}:=e^{\tau \Psi}e^{\Lambda},
	\]
	where $\Psi$ is as in \eqref{eq:psi_def} and 
	\[
	 \Lambda=\mathbf{i}(\pm \lambda_1\pm \lambda_2)x_1-\lambda_1 \psi_1-\lambda_2 \psi_2.
	\]
Using the expressions for the amplitude functions \eqref{amplitude}, the equation \eqref{second linearized in WKB_2} can be written as
	\[ 
	(\Delta_g-V)\hat w_0= e^{\tau \Psi} e^\Lambda\sum_{k=0}^{2(N'-1)} \frac{E_{-k}}{\tau^k},
	 \]
	where the functions $E_{-k}\in C^\infty(M)$, $k=0,1,\ldots, 2(N'-1)$, are supported near the intersection points of the geodesics $\gamma_1$ and $\gamma_2$. We have 
	\[
	E_0=2qa_0^{(1)}a_0^{(2)}.
	\]
	
	Let us consider a \emph{WKB ansatz} for $\hat w_0$ of the form
\[
e^{\tau\Psi}\hat b. 
\]
A direct calculation shows that 
\begin{align*}
		&(\Delta-V)\left(e^{\tau\Psi}\hat b\right)=e^{\tau\Psi}\left(\tau^2\langle \nabla \Psi, \nabla \Psi\rangle \hat b+\tau[2\langle \nabla \hat b,\nabla \Psi\rangle +\hat b(\Delta \Psi)]+(\Delta-V) \hat b\right), \nonumber
\end{align*}
where $\langle \eta , \zeta \rangle$ denotes the complexified Riemannian inner product. At the center of normal coordinates $\langle \eta , \zeta \rangle=\eta \cdot \zeta=\displaystyle\sum_{i=1}^n \eta_i \zeta_i$, for any $\eta, \zeta\in \C^n$. Note that $\langle\ccdot,\ccdot\rangle$ is not a Hermitean inner product of complex vectors. Especially $\langle\eta,\eta\rangle=0$ does not imply $\eta=0$.
We assume that $\hat b_\tau$ is an amplitude function of the form 
\begin{align}\label{form of b in s}
	\hat b_\tau=\frac{1}{\tau^2}\hat b_{-2}+\frac{1}{\tau^3}\hat b_{-3}+\cdots+\frac{1}{\tau^N}\hat b_{-2N'}. 
\end{align}

At an intersection point of the geodesics, we have by the properties of Gaussian beams (see \eqref{phase_pr}) that
\[
\nabla\Psi=(\pm c_1\pm c_2)e_1 + \mathbf{i}c_1\nabla' \psi_1+\mathbf{i}c_2 \nabla'\psi_2=(\pm c_1\pm c_2)e_1 + \mathbf{i}(c_1 \dot{\gamma}_1+c_2 \dot{\gamma}_2),
\]
where $\dot{\gamma}_1$ and $\dot{\gamma}_2$ are the velocity vectors of $\gamma_1$ and $\gamma_2$ at the intersection point. Here $e_1=\p_{x_1}x$, for $x=(x_1,\ldots, x_n)$. Since the geodesics $\gamma_1$ and $\gamma_2$ intersect properly 
\begin{align*}
 \langle \nabla \Psi, \nabla \Psi\rangle&=(\pm c_1\pm c_2)^2-\abs{c_1\nabla' \psi_1+c_2\nabla' \psi_2}^2 \\
 &=c_1^2\pm 2c_1c_2+  c_2^2-c_1^2-2c_1c_2\langle \dot{\gamma}_1, \dot{\gamma}_2\rangle-c_2^2 \\
 &=-2c_1c_2 \left( \langle \dot{\gamma}_1, \dot{\gamma}_2\rangle\mp 1 \right)\neq 0
\end{align*}
at the intersection points of the geodesics. 
By the above and assuming that $a^{(1)}$ and $a^{(2)}$ are supported in small enough neighborhoods of $\gamma_1$ and $\gamma_2$ we have
\begin{equation}\label{eq:division_by_Psi_ok}
|\langle \nabla \Psi, \nabla \Psi\rangle| \geq \text{ constant } > 0 
\end{equation}
on the support of each $E_{-k}$ for all $k=0,1,\ldots,2(N'-1)$.

	Let us set  
	$
	\hat b_0=\hat b_{-1}=0
	$ 
	and define the coefficients $\hat b_{-k}$ for $k=2,\ldots, 2N'$ recursively by the formula 
	\begin{align}\label{transport_equation_new}
	 \begin{split}
	 		\hat b_{-k}= \frac{e^\Lambda E_{-k+2}-[2\langle\nabla \hat b_{-k+1}, \nabla \Psi\rangle+\hat b_{-k+1}\Delta\Psi]-(\Delta-V)\hat b_{-k+2}}{\langle \nabla \Psi, \nabla \Psi\rangle}.
	 \end{split}
	\end{align}
Especially, 
\begin{align*}
\hat b_{-2}=e^{\Lambda}\frac{2q}{\langle \nabla \Psi, \nabla \Psi\rangle }  \,a_0^{(1)} a_0^{(2)}.
\end{align*}
We also see by a recursive inspection that $\hat b_k$ is supported on the set where \eqref{eq:division_by_Psi_ok} holds. Thus $\hat b_k$ are well-defined.
It follows by reindexing sums and using $\hat b_{-1}=\hat b_0=0$, such that 
\begin{align}\label{eq:telescopic_sum}
   	 &(\Delta-V)\left(e^{\tau\Psi}\hat b_\tau\right)- 2qV^{(1)}V^{(2)} \nonumber\\ 
   	 =&e^{\tau\Psi}\sum_{k=2}^{2N'}\left[\tau^{2-k}\langle \nabla \Psi, \nabla \Psi\rangle \hat b_{-k}+\tau^{1-k}[2\langle \nabla \hat b_{-k},\nabla \Psi\rangle +\hat b_{-k}(\Delta \Psi)] \right. \nonumber &\\
   	 & \qquad \qquad \left.+\tau^{-k}(\Delta-V) \hat b_{-k}- e^\Lambda E_{-k+2}\right] \nonumber\\
   	 =&e^{\tau\Psi}\sum_{k=2}^{2N'}\frac{1}{\tau^{k-2}}\left[\langle \nabla \Psi, \nabla \Psi\rangle \hat b_{-k}+[2\langle \nabla \hat b_{-k+1},\nabla \Psi\rangle + \hat b_{-k+1}(\Delta \Psi)] \right.\nonumber \\
   	 &\qquad \qquad \left.   +(\Delta-V)\hat b_{-k+2}- e^\Lambda E_{-k+2}\right]\nonumber\\
   	 &+e^{\tau\Psi}\Big(\tau^{-2N'+1} \left([2\langle \nabla \hat b_{-2N'},\nabla \Psi\rangle + \hat b_{-2N'}(\Delta \Psi)]+(\Delta-V) \hat b_{-2N'+1}\right)\nonumber \\
   	 &\qquad\qquad\qquad\qquad\qquad\qquad\qquad+\tau^{-2N'}(\Delta-V) \hat b_{-2N'}\Big)\nonumber\\
   	 =&e^{\tau\Psi}\mathcal{O}_{H^{\ell}M)}\left(\tau^{-2N'+1}\right),
\end{align}
since all the terms of the first sum after the second to last equality are zero by \eqref{transport_equation_new}. Here $\ell$ can be taken to be any number $0,1,2,\ldots$, since $\Psi$ and $\hat b_{-k}$ are smooth. 

Next we scale and correct $e^{\tau\Psi}\hat b_\tau$ so that it solves \eqref{eq:2nd_lin_sol_free_bndr}. We write 
\[
w= \tau^{\frac{n-2}{4}}e^{\tau\Psi}\hat b_\tau + \hat{R}_\tau.
\] 
Note that 
\[
 qv^{(1)}v^{(2)}=q\tau^{\frac{n-2}{4}}V^{(1)}V^{(2)}+qe^{\tau\Psi}r,
\]
where $r$ corresponds to the correction terms $r^{(1)}$ and $r^{(2)}$ given by 
\begin{align*}
 r&=r_\tau^{(1)}\tau^{\frac{n-2}{4}}e^{\mathbf{i}s_2\psi_2}a_{\tau}^{(2)}+r_\tau^{(2)}\tau^{\frac{n-2}{4}}e^{\mathbf{i}s_1\psi_1}a_{\tau}^{(1)}+r_\tau^{(1)}r_\tau^{(2)}.
\end{align*}
Hence, $\hat{R}_\tau$ solves
\begin{equation}\label{eq:equation_for_hatR}
 (\Delta-V)\hat{R}_\tau=-\left[(\Delta -V)\tau^{\frac{n-2}{4}}e^{\tau\Psi}\hat b_\tau- 2\tau^{\frac{n-2}{4}}qV^{(1)}_\tau V^{(2)}_\tau\right]+2qe^{\tau\Psi}r,
\end{equation}
whenever $w$ solves \eqref{eq:2nd_lin_sol_free_bndr}.
Now, if $N'$ is chosen large enough, i.e.
$$N' \geq 2+2N+4K,$$
then we have $r=\mathcal{O}_{H^K(M)}(\tau^{-N})$ by combining the bounds \eqref{gauss_bounds} for the correction terms $r_\tau^{(\beta)}$, $\beta =1,2$ together with the bounds
\begin{equation}\label{eq:latter_estimate}
\tau^{\frac{n-2}{4}}\|e^{is_2\psi_2}\|_{H^K(M)}+\tau^{\frac{n-2}{4}}\|e^{is_2\psi_2}\|_{H^K(M)}\lesssim \tau^{K}.
\end{equation}
For example, see \cite[Lemma 4]{FO19} for the  estimate \eqref{eq:latter_estimate}.

Redefining $N'$ larger, if necessary, the equation \eqref{eq:equation_for_hatR} for $\hat{R}$ together with \eqref{eq:telescopic_sum} implies
\[
 (\Delta-V)\hat{R}_\tau=e^{\tau\Psi}\mathcal{O}_{H^K(M)}(\tau^{-N}).
\]
By writing 
\[
R_\tau =e^{-\tau\Psi}\hat{R}_\tau \quad \text{ and } \quad b_\tau=e^{-\Lambda} \hat b_\tau, 
\]
the claim follows from Carleman estimates \cite{DosSantosFerreira2009} if
\[
 \pm c_1\pm c_2\neq 0.
\]
Alternatively, if  
\[
 \pm c_1\pm c_2=0,
\]
 we may impose zero boundary conditions for $ R$ and use standard elliptic estimates. This completes the proof.
\end{proof}
\begin{rmk}\label{rem:second_lin_lemma}
%
We remark that in the case $\pm c_1\pm c_2\neq 0$, the correction term $R$ is a smooth function defined on an open manifold $U$ such that $M\Subset  U$, which satisfies 
$$
\norm{R}_{H^{K}(U)}\lesssim \tau^{-N},
$$ 
see \cite{DosSantosFerreira2009}.  In the case $\pm c_1\pm c_2=0$, the correction term $R$ has zero boundary values.
\end{rmk}

In addition, let us consider the second linearized equation \eqref{w_eq} for two possibly different potentials $q_1$ and $q_2$. We show that if $\Lambda_{q_1}=\Lambda_{q_2}$, then the solutions of Lemma \ref{2nd_lin_sol_free_bndr} corresponding to potentials $q_1$ and $q_2$ can be taken to have same boundary values. 
\begin{prop}\label{2nd_lin_sol_fixed_bndr}
Assume as in Lemma \ref{2nd_lin_sol_free_bndr} and adopt its notation, and assume that $\Lambda_{q_1}=\Lambda_{q_2}$ additionally. Then the second linearized equations 
 \begin{equation}\label{eq:second_lin_fixed_bndr_val_prop}
  (-\Delta+V)w^{(\beta)}=-2q_\beta v^{(1)}v^{(2)}, \quad \beta=1,2,
 \end{equation}
have solutions of the form
\[
 w^{(\beta)}=w_0^{(\beta)}+ e^{\tau \Psi}R^{(\beta)}.
\]
Here
\begin{align*}
 w_0^{(\beta)}&=\tau^{\frac{n-2}{4}}e^{(\pm s_1\pm s_2) x_1+\mathbf{i}(s_1 \psi_1+s_2 \psi_2)}b^{(a)}, \\
b^{(\beta)}&=\tau^{-2}b^{(a)}_{-2}+\cdots+ \tau^{-2N'}b^{(\beta)}_{-2N'}, \\
 b^{(\beta)}_{-2}&=\frac{2q_\beta}{(\pm c_1\pm c_2)^2-\abs{c_1\nabla'\psi_1+c_2\nabla'\psi_2}^2}a_0^{(1)}a_0^{(2)}.
\end{align*}
Moreover $R^{(\beta)}=\mathcal{O}_{L^2(M)}(\tau^{-N})$ ($\beta=1,2$) and 
\[
  w^{(1)}\big|_{\p M}= w^{(2)}\big|_{\p M}.
\]
\end{prop}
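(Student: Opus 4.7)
The plan is to start from the free-boundary solutions produced by Lemma~\ref{2nd_lin_sol_free_bndr}, enforce a common Dirichlet trace by solving a boundary value problem for $q_2$, and then use the DN-map hypothesis together with a Carleman estimate with boundary terms to show that the resulting modification does not spoil the $L^2$ remainder estimate.

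Concretely, for each $\beta=1,2$ Lemma~\ref{2nd_lin_sol_free_bndr} provides
\begin{equation*}
\tilde w^{(\beta)} = w_0^{(\beta)} + e^{\tau\Psi}\tilde R^{(\beta)},\qquad \|\tilde R^{(\beta)}\|_{H^K(M)}\lesssim \tau^{-N},
\end{equation*}
with $w_0^{(\beta)}$ exactly as in the statement of the proposition. Set $w^{(1)}:=\tilde w^{(1)}$, $f:=w^{(1)}|_{\p M}$, and let $w^{(2)}$ be the unique solution of \eqref{eq:second_lin_fixed_bndr_val_prop} for $\beta=2$ with Dirichlet data $f$; this is well posed since $0$ is not a Dirichlet eigenvalue for $-\Delta_g+V$. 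By construction $w^{(1)}|_{\p M}=w^{(2)}|_{\p M}=f$.

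Next, writing $\eta:=w^{(2)}-\tilde w^{(2)}$, one has $(-\Delta_g+V)\eta=0$ in $M$ with $\eta|_{\p M}=(\tilde w^{(1)}-\tilde w^{(2)})|_{\p M}$. Hence
\begin{equation*}
w^{(2)} = w_0^{(2)} + e^{\tau\Psi}R^{(2)},\qquad R^{(2)} := \tilde R^{(2)} + e^{-\tau\Psi}\eta,
\end{equation*}
and the task reduces to showing $\|e^{-\tau\Psi}\eta\|_{L^2(M)}\lesssim \tau^{-N}$. Here the hypothesis $\Lambda_{q_1}=\Lambda_{q_2}$ enters: differentiating the identity $\Lambda_{q_1}(f_\varepsilon)=\Lambda_{q_2}(f_\varepsilon)$ twice in $\varepsilon_1,\varepsilon_2$, using a family of Dirichlet data whose mixed second derivative recovers $f$ (in the spirit of Section~\ref{sec:higher_ord_lin_with_bndr_vals}), yields $\p_\nu w^{(1)}|_{\p M}=\p_\nu w^{(2)}|_{\p M}$, and therefore $\p_\nu\eta|_{\p M}=\p_\nu(\tilde w^{(1)}-\tilde w^{(2)})|_{\p M}$. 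We thus know both the Dirichlet and Neumann traces of $\eta$ explicitly in terms of the $w_0^{(\beta)}$'s and the (small) traces of the $\tilde R^{(\beta)}$'s.

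The decisive step, and the main obstacle, is a Carleman estimate with boundary terms for the conjugated operator $e^{-\tau\Psi}(-\Delta_g+V)e^{\tau\Psi}$, of the schematic form
\begin{equation*}
\|e^{-\tau\Psi}\eta\|_{L^2(M)} \lesssim \tau^{-1}\|e^{-\tau\Psi}(-\Delta_g+V)\eta\|_{L^2(M)} + \mathcal{B}\bigl(\eta|_{\p M},\,\p_\nu\eta|_{\p M}\bigr),
\end{equation*}
where $\mathcal{B}$ is a $\tau$-weighted boundary functional compatible with the complex weight $\Psi$, to be derived in the Appendix. The bulk source vanishes; substituting the explicit Cauchy data of $\eta$, using the cancellation $e^{-\tau\Psi}e^{\tau\Psi}=1$, together with $b^{(1)}-b^{(2)}=O(\tau^{-2})$ (cf.~\eqref{expansion_formula_new}) and the $O(\tau^{-N})$ bounds on the traces of $\tilde R^{(\beta)}$ (cf.~Remark~\ref{rem:second_lin_lemma}), one sees that $\mathcal{B}$ is bounded by $\tau^{-N}$ provided $N'$ in the quasimode construction is taken sufficiently large relative to $K$ and $N$. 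Combined with $\|\tilde R^{(\beta)}\|_{L^2(M)}\lesssim \tau^{-N}$ from Lemma~\ref{2nd_lin_sol_free_bndr}, this yields the claimed bound $\|R^{(\beta)}\|_{L^2(M)}\lesssim \tau^{-N}$ and completes the proof.
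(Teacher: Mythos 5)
Your overall architecture — pick one free-boundary solution, solve a Dirichlet problem for the other potential to force matching traces, then control the resulting correction via a boundary Carleman estimate — is exactly what the paper does (up to the symmetric choice of whether to fix $w^{(1)}$ or $w^{(2)}$ first). However, there is a genuine gap at the decisive step.

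You claim that the boundary functional $\mathcal{B}$ is bounded by $\tau^{-N}$ "using $b^{(1)}-b^{(2)}=O(\tau^{-2})$". This is not strong enough, and in fact it is the wrong mechanism. The trace you must control is $e^{-\tau\Psi}\eta|_{\p M}=e^{-\tau\Psi}(\tilde w^{(1)}-\tilde w^{(2)})|_{\p M} = \tau^{\frac{n-2}{4}}e^{\Lambda}(b^{(1)}-b^{(2)})|_{\p M}+(\tilde R^{(1)}-\tilde R^{(2)})|_{\p M}$. The leading term of $b^{(1)}-b^{(2)}$ is $\tau^{-2}(b^{(1)}_{-2}-b^{(2)}_{-2})$, which is proportional to $q_1-q_2$; after the $\tau^{\frac{n-2}{4}}$ normalization this gives something of order $\tau^{\frac{n-2}{4}-2}$, a fixed power of $\tau$ that certainly does not decay like $\tau^{-N}$ for the large $N$ you need. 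The fact that $N'$ is large only pushes the remainder of the expansion to high order; it does nothing to the leading term. What is actually needed is that $q_1-q_2$ vanishes \emph{to infinite order} on $\p M$, so that $b^{(1)}-b^{(2)}$ and all the relevant normal derivatives vanish on $\p M$ and the boundary trace reduces to the small traces of $\tilde R^{(\beta)}$. That fact is not automatic: it is supplied by a separate boundary determination argument (the paper's Proposition~\ref{Prop_boundary_det_bodytext}), which shows that knowledge of the nonlinear DN map determines the full jet of $q$ at $\p M$. Your proof never invokes this, and without it the Carleman estimate alone cannot close the argument.

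Two further, smaller omissions: the Carleman estimate in the paper requires the \emph{second} normal derivative trace as well (so you need $q_1=q_2$ on $\p M$ to the appropriate order, again boundary determination, and you need to know $\p_\nu^2 w^{(1)}|_{\p M}=\p_\nu^2 w^{(2)}|_{\p M}$, which again follows from the jet determination of the solution and not merely from $\Lambda_{q_1}=\Lambda_{q_2}$); and the case $\pm c_1\pm c_2=0$ needs separate treatment, since there the weight $e^{\tau\Psi}$ has no exponential growth in $x_1$ and the Carleman estimate is not the right tool — in that case the free-boundary correction terms already have zero boundary values and one concludes directly.
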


In order to prove Proposition \ref{2nd_lin_sol_fixed_bndr}, we need a \emph{boundary determination} result:

\begin{prop}[Boundary determination]\label{Prop_boundary_det_bodytext}
	For $m\geq 2$, $m\in \N$, let $(M,g)$ be a compact Riemannian manifold with $C^\infty$ boundary $\p M$ and consider the boundary value problem 
	\begin{align}\label{eq:boundary_det_equation}
		\begin{cases}
			(-\Delta_g +V) u+qu^m=0 & \text{ in } M,\\
			u= f & \text{ on } \p M.
		\end{cases}
	\end{align}
	Assume that the DN map $\Lambda_q$ of the equation \eqref{eq:boundary_det_equation} is known for small boundary values. Then $\Lambda_q$ determines the formal Taylor series of $q$ on the boundary $\p M$. 
	
	In addition, if $f\in C^{2,\alpha}(\p M)$ is so small that \eqref{eq:boundary_det_equation} has a unique small solution, the DN map determines the formal Taylor series of the solution $u=u_f$ at any point on the boundary.
\end{prop}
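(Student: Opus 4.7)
The plan is to combine an $m$-fold linearization of equation~\eqref{eq:boundary_det_equation} at $f=0$ with a boundary-concentrated WKB construction for the linearized equation $(-\Delta_g+V)v=0$, and to extract the Taylor coefficients of $q$ at an arbitrary boundary point $x_0\in\p M$ from the resulting asymptotic expansion. Since $u_f$ is smooth in $f$, the DN map $\Lambda_q$ determines all its Fr\'echet derivatives at $f=0$. Differentiating $m$ times at $\eps=0$ with $f_\eps=\sum_{i=1}^m\eps_i f_i$ and integrating by parts exactly as in Lemma~\ref{Lem:Integral identities}, I would first obtain the identity
\[
\int_{\p M}\left.\p^m_{\eps_1\cdots\eps_m}\right|_{\eps=0}\Lambda_q(f_\eps)\,f_{m+1}\,dS=-m!\int_M q\,v^{(1)}\cdots v^{(m+1)}\,dV,
\]
where each $v^{(i)}$ solves $(-\Delta_g+V)v^{(i)}=0$ with $v^{(i)}|_{\p M}=f_i$, so that $\Lambda_q$ determines the $(m+1)$-linear functional on boundary data appearing on the right-hand side.

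Next, fix $x_0\in\p M$ and work in boundary normal coordinates $(x',x_n)$ with $x_0=0$ and $\p M=\{x_n=0\}$ locally. For $\tau>0$ large and tangential covectors $\xi'_1,\ldots,\xi'_{m+1}\in\R^{n-1}$ satisfying the closure condition $\sum_i\xi'_i=0$, construct approximate solutions
\[
v^{(i)}_\tau(x)=\chi(x')\,e^{\mathbf{i}\tau x'\cdot\xi'_i\,-\,\tau|\xi'_i|_{g_0(0)}x_n}\,a^{(i)}(x,\tau)+r^{(i)}_\tau,
\]
with $\chi$ a smooth tangential cutoff supported near the origin, amplitudes $a^{(i)}$ built from transport equations on the characteristic variety, and remainders $r^{(i)}_\tau$ small in $H^K(M)$ obtained by standard elliptic estimates for the boundary value problem with the WKB error as source. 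Substituting into the identity above, the tangential oscillations in $\prod_i v^{(i)}_\tau$ cancel by $\sum\xi'_i=0$, while the $x_n$-factor decays like $e^{-c\tau x_n}$ with $c=\sum_i|\xi'_i|_{g_0(0)}>0$. Expanding $q(x',x_n)$ in Taylor series in $x_n$ and using $\int_0^\infty x_n^k e^{-c\tau x_n}\,dx_n=k!/(c\tau)^{k+1}$, one obtains an asymptotic expansion in powers of $1/\tau$ whose coefficient at order $\tau^{-k-1}$ is an explicit nonzero constant times $\int_{\p M}\chi^{m+1}(x')\,\p_{x_n}^k q(x',0)\,dS(x')$. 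Varying the cutoff $\chi$ (e.g.\ rescaling $\chi_r(x')=\chi(x'/r)$ and sending $r\to 0$) recovers $\p_{x_n}^k q(x',0)$ pointwise on $\p M$ for every $k\ge 0$, and hence the complete formal Taylor series of $q$ at every boundary point. The main technical obstacle will be the bookkeeping required to isolate each order in $1/\tau$ from the subleading WKB amplitudes and the remainders $r^{(i)}_\tau$, and to verify that the leading coefficient multiplying $q(x_0)$ is nonzero for some admissible choice of $\{\xi'_i\}$; this is handled by pushing the WKB expansion to sufficiently high order.

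For the second assertion, once the formal Taylor series of $q$ at each boundary point is known, the Taylor series of $u=u_f$ at any $x_0\in\p M$ follows inductively from the equation. Indeed, $u|_{\p M}=f$ and $\p_\nu u|_{\p M}=\Lambda_q(f)$ are given. In boundary normal coordinates the Laplacian decomposes as $-\Delta_g=-\p_{x_n}^2-H(x)\p_{x_n}-\Delta'_{h(x_n)}$, with $H$ and $\Delta'$ determined by $g$, so the equation $(-\Delta_g+V)u+qu^m=0$ restricted to $\p M$ gives $\p_{x_n}^2 u|_{\p M}$ as an algebraic expression in the known quantities $f$, $\Lambda_q(f)$, $V|_{\p M}$, $q|_{\p M}$, and geometric data. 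Differentiating the equation $k$ times in $x_n$ and restricting to $\p M$ determines $\p_{x_n}^{k+2}u|_{\p M}$ in terms of the Taylor coefficients of $V,q$ at $x_0$ (known from Part 1) and lower-order normal derivatives of $u$ already produced, completing the induction and yielding the full formal Taylor series of $u_f$ at every boundary point.
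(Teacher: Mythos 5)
Your proposal is correct in spirit but takes a genuinely different route from the paper. You attack the problem via the $m$-th order linearization at $f=0$, obtain the multilinear integral identity $\int_M q\,v^{(1)}\cdots v^{(m+1)}\,dV$, and then feed in boundary-concentrated oscillating-decaying approximate solutions, extracting the normal jet of $q$ from the resulting asymptotic expansion in $\tau^{-1}$. This is the classical Kohn--Vogelius/Sylvester--Uhlmann philosophy applied through the nonlinearity. The paper instead performs only the \emph{first} linearization, but at a \emph{nonzero} boundary datum $f_0=\varepsilon_0$ (a small constant): the linearized equation is then a linear Schr\"odinger equation with effective potential $\widetilde q = V + m q u_0^{m-1}$, whose DN map is determined by $\Lambda_q$; the known boundary-determination theorem for linear Schr\"odinger operators (\cite[Theorem 8.4]{ferreira2009limiting}) immediately yields the full jet of $\widetilde q$ on $\partial M$. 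One then peels off $q$ and its normal derivatives algebraically, using the identity $\partial_{x_n}^2 u_0 = P u_0$ coming from the equation in boundary normal coordinates to supply the needed normal derivatives of $u_0$. The paper's route buys a substantial reduction in technical overhead: it avoids constructing boundary CGOs for the nonlinear problem, avoids the amplitude/remainder bookkeeping in the $\tau^{-1}$-expansion that you correctly flag as the ``main technical obstacle,'' and relies on an off-the-shelf linear result. Your route, if carried out, is more self-contained but would require verifying (i) that the phase $ix'\cdot\xi'_i - |\xi'_i|_{g_0(0)}x_n$ solves the complex eikonal equation only at $x_0$ and must be corrected away from it, (ii) that the WKB error and the remainders $r^{(i)}_\tau$ from the inhomogeneous boundary-value correction do not pollute the relevant orders in $\tau^{-1}$, and (iii) that the leading coefficient is nonzero for some admissible closed configuration $\sum\xi'_i=0$. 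Your second part (Taylor series of the solution from the equation in boundary normal coordinates) coincides with the paper's argument.
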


We also need the following Carleman estimate with boundary terms.

\begin{lem}[Carleman estimate with boundary terms]\label{lem0}
Let $(M,g)$ be a  compact, smooth, transversally anisotropic Riemannian manifold with a smooth boundary. Let $V \in L^{\infty}(M)$. There exists constants $\tau_0>0$ and $C>0$ depending only on $(M,g)$ and $\|V\|_{L^{\infty}(M)}$ such that given any $|\tau|>\tau_0$, and any $v \in C^2(M)$, there holds
\begin{align}\label{est_0} 
\begin{split}
C|\tau|\,\|v\|_{L^2(M)} \leq &\|e^{-\tau t}(-\Delta_g+V)(e^{\tau t}v)\|_{L^2(M)}+|\tau|^{\frac{3}{2}}\|v\|_{W^{2,\infty}(\p M)} \\
&+|\tau|^{\frac{3}{2}}\|\p_\nu v\|_{W^{1,\infty}(\p M)}+|\tau|^{\frac{3}{2}}\|\p^2_\nu v\|_{L^\infty(\p M)},
\end{split}
\end{align}
\end{lem}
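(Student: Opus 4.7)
The plan is to reduce the estimate to the boundary-term-free Carleman estimate already available on CTA manifolds by extending $v$ across $\p M$ in a $\tau$-dependent fashion. Since Section~\ref{Section 2} has reduced us to $g = dt^2 + g_0$, the conjugated operator takes the form
\begin{equation*}
P_\tau := e^{-\tau t}(-\Delta_g + V)e^{\tau t} = -\Delta_g - 2\tau\p_t - \tau^2 + V,
\end{equation*}
and the standard limiting-Carleman-weight estimate of \cite{DosSantosFerreira2009} gives
\begin{equation*}
|\tau|\,\|u\|_{L^2(\tilde M)} \leq C\,\|P_\tau u\|_{L^2(\tilde M)} \qquad \text{for all } u \in C^2_c(\tilde M^{\text{int}}),
\end{equation*}
on any compact CTA extension $(\tilde M, g)$ of $(M, g)$; such an extension is built by enlarging the transversal factor to $\hat M_0$ and thickening the $t$-interval.

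I would then construct a $C^2$ extension $\tilde v$ of $v$ from $M$ to $\tilde M$, supported in a tube of thickness $\sim 1/|\tau|$ around $\p M$ on the exterior side. In Fermi coordinates $(s, y)$, with $s \geq 0$ the outward normal distance and $y \in \p M$, set
\begin{equation*}
\tilde v(s, y) := \chi(\tau s)\Bigl(v(y) + s\, \p_\nu v(y) + \tfrac{s^2}{2}\, \p_\nu^2 v(y)\Bigr),
\end{equation*}
with $\chi \in C_c^\infty(\R)$ a fixed cutoff satisfying $\chi(0) = 1$, $\chi'(0) = \chi''(0) = 0$ and $\supp \chi \subset [-1,1]$, and $\tilde v = v$ on $M$. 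The conditions on $\chi$ guarantee Taylor matching of $v, \p_\nu v, \p_\nu^2 v$ on $\p M$, so $\tilde v \in C^2(\tilde M)$, and $\tilde v$ is compactly supported in $\tilde M^{\text{int}}$ once $|\tau|$ is large enough.

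Applying the interior Carleman estimate to $\tilde v$ and splitting the norm by $\tilde M = M \cup (\tilde M \setminus M)$ yields
\begin{equation*}
|\tau|\,\|v\|_{L^2(M)} \leq |\tau|\,\|\tilde v\|_{L^2(\tilde M)} \leq C\,\|P_\tau v\|_{L^2(M)} + C\,\|P_\tau \tilde v\|_{L^2(\tilde M \setminus M)}.
\end{equation*}
The boundary correction is controlled by noting that on the shell $\{0 \leq s \lesssim 1/|\tau|\}$, differentiating the cutoff $\chi(\tau s)$ costs up to two powers of $\tau$, so each term of $P_\tau \tilde v$ is pointwise bounded by $|\tau|^2$ times the $W^{2,\infty}(\p M)$, $W^{1,\infty}(\p M)$ and $L^\infty(\p M)$ norms of $v$, $\p_\nu v$ and $\p_\nu^2 v$ respectively; since the shell has volume $\sim 1/|\tau|$, taking $L^2$ norms produces
\begin{equation*}
\|P_\tau \tilde v\|_{L^2(\tilde M \setminus M)} \lesssim |\tau|^{3/2}\bigl(\|v\|_{W^{2,\infty}(\p M)} + \|\p_\nu v\|_{W^{1,\infty}(\p M)} + \|\p^2_\nu v\|_{L^\infty(\p M)}\bigr),
\end{equation*}
which is exactly the form of the boundary remainder appearing in \eqref{est_0}.

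The principal obstacle is the delicate balancing of three scales: the thickness $1/|\tau|$ of the shell, the pointwise size $|\tau|^2$ of $P_\tau \tilde v$ on it, and the volume factor $(1/|\tau|)^{1/2}$ in the $L^2$ norm must combine to the exact power $|\tau|^{3/2}$ in the estimate, which forces the specific choices of cutoff scale and Taylor order. A more hands-on alternative is to carry out the positive commutator identity $\|P_\tau v\|^2 = \|Av\|^2 + \|Bv\|^2 + 2\re(Av,Bv)_{L^2(M)}$ with $A = -\Delta_g - \tau^2 + V$ and $B = -2\tau\p_t$ directly on $M$, track every boundary integral produced in the cross term by integration by parts on $\p M$, and absorb them via Young's inequality with the same $|\tau|^{3/2}$ weighting; either route delivers the stated bound.
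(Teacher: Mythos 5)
Your primary route---cutting off the Taylor polynomial of $v$ across $\p M$ in a shell of thickness $\sim 1/|\tau|$ and feeding the extended function into the boundary-free Carleman estimate of \cite{DosSantosFerreira2009}---is genuinely different from the paper's proof. The paper never invokes the black-box interior estimate: it directly computes the single cross term $\int_M P_\tau v\,\p_t v\,dV_g$ (exactly the positive-commutator term you offer as the ``more hands-on alternative''), integrates by parts, and shows the resulting boundary contributions vanish when $v$ is replaced by a \emph{fixed}, $\tau$-independent $C^2$ extension into a slightly larger manifold; the $|\tau|^3$ prefactor on the squared boundary terms then arises from the $\tau^2$ weight on the zero-order piece $\tau^2\int\hat v\,\p_t\hat v\,dV_g$, and a Poincar\'e inequality with boundary correction closes the argument. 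Your power counting $|\tau|^{2}\cdot(|\tau|^{-1})^{1/2}=|\tau|^{3/2}$ is consistent with theirs; blackboxing the interior estimate saves you from redoing the commutator computation, at the cost of managing a $\tau$-dependent cutoff.

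One step of your argument needs more care, and the paper in fact glosses over the very same issue in its extension lemma. When you assert that each term of $P_\tau\tilde v$ is pointwise $\lesssim|\tau|^2$ times the stated boundary norms, you are implicitly using, through the tangential part of $\Delta_g$, the quantity $\p_y^2 T(s,y)$ with $T=v+s\,\p_\nu v+\tfrac{s^2}{2}\p_\nu^2 v$. This contains $s\,\p_y^2\p_\nu v$ and $\tfrac{s^2}{2}\,\p_y^2\p_\nu^2 v$, and the second tangential derivatives of $\p_\nu v$ and $\p_\nu^2 v$ are \emph{not} controlled by $\|\p_\nu v\|_{W^{1,\infty}(\p M)}$ and $\|\p_\nu^2 v\|_{L^\infty(\p M)}$; for $v$ merely in $C^2(M)$ they need not even exist, so the naive $\tilde v$ need not be $C^2$ and the interior Carleman estimate cannot legitimately be applied to it. The standard repair is a Whitney-type extension: convolve the boundary traces with a mollifier at scale $s$ before forming the Taylor polynomial. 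Each tangential derivative of the extension then trades a factor $s^{-1}$ coming from the mollifier against the factor $s$ already present, landing exactly on the stated norms, while the $s$-derivatives remain bounded because the $s$-derivatives of the mollifier integrate to zero against constants and linear polynomials. Your shell argument is sound once this mollified extension replaces the naked $\chi(\tau s)\,T(s,y)$; as written, the pointwise bound on $P_\tau\tilde v$ and the $C^2$ regularity of $\tilde v$ are not established.
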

We have placed the proofs of the above two results in the the Appendix \ref{Section: Boundary determination} and \ref{Section: Boundary Carleman}, respectively. The proof of Proposition \ref{Prop_boundary_det_bodytext} uses a standard boundary determination result for linearized second order elliptic equations. The proof of Lemma \ref{lem0} is by integration by parts and using standard elliptic estimates. 
In this paper, the preceding Carleman estimate with the $L^2(M)$ bound is sufficient in deriving the upper bound
for the correction term $R^{(\beta)}$ in Proposition~\ref{2nd_lin_sol_fixed_bndr} for $\beta=1,2$; however let us also mention that analogous Carleman estimates with boundary terms can be obtained in higher Sobolev spaces $H^k(M)$, for $k\in \N$.

\begin{proof}[Proof of Proposition \ref{2nd_lin_sol_fixed_bndr}]
Let us first consider the case $\pm c_1\pm c_2\neq 0$. 
By Lemma \ref{2nd_lin_sol_free_bndr} we have a solution of the form 
\[
 w^{(2)}=w_0^{(2)}+e^{\tau\Psi} R^{(2)}
\]
for the equation
 \[
  (-\Delta+V)w^{(2)}=-2q_2v^{(1)}v^{(2)}.
 \]
In general, controlling the boundary value of $R^{(2)}$ is hard. As remarked in Remark \ref{rem:second_lin_lemma}, we have that $R^{(2)}$ is a smooth function defined on an open manifold $U$ such that $M\Subset  U$, which satisfies
\[
 \norm{R^{(2)}}_{H^{K}(U)}\leq \frac{C}{\tau^{N}}
\]
if $N'=N'(K,N)$ was chosen large enough.

By redefining $K$ as $K+5/2$ (and thus also redefining also $N'$ larger) and using trace theorem 
\begin{equation}\label{eq:dir_data_small}
 R^{(2)}|_{\p M}=\mathcal{O}_{H^K(\p M)}(\tau^{-N})
\end{equation}
and 
\begin{equation}\label{eq:neu_data_small}
 \left. \p_\nu R^{(2)}\right|_{\p M}=\mathcal{O}_{H^k(\p M)}(\tau^{-K}),   \qquad   \left. \p_\nu^2 R^{(2)}\right|_{\p M}=\mathcal{O}_{H^k(\p M)}(\tau^{-K}).
\end{equation}

Let us then consider the equation \eqref{eq:second_lin_fixed_bndr_val_prop} for $q_1$ with boundary value $\left. w^{(2)}\right|_{\p M}$. By the  standard elliptic theory, we know that there is a unique solution $w^{(1)}$ to the equation 
\begin{align}
 \begin{cases}
		(-\Delta+V)w^{(1)}=-2q_1v^{(1)}v^{(2)} &\text{ in } M,\\
		w^{(1)}=\left. w^{(2)}\right|_{\p M} &\text{ on }\p M.
	\end{cases}
\end{align}
We write
\[
 w^{(1)}=w_0^{(1)}+e^{\tau\Psi}R^{(1)},
\]
where $w_0^{(1)}=\tau^{\frac{n-2}{4}}e^{(\pm s_1\pm s_2) x_1+\mathbf{i}(s_1 \psi_1+s_1 \psi_1)}b_\tau^{(1)}$ is the WKB ansatz given as in Lemma \ref{2nd_lin_sol_free_bndr}  such that
\[
 (\Delta-V) w_0^{(1)}- 2q_1v^{(1)}v^{(2)}=e^{\tau\Psi}F.
\]
Here 
\begin{equation}\label{eq:F}
F=\mathcal{O}_{H^K(M)}(\tau^{-N}), 
\end{equation}
which can be derived by making the WKB ansatz $w_0^{(1)}$ precise enough (i.e. $N'$ large enough). Since $w^{(1)}$ solves $(-\Delta+V)w^{(1)}=-2q_1v^{(1)}v^{(2)}$, we have that $R^{(1)}$ solves the conjugated equation
\[
 e^{-\tau\Psi}(\Delta-V) e^{\tau\Psi}R^{(1)}=\mathcal{O}_{H^K(M)}(\tau^{-N}).
\]
Unfortunately, we can not directly deduce from standard Carleman estimates that the correction term $\norm{R^{(1)}}_{L^2(M)}$ is small. 

As matter of fact, in order to obtain that $\norm{R^{(1)}}_{L^2(M)}$ is small, we use the assumption $\Lambda_{q_1}=\Lambda_{q_2}$, which implies that the DN maps of the second linearized equations \eqref{2nd_lin_sol_fixed_bndr} for $q_1$ and $q_2$ are the same. By additionally using the boundary determination result (Proposition \ref{Prop_boundary_det_bodytext}), we have that 
\[
 q_1=q_2 \quad \text{ on }\p M
\]
up to infinite order. The ansatzes $w_0^{(1)}$ and $w_0^{(2)}$ depend on $(M,g)$ and the potentials $q_1$ and $q_2$ respectively. The dependence on the potentials is local. That is, the dependence is on pointwise values of the potentials and their derivatives, see \eqref{expansion_formula_new}. It follows that 
\begin{equation}\label{eq:w0k_bndr_val}
\left.  w_0^{(1)}\right|_{\p M}= \left. w_0^{(2)}\right|_{\p M} 
\end{equation}
and also 
\begin{equation}\label{eq:neumann_vals_of_ansatz}
\left. \p_\nu w_0^{(1)}\right|_{\p M}= \left. \p_\nu w_0^{(2)}\right|_{\p M} \quad \text{ and } \quad \left. \p_\nu^2 w_0^{(1)}\right|_{\p M}= \left. \p_\nu^2 w_0^{(2)}\right|_{\p M}.
\end{equation}
Consequently, by using $\left. w_1\right|_{\p M}=\left. w_2\right|_{\p M}$, we have that
\[
  \left. R^{(1)}\right|_{\p M}= \left. e^{-\tau\Psi}\right|_{\p M}(w^{(1)}-\left. w_0^{(1)})\right|_{\p M}=\left. e^{-\tau\Psi}\right|_{\p M} \left. (w^{(2)}-w_0^{(2)})\right|_{\p M}= \left. R^{(2)}\right|_{\p M}.
\]
By \eqref{eq:dir_data_small} we thus have that
\[
 R^{(1)}\big|_{\p M}=\mathcal{O}_{H^K(\p M)}(\tau^{-N}).
\]

Furthermore,  we have  $\left. \p_\nu w_1 \right|_{\p M}=\left. \p_\nu w_2\right|_{\p M}$ since $\Lambda_{q_1}=\Lambda_{q_2}$. Consequently, by \eqref{eq:neumann_vals_of_ansatz} we have
\begin{align*}
 \p_\nu R^{(1)}\big|_{\p M}&=\p_\nu\left(e^{-\tau\Psi}(w^{(1)}-w_0^{(1)})\right)\Big|_{\p M}=\p_\nu \left(e^{-\tau\Psi}(w^{(2)}-w_0^{(2)})\right)\Big|_{\p M} \\
 &= \p_\nu R^{(2)}\big|_{\p M}=\mathcal{O}_{H^K(\p M)}(\tau^{-N}).
\end{align*}
By the boundary determination result of solutions on the boundary in Proposition \ref{Prop_boundary_det_bodytext}, we have $\left. \p^2_\nu w_1\right|_{\p M}= \left. \p^2_\nu w_2\right|_{\p M}$. Thus, combining \eqref{eq:neu_data_small} and \eqref{eq:neumann_vals_of_ansatz} shows $\left. \p_\nu^2 R^{(1)}\right|_{\p M}=\mathcal{O}_{H^K(\p M)}(\tau^{-N})$. 
In conclusion, we have that $R^{(1)}$ solves 
\begin{align}\label{eq:equation_for_r1}
 \begin{cases}
		e^{-\tau\Psi}(\Delta-V)e^{\tau\Psi} R^{(1)} = \mathcal{O}_{H^K(M)}(\tau^{-N}) &\text{ in } M,\\
		\p_\nu^{\s \ell} R^{(1)}=\mathcal{O}_{H^K(\p M)}(\tau^{-N}) &\text{ on }\p M, \quad \ell=0,1,2.
	\end{cases}
\end{align}
Now, it follows from Lemma \ref{lem0} by taking $K=\frac{n+1}{2}$ and using the Sobolev embedding $H^{K}(\p M)\subset L^{\infty}(\p M)$, and finally redefining $N$ as $N-2$ that 
\[
 \norm{R^{(1)}}_{L^2(M)}=\mathcal{O}(\tau^{-N}).
\]

In the remaining case $\pm c_1\pm c_2= 0$, the correction terms $R^{(1)}$ and $R^{(2)}$ have zero boundary values by Remark \ref{rem:second_lin_lemma}. Since we also have $w_0^{(1)}|_{\p M}=w_0^{(2)}|_{\p M}$ by \eqref{eq:w0k_bndr_val}, the claim follows also in this case.
\end{proof}
\subsection{Solutions for the third linearization}
In this section, we consider solutions for the third linearizations of $(-\Delta+V)u+qu^2=0$ at the zero solution. Recalling that the third linearized equation is of the form
\begin{align}\label{w_eq 3rd_constr_sols}
		(-\Delta_{g}+V)\omega^{(ijk)}=-2q\left( v^{(i)} w^{(jk)} + v^{(j)}w^{(ik)} + v^{(3=k)} w^{(ij)}\right) 
		&\text{ in }M,
\end{align}
where $v^{(i)}$ and $w^{(kl)}$, are solutions to  \eqref{w_eq} and \eqref{w_eq 3rd}, respectively, for different $i,j,k=1,2,3$. Again, we assume that the solutions $v^{(k)}$ are CGOs of the form \eqref{cgo_sol_1}:
\[
 v^{(k)} = e^{\pm s_k x_1}\left( \tau^{\frac{n-2}{8}} e^{\mathbf{i}s_k \psi_k} a^{(k)}_\tau + r_\tau^{(k)}\right),
\]
where $\psi_k$ corresponds to a nontangential geodesics $\gamma_k$ of $(M_0,g_0)$. Here $s_k=c_k\tau+\mathbf{i}\lambda_k$ also as before. We assume that $\gamma_1$, $\gamma_2$ and $\gamma_3$ intersect at the point $p_0$. We also assume that the supports of $v^{(k)}$ restricted to $M_0$ are so small that that the mutual support of $v^{(1)}$, $v^{(2)}$ and $v^{(3)}$ does not intersect the points on the geodesics $\gamma_k$ where only two of the geodesics intersect.
Lastly, we assume that all the pairs of geodesics $\gamma_i$ and $\gamma_k$, $i\neq k$, intersect properly. 

In order to analyze the solution ansatz for the third linearized equation \eqref{w_eq 3rd_constr_sols}, we can simply consider the case $i=1$, $j=2$ and $k=3$.
By Lemma \ref{2nd_lin_sol_free_bndr}, the equation $(-\Delta +V)w^{(23)}=-qv^{(2)}v^{(3)}$ has a solution of the form
\[
 w^{(23)}=w_0^{(23)}+e^{\tau\Psi^{(23)}} R^{(23)}.
\]
Here $w_0^{(23)}$ is given by the WKB ansatz
\begin{align*}
 w_0^{(23)}&=\tau^{\frac{n-2}{4}}e^{(\pm s_2\pm s_3) x_1+\mathbf{i}(s_2 \psi_2+s_3 \psi_3)}b^{(23)}, \\
b^{(23)}&=\tau^{-2}b^{(23)}_{-2}+\cdots+ \tau^{-2N}b^{(23)}_{-2N'}, \\
 b^{(23)}_{-2}&=\frac{2q}{(\pm c_2\pm c_3)^2-\abs{c_2\nabla'\psi_2+c_3\nabla'\psi_3}^2}a_0^{(2)}a_0^{(3)}.
\end{align*}
We take the solutions $w^{(13)}$ and $w^{(12)}$ to be ones given by similar formulas as $w^{(23)}$.  Using these formulas for $w^{(ik)}$ and $v^{(j)}$ we see that \eqref{w_eq 3rd_constr_sols} can be written as
	\[
	 (\Delta_{g}-V)\omega=\tau^{\frac{3(n-2)}{8}}e^{\tau \widetilde\Psi}(e^{\widetilde\Lambda} H+\rho),
	\]
	where $\omega\equiv \omega^{(123)}$, and 
	\begin{align}\label{eq:tildePsi}
	 \begin{split}
	 \widetilde\Psi&=(\pm c_1\pm c_2\pm c_3)x_1+\mathbf{i}c_1 \psi_1+\mathbf{i}c_2 \psi_2+\mathbf{i}c_3 \psi_3 \\
	 \widetilde\Lambda&=\mathbf{i}(\pm\lambda_1\pm \lambda_2\pm \lambda_3)x_1-\lambda_1 \psi_1-\lambda_2 \psi_2-\lambda_3 \psi_3 \\
	 H&=\sum_{k=2}^{3N'-1} \frac{H_{-k}}{\tau^{k}}, \\
	 \rho&=\mathcal{O}_{H^K(M)}(\tau^{-N}).
	 \end{split}
	 \end{align}
	The amplitude $H\in C^\infty(M)$ is supported on neighborhoods of the points where all the geodesics $\gamma_1$, $\gamma_2$ and $\gamma_3$ intersect and which do not contain points where only two of the geodesics $\gamma_k$ intersect. The order $3N'-1$ of the amplitude $H$ is a consequence of the respective orders $2N'$ and $N'-1$ of the expansions of $w^{(ij)}$ and $a^{(k)}$. We have also assumed  $N'$ to be large enough so that the condition for $\rho$ in \eqref{eq:tildePsi} holds. Meanwhile, the factor $\tau^{\frac{3(n-2)}{8}}$ is a result of the product of the respective normalization factors $\tau^{\frac{n-2}{4}}$ and $\tau^{\frac{n-2}{8}}$ of $w^{(ij)}$ and $v^{(k)}$.  
	The functions $H_{-k}$ depend on $q$ only in terms of the pointwise values $q$ and its derivatives.

	By \eqref{expansion_formula_new}, the leading order coefficient of $H$ satisfies 
	\begin{align}\label{eq:H0_comp_orig}
   \begin{split}
   	 H_{-2}=&4q^2a_0^{(1)}a_0^{(2)}a_0^{(3)} \\
   	 &\times \Bigg(\frac{1}{(\pm c_1\pm c_2)^2-\abs{c_1\nabla'\psi_1+c_2\nabla'\psi_2}^2}  \\
   	 &\qquad  +\frac{1}{(\pm c_1\pm c_3)^2-\abs{c_1\nabla'\psi_1+c_3\nabla'\psi_3}^2} \\
   	& \qquad +\frac{1}{(\pm c_2\pm c_3)^2-\abs{c_2\nabla'\psi_2+c_3\nabla'\psi_3}^2}\Bigg). 
   \end{split}
	\end{align}
If we additionally assume that 
	\[
	 \abs{\langle \nabla \widetilde\Psi, \nabla \widetilde\Psi\rangle}\geq \text{ constant } > 0 
	\]
    on the support of $H$, 
    it makes sense to try an ansatz
    \[
     \tau^{\frac{3(n-2)}{8}}e^{\tau \widetilde\Psi}e^{\widetilde\Lambda}B
    \]
    for a solution $\omega$ of \eqref{w_eq 3rd_constr_sols}, where
    \begin{equation}\label{expansion_formula_new_B}
    B=\sum_{k=4}^{3N'+1} \frac{B_{-k}}{\tau^k}
    \end{equation}
    Here $B_{-k}$, $k=4,3,\ldots, 2(N'+2)$ are given by the recursive formula
    \begin{align}\label{transport_equation_new_B}
		B_{-k}= \frac{e^{\widetilde\Lambda} H_{-k+2}-[2\langle\nabla  B_{-k+1}, \nabla \Psi\rangle+B_{-k+1}\Delta\Psi] 
			-(\Delta-V)B_{-k+2}}{\langle \nabla \widetilde \Psi, \nabla \widetilde\Psi\rangle}
	\end{align}
	and setting $B_{-2}=B_{-3}=0$. 
    Especially
    \begin{align}\label{eq:B_2}
B_{-4}=\frac{H_{-2}}{\langle \nabla \widetilde\Psi, \nabla \widetilde\Psi\rangle },
\end{align}
where $H_{-2}$ is given in \eqref{eq:H0_comp_orig}. The support of $B$ is the mutual support of $v^{(k)}$.

We obtain the following result. We omit the proof as it is a direct adaptation of the proof of Lemma \ref{2nd_lin_sol_free_bndr}.
\begin{lem}\label{3rd_lin_sol_free_bndr}
 Let $K,N\in \N\cup \{0\}$. Assume that $v^{(1)}, v^{(2)}, v^{(3)}$ are CGOs of the form \eqref{cgo_sol_1} corresponding to geodesics $\gamma_1, \gamma_2, \gamma_3$ on $M_0$, respectively, such that the pairs of geodesics $\gamma_k$ and $\gamma_i$  intersect properly for $i,k=1,2,3$ and $i\neq k$.  Assume additionally that $\widetilde\Psi$ given by \eqref{eq:tildePsi} satisfies
 \[
  \langle \nabla\widetilde \Psi, \nabla\widetilde \Psi\rangle\neq 0
 \]
at the points where all the geodesics $\gamma_1$, $\gamma_2$ and $\gamma_3$ intersect. 
If the restrictions of the amplitudes $a^{(k)}$ of $v^{(k)}$ to $M_0$ are supported in small enough neighborhoods of the geodesics $\gamma_k$, and $N'=N'(K,N)$ is large enough, then the equation 
\begin{align}\label{eq:3nd_lin_sol_fixed_bndr}
(-\Delta_{g}+V)\omega=-2q\left( v^{(1)} w^{(23)} + v^{(2)}w^{(13)} + v^{(3)} w^{(12)}\right) 
		&\text{ in }M,
    \end{align}
where $w^{(ik)}$ is given as in Lemma \ref{2nd_lin_sol_free_bndr} has a smooth solution $\omega$ up to the boundary $\p M$ with the following properties: The solution $w$ is of the form
\[
 \omega=\omega_0+e^{\tau \widetilde\Psi}\widetilde R,
\]
where the function $\omega_0$ is of the form 
\[
	\omega_0=\tau^{\frac{3(n-2)}{8}}e^{\tau \widetilde\Psi}e^{\widetilde\Lambda}B,
	\]
    where $\widetilde\Lambda$ and $B=B_\tau$ are given by \eqref{eq:tildePsi} and \eqref{expansion_formula_new_B} respectively. Especially $B_{-4}$ is given by \eqref{eq:B_2}. 
	The amplitude $B$ depends on $q$ only in terms of the pointwise values $q$ and its derivatives. The remainder term 
	$\widetilde R=\widetilde R_\tau$ satisfies
	\[
\norm{\widetilde R_\tau}_{H^{K}(M)}\lesssim \tau^{-N}.
	\]
\end{lem}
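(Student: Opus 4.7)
The plan is to mimic the proof of Lemma \ref{2nd_lin_sol_free_bndr} essentially verbatim, with the added book-keeping required by the fact that the right-hand side of \eqref{eq:3nd_lin_sol_fixed_bndr} is already a triple product, with two of its three factors being WKB expansions built in Lemma \ref{2nd_lin_sol_free_bndr} rather than genuine CGOs. Concretely, I would first insert into the source term $-2q(v^{(1)}w^{(23)}+v^{(2)}w^{(13)}+v^{(3)}w^{(12)})$ the representations $v^{(k)}=e^{\pm s_k x_1}(\tau^{(n-2)/8}e^{\mathbf{i}s_k\psi_k}a^{(k)}_\tau+r^{(k)}_\tau)$ and $w^{(ij)}=w_0^{(ij)}+e^{\tau\Psi^{(ij)}}R^{(ij)}$. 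Collecting the principal (polynomial-in-$1/\tau$) factors and using that the remainders $r^{(k)}_\tau$ and $R^{(ij)}$ satisfy Sobolev bounds of order $\tau^{-N}$ for $N'$ large (after pairing with the estimate \eqref{eq:latter_estimate}), one obtains the decomposition
\[
-2q\bigl(v^{(1)}w^{(23)}+v^{(2)}w^{(13)}+v^{(3)}w^{(12)}\bigr)=\tau^{\frac{3(n-2)}{8}}e^{\tau\widetilde\Psi}\bigl(e^{\widetilde\Lambda}H+\rho\bigr),
\]
with $H=\sum_{k=2}^{3N'-1}\tau^{-k}H_{-k}$, $\rho=\mathcal{O}_{H^K(M)}(\tau^{-N})$, and $H_{-2}$ given by \eqref{eq:H0_comp_orig}. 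A short inspection shows that $H$ is supported in a small neighborhood of the triple intersection points of $\gamma_1,\gamma_2,\gamma_3$, because elsewhere at least one of the three factors vanishes by the assumption on the supports of the amplitudes $a^{(k)}$.

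Next I would postulate the ansatz $\omega_0:=\tau^{\frac{3(n-2)}{8}}e^{\tau\widetilde\Psi}e^{\widetilde\Lambda}B$ with $B$ as in \eqref{expansion_formula_new_B}. A direct computation analogous to the one leading to \eqref{transport_equation_new} gives
\[
(\Delta-V)\bigl(e^{\tau\widetilde\Psi}e^{\widetilde\Lambda}B\bigr)=e^{\tau\widetilde\Psi}\Bigl(\tau^2\langle\nabla\widetilde\Psi,\nabla\widetilde\Psi\rangle e^{\widetilde\Lambda}B+\tau\bigl[2\langle\nabla(e^{\widetilde\Lambda}B),\nabla\widetilde\Psi\rangle+e^{\widetilde\Lambda}B\,\Delta\widetilde\Psi\bigr]+(\Delta-V)(e^{\widetilde\Lambda}B)\Bigr).
\]
Matching powers of $\tau$ yields the recursion \eqref{transport_equation_new_B} for the $B_{-k}$. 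The key algebraic point is that division by $\langle\nabla\widetilde\Psi,\nabla\widetilde\Psi\rangle$ is legitimate on the support of the $H_{-k}$ by the standing assumption; by iteration, all $B_{-k}$ are well-defined smooth functions supported in the same region, and they depend on $q$ only through pointwise values of $q$ and its derivatives at the triple intersection points.

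With $B$ defined by \eqref{transport_equation_new_B}, a telescoping argument identical to \eqref{eq:telescopic_sum} shows
\[
(\Delta-V)\omega_0+2q\bigl(v^{(1)}w^{(23)}+v^{(2)}w^{(13)}+v^{(3)}w^{(12)}\bigr)=e^{\tau\widetilde\Psi}\,\mathcal{O}_{H^K(M)}(\tau^{-N}),
\]
provided $N'=N'(K,N)$ is chosen sufficiently large. Writing $\omega=\omega_0+e^{\tau\widetilde\Psi}\widetilde R$, the correction $\widetilde R$ must solve a conjugated equation whose right-hand side is $\mathcal{O}_{H^K(M)}(\tau^{-N})$. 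In the case $\pm c_1\pm c_2\pm c_3\neq 0$ the linear phase in $\widetilde\Psi$ is limiting Carleman, so the Carleman estimate of \cite{DosSantosFerreira2009} produces such a $\widetilde R$ with $\|\widetilde R\|_{H^K(M)}\lesssim\tau^{-N}$ (after redefining $N'$). In the degenerate case $\pm c_1\pm c_2\pm c_3=0$, we instead impose zero boundary data and invoke standard elliptic estimates, which applies since $V$ is not a Dirichlet eigenvalue.

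The only point that requires any care beyond copying the proof of Lemma \ref{2nd_lin_sol_free_bndr} is verifying the Sobolev bound on $\rho$: the worst terms come from products of a remainder $R^{(ij)}$ (which, by Lemma \ref{2nd_lin_sol_free_bndr}, is $\mathcal{O}_{H^K(M)}(\tau^{-N})$) with an unscaled CGO factor carrying powers $\tau^{(n-2)/8+K}$. Balancing these bounds fixes the ultimate choice of $N'$ in terms of $N,K$, and the claim follows. No separate analysis of the Carleman weight is needed for this step because it is absorbed into the $e^{\tau\widetilde\Psi}$ prefactor on the right-hand side.
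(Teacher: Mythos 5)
Your proposal is correct and takes essentially the same route the paper intends: the paper explicitly omits the proof, stating that it is ``a direct adaptation of the proof of Lemma \ref{2nd_lin_sol_free_bndr},'' and what you have written is exactly that adaptation---insert the representations of $v^{(k)}$ and $w^{(ij)}$, rewrite the source as $\tau^{\frac{3(n-2)}{8}}e^{\tau\widetilde\Psi}(e^{\widetilde\Lambda}H+\rho)$, run the WKB recursion \eqref{transport_equation_new_B} under the hypothesis $\langle\nabla\widetilde\Psi,\nabla\widetilde\Psi\rangle\neq 0$ on the relevant support, telescope, and close with the Carleman estimate of \cite{DosSantosFerreira2009} when $\pm c_1\pm c_2\pm c_3\neq 0$ (elliptic estimates otherwise). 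The only cosmetic imprecision is in the last paragraph, where the normalization power for the CGO factors should account for the full product of three quasimode prefactors (so the relevant weight is $\tau^{\frac{3(n-2)}{8}}$, as you use elsewhere, rather than $\tau^{\frac{n-2}{8}}$ per factor stated in isolation); this does not affect the argument since $N'$ is chosen large depending on $K,N$ precisely to absorb such powers.
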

As stated, the amplitude $B$ depends on $q$ only in terms of the pointwise values $q$ and its derivatives. Thus, by assuming that we know the DN map of $(-\Delta+V)u + qu^2=0$, we may determine the value of $\omega_0$ on the boundary by  boundary determination result (Proposition \ref{Prop_boundary_det_bodytext}). Consequently, by using the Carleman estimate with boundary terms (Lemma \ref{lem0}), we have the following analogous result of Proposition \ref{2nd_lin_sol_fixed_bndr}. Note that 
$$
{\langle \nabla \widetilde\Psi, \nabla \widetilde\Psi\rangle }=(\pm c_1\pm c_2\pm c_3)^2-\abs{c_1\nabla'\psi_1+c_2\nabla'\psi_2+c_3\nabla'\psi_3}^2.
$$

\begin{prop}\label{3rd_lin_sol_fixed_bndr}
Assume as in Lemma \ref{3rd_lin_sol_free_bndr} and adopt its notation. Assume additionally that $\Lambda_{q_1}=\Lambda_{q_2}$. If the restrictions of the amplitudes $a^{(k)}$ of $v^{(k)}$ to $M_0$ are supported in small enough neighborhoods of the geodesics $\gamma_k$, and $N'=N'(K,N)$ is large enough, then  the third linearized equations 
\begin{align}
(-\Delta_{g}+V)\omega^{(\beta)}=-2q_\beta\left( v^{(1)} w_\beta^{(23)} + v^{(2)}w_\beta^{(13)} + v^{(3)} w_\beta^{(12)}\right) 
		&\text{ in }M,
    \end{align}
    where $w_\beta^{(ik)}$, for $\beta=1,2$ and different $i,k=1,2,3$, are given as in Proposition \ref{2nd_lin_sol_fixed_bndr}. Moreover, the solution $\omega^{(\beta)}$ is of the form 
\[
 \omega^{(\beta)}=\omega_0^{(\beta)}+ e^{\tau \widetilde\Psi}\widetilde R^{(\beta)}
 ,
\]
where
\begin{align*}
\omega_0^{(\beta)}&=\tau^{\frac{3(n-2)}{8}}e^{\tau \widetilde\Psi}e^{\widetilde\Lambda}B^{(\beta)}, \\
B^{(\beta)}&=\tau^{-4}B^{(\beta)}_{-4}+\cdots+ \tau^{-3N'+1}B^{(\beta)}_{-3N'+1}.
\end{align*}
Here $\widetilde\Lambda$ and $\widetilde\Psi$ are given by \eqref{eq:tildePsi}. 
Especially, the quantity $B_{-4}$ in \eqref{eq:B_2} can be written as 
\begin{align}\label{eq:H0_comp}
   \begin{split}
   	 B^{(\beta)}_{-4}=&4q_\beta^3\s a_0^{(1)}a_0^{(2)}a_0^{(3)}\frac{1}{(\pm c_1\pm c_2\pm c_3)^2-\abs{c_1\nabla'\psi_1+c_2\nabla'\psi_2+c_3\nabla'\psi_3}^2}\\
   	&\quad \times \Bigg(\frac{1}{(\pm c_1\pm c_2)^2-\abs{c_1\nabla'\psi_1+c_2\nabla'\psi_2}^2} \\
   	&\quad \qquad +\frac{1}{(\pm c_1\pm c_3)^2-\abs{c_1\nabla'\psi_1+c_3\nabla'\psi_3}^2} \\
   	&\quad \qquad +\frac{1}{(\pm c_2\pm c_3)^2-\abs{c_2\nabla'\psi_2+c_3\nabla'\psi_3}^2}\Bigg). 
   \end{split}
	\end{align}
and $\widetilde R^{(\beta)}=\mathcal{O}_{L^2(M)}(\tau^{-N})$, for $\beta=1,2$. Moreover 
\[
  \omega^{(1)}\big|_{\p M}= \omega^{(2)}\big|_{\p M}.
\]
\end{prop}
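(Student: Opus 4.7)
The strategy is a direct adaptation of the proof of Proposition~\ref{2nd_lin_sol_fixed_bndr} with the second linearized equation replaced by the third. First, I would apply Lemma~\ref{3rd_lin_sol_free_bndr} with $q = q_2$, using the second-order pieces $w_2^{(ik)}$ furnished by Proposition~\ref{2nd_lin_sol_fixed_bndr}, to obtain a solution $\omega^{(2)} = \omega_0^{(2)} + e^{\tau\widetilde\Psi}\widetilde R^{(2)}$ for which $\widetilde R^{(2)}$ extends smoothly past $M$ with $\|\widetilde R^{(2)}\|_{H^K(U)} \lesssim \tau^{-N}$ on an open extension $U$ (cf.\ Remark~\ref{rem:second_lin_lemma}). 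Enlarging $K$ by $5/2$ (and correspondingly enlarging $N'$), the trace theorem then gives $\p_\nu^{\s \ell}\widetilde R^{(2)}|_{\p M} = \mathcal{O}_{H^K(\p M)}(\tau^{-N})$ for $\ell = 0, 1, 2$. Standard elliptic theory next produces a unique solution $\omega^{(1)}$ of the third linearized equation for $q_1$ with the Dirichlet condition $\omega^{(1)}|_{\p M} = \omega^{(2)}|_{\p M}$; writing $\omega^{(1)} = \omega_0^{(1)} + e^{\tau\widetilde\Psi}\widetilde R^{(1)}$, the matching of Dirichlet data is then built in by construction.

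The key new input, beyond the constructive Lemma~\ref{3rd_lin_sol_free_bndr}, is the set of boundary determination consequences of $\Lambda_{q_1} = \Lambda_{q_2}$. By Proposition~\ref{Prop_boundary_det_bodytext}, equality of DN maps forces $q_1 = q_2$ on $\p M$ together with all transversal derivatives. Since the recursion \eqref{transport_equation_new_B} defining the amplitudes $B^{(\beta)}_{-k}$ depends on $q_\beta$ only through its pointwise jet (and since $w_1^{(ik)}$, $w_2^{(ik)}$ agree on $\p M$ by Proposition~\ref{2nd_lin_sol_fixed_bndr}), this yields $\p_\nu^{\s \ell}\omega_0^{(1)}|_{\p M} = \p_\nu^{\s \ell}\omega_0^{(2)}|_{\p M}$ for $\ell = 0, 1, 2$. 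The third-order differentiation of the DN-map identity in parameters $\eps_i,\eps_j,\eps_k$ of $f_\eps$ (with $f_{ijk}$ chosen so that $\omega^{(\beta)}$ are indeed the third linearizations of the nonlinear solutions with this boundary datum) forces $\p_\nu\omega^{(1)}|_{\p M} = \p_\nu\omega^{(2)}|_{\p M}$, and the second assertion of Proposition~\ref{Prop_boundary_det_bodytext} propagates the matching to $\p_\nu^2$. Subtracting the ansatz from the solution then yields $\p_\nu^{\s \ell}\widetilde R^{(1)}|_{\p M} = \mathcal{O}_{H^K(\p M)}(\tau^{-N})$ for $\ell = 0, 1, 2$.

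To close the argument, the conjugated equation
\[
e^{-\tau\widetilde\Psi}(\Delta-V)e^{\tau\widetilde\Psi}\widetilde R^{(1)} = \mathcal{O}_{H^K(M)}(\tau^{-N})
\]
follows from the telescoping WKB cancellation that mirrors \eqref{eq:telescopic_sum} applied to three CGO factors, together with the small interaction remainder $\rho$ in \eqref{eq:tildePsi}. In the case $\pm c_1 \pm c_2 \pm c_3 \neq 0$, invoking the Carleman estimate with boundary terms (Lemma~\ref{lem0}) with linear weight proportional to $x_1$, together with the Sobolev embedding $H^{(n+1)/2}(\p M) \hookrightarrow L^\infty(\p M)$ applied to the boundary traces above and a harmless redefinition of $N$, yields $\|\widetilde R^{(1)}\|_{L^2(M)} = \mathcal{O}(\tau^{-N})$. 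In the degenerate case $\pm c_1 \pm c_2 \pm c_3 = 0$, I would instead appeal to Remark~\ref{rem:second_lin_lemma} to insist from the start that $\widetilde R^{(2)}$ has zero boundary values; the matching $\omega_0^{(1)}|_{\p M} = \omega_0^{(2)}|_{\p M}$ then forces $\widetilde R^{(1)}|_{\p M} = 0$, and standard elliptic estimates finish the job.

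The principal technical obstacle is the careful tracking of Sobolev indices so that the various trace losses, the loss of one power of $\tau$ in the Carleman estimate, and the interaction remainder $\rho$ of \eqref{eq:tildePsi} can all be simultaneously absorbed by taking $N'$ in Lemma~\ref{3rd_lin_sol_free_bndr} sufficiently large. A subsidiary subtlety is the identification of $\omega^{(2)}|_{\p M}$ with a legitimate third-order Dirichlet datum $f_{ijk}$ in the nonlinear family $f_\eps$ of \eqref{f_epsilon}; once this identification is made, the passage from equality of DN maps to equality of Neumann traces of $\omega^{(1)}$ and $\omega^{(2)}$ is exactly the same as in the proof of Proposition~\ref{2nd_lin_sol_fixed_bndr}.
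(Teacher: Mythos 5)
Your proof is correct and follows essentially the same approach the paper takes, namely a direct adaptation of the proof of Proposition~\ref{2nd_lin_sol_fixed_bndr} in which $w$ is replaced by $\omega$, $\Psi$ by $\widetilde\Psi$, and the same combination of boundary determination (Proposition~\ref{Prop_boundary_det_bodytext}) and the boundary-term Carleman estimate (Lemma~\ref{lem0}) closes the argument. One minor correction: since the corrections $R_1^{(jk)}$ of the second-order pieces $w_1^{(jk)}$ furnished by Proposition~\ref{2nd_lin_sol_fixed_bndr} are controlled only in $L^2(M)$, the residual in the conjugated equation for $\widetilde R^{(1)}$ should be $\mathcal{O}_{L^2(M)}(\tau^{-N})$ rather than $\mathcal{O}_{H^K(M)}(\tau^{-N})$ — this is precisely the replacement of $F$ in \eqref{eq:F} the paper flags — but as Lemma~\ref{lem0} only uses the $L^2$ norm of the conjugated source, your final $L^2$ bound on $\widetilde R^{(1)}$ is unaffected.
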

We skip the proof of Proposition \ref{3rd_lin_sol_fixed_bndr} as it can be obtained from the proof of Proposition \ref{2nd_lin_sol_fixed_bndr} by replacing $w$ by $\omega$ and $\Psi$ by $\widetilde\Psi$ etc. The function $F$ in \eqref{eq:F} in the proof also needs to be replaced by a function of the class $\mathcal{O}_{L^2(M)}(\tau^{-N})$ since $R^{(2)}$ in Proposition \ref{2nd_lin_sol_fixed_bndr} is $\mathcal{O}_{L^2(M)}(\tau^{-N})$. We remark that by deriving Carleman estimates similar to those in Lemma \ref{lem0} for higher Sobolev spaces, we could in fact have that $\widetilde R$ is of the size $\tau^{-N}$ also in higher Sobolev spaces $H^K(M)$ by taking $N'$ large enough.
%

\section{Proof of Theorem \ref{t1}}
\label{Sec:proof_of_t1}
In this section we prove Theorem \ref{t1}.  We will see that it is possible to deduce 
\[
q_1^2=q_2^2 
\]
in $M$ from third order linearizations and the DN map of the equation $(-\Delta +V)u+qu^2=0$. Our method for the third linearized equation however does not imply $q_1=q_2$ in general.  To show that 
\[
q_1=q_2 
\]
we in fact need to consider fourth order linearized equations. To give a proof of Theorem \ref{t1}, we could consider the fourth order linearization from the beginning. However, we first consider third order linearizations and prove $q_1^2=q_2^2$ to better explain the main ideas of the proof.

\subsection{Proof of $q_1^2=q_2^2$} Let $p_0\in M_0$, and let $\gamma_1$ be a non-tangential geodesic that has no self-intersections. 
We consider the equation
\begin{align}\label{equ_integral id0_proof_Sec}
	\begin{cases}
		(-\Delta +V)u_a +q_a u_a^2=0 & \text{ in }M,\\
		u_a =f &\text{ on }\p M,
	\end{cases}
\end{align}
for $a=1,2$, where $f=f_\eps\in C^{2,\alpha}(\p M)$ is of the form 
\begin{align}\label{f_epsilon_1}
	f_\eps:=\displaystyle \sum_{i=1}^4\eps_i f_i+\sum_{i,j =1}^4 \eps_i \eps_j f_{ij}\quad \text{ on }\quad \p M.
\end{align}

Let us recall the linearizations \eqref{equ_integral id0_proof_Sec} from Section \ref{sec:higher_ord_lin_with_bndr_vals}. The first linearization reads 
\begin{align}\label{equ_integral first id0}
\begin{cases}
		(-\Delta + V)v_\beta^{(i)}=0 & \text{ in }M,\\
		v_\beta^{(i)}=f_i    & \text{ on }\p M,
\end{cases}
\end{align}
where $v_\beta^{(i)}=\left. \p_{\eps_i} \right|_{\eps=0} u_\beta$ for $\beta=1,2$, and $i=1,2,3,4$.
By the uniqueness of solutions to \eqref{equ_integral first id0}, we obtain 
\[
v^{(i)}:=v_1^{(i)}=v_2^{(i)} \text{ in }M, 
\]
for $i=1,2,3,4$. The second linearization of \eqref{equ_integral id0_proof_Sec} satisfies 
\begin{align}\label{equ_integral second id0}
	\begin{cases}
		(-\Delta +V)w_\beta^{(jj)} =-2q_\beta v^{(j)}v^{(j)}& \text{ in }M,\\
		w_\beta^{(jj)}=f_{jj} &\text{ on }\p M,
	\end{cases}
\end{align}
where 
\[
w_\beta^{(jj)}=\left. \p^2 _{\eps_j \eps_j}\right|_{\eps=0} u_\beta, 
\]
 for $\beta=1,2$ and different $i,j\in \{1,2,3\}$. Lastly, the third linearization of \eqref{equ_integral id0_proof_Sec} satisfies
\begin{align}\label{equ_integral third id0}
	\begin{cases}
		(-\Delta +V)w_\beta^{(ijk)} =-2q_\beta \left( v^{(i)} w_\beta^{(jk)} + v^{(j)}w_\beta^{(ik)} + v^{(k)} w_\beta^{(ij)}\right)  & \text{ in }M,\\
		w_\beta^{(ikl)} =0 &\text{ on }\p M,
	\end{cases}
\end{align}
where 
\[
w_\beta^{(ijk)}=\left. \p^3 _{\eps_i \eps_j\eps_k}\right|_{\eps=0} u_\beta.
\]
Since $\Lambda_{q_1}=\Lambda_{q_2}$
 \begin{align}\label{equal DN with derivatives}
 	\begin{split}
 		&0=\left. \p^3_{\eps_i \eps_j \eps_k}\right|_{\eps=0}\left( \Lambda_{q_1}-\Lambda_{q_2}\right)(f_\eps) \\
 	\end{split}
 \end{align}
 Thus, by Lemma \ref{Lem:Integral identities} we have  
	\begin{align}\label{integral id for 3rd linearization}
		\begin{split}
		&0=\int_{M} \left\{ q_1\left( v^{(i)} w_1^{(jk)} + v^{(j)}w_1^{(ik)} + v^{(k)} w_1^{(ij)}\right)  \right. \\
			       & \qquad \qquad \s \left. - q_2\left( v^{(i)} w_2^{(jk)} + v^{(j)}w_2^{(ik)} + v^{(k)} w_2^{(ij)}\right)  \right\} v^{(l)}\, dV,
		\end{split}
	\end{align}
where $v^{(i)}$ and $w_\beta^{(jk)}$ are the solutions of \eqref{equ_integral first id0} and \eqref{equ_integral second id0}, respectively, for different $i,j,k=1,2,3,4$ and $\beta=1,2$.

We choose $v^{(i)}$  to be CGOs corresponding geodesics on $(M_0,g_0)$, which intersect properly pairwise. We show that the integrand on the right hand side of \eqref{integral id for 3rd linearization} restricted to a neighborhood of $p_0$ in $M_0$ is close to a multiple of the delta function. We let $v^{(1)}$ correspond to the geodesic $\gamma_1$ and choose the other $3$ geodesics next. 
%

\subsection{Choices of initial vectors for the third linearization}\label{sec:choice_of_vecs_3rd_lin}
Let $\delta\in (0,1)$, and we denote the initial data of $\gamma_1$ by $\xi_1\in S_{p_0}M_0$. By perturbing $\xi_1$, we find $\xi_2\in S_{p_0}M_0$ such that the associated geodesic $\gamma_2$ is also non-tangential, has no self-intersections, and that
\[
\abs{\xi_1}=\abs{\xi_2}=1
\]
and 
\[
\langle \xi_1 , \xi_2 \rangle =1-\delta.
\]
Let us define
\[
\xi_3=-\frac{1}{1+\delta}(\xi_1+\delta \xi_2)\in S_{p_0}M_0 \quad \text{ and } \quad \xi_4=-\frac{1}{1+\delta}(\delta\xi_1 + \xi_2)\in S_{p_0}M_0.
\]
A direct computation shows 
\begin{align}\label{eq:real_part_of_sum_xi}
	\sum_{l=1}^4 \xi_l&=\xi_1+\xi_2-\frac{1}{1+\delta}\xi_1-\frac{\delta}{1+\delta} \xi_2-\frac{\delta}{1+\delta}\xi_1-\frac{1}{1+\delta} \xi_2 =0.
\end{align}
We redefine $\delta$ smaller, if necessary, so that the geodesics $\gamma_3$ and $\gamma_4$ corresponding to $\xi_3$ and $\xi_4$ are also nontangential.

Note that $\xi_1$ is not proportional to $\xi_2$ as $\xi_1$ and $\xi_2$ are linearly independent. Similarly, for $k=3,4$, the vector $\xi_k$ is neither proportional to $\xi_1$ nor to $\xi_2$. Lastly, $\xi_3$ is not proportional to $\xi_4$. Indeed, if $A\in \R$ is such that $\xi_3=A \xi_4$, we have that $1=\delta A$ and $\delta = A$, implying that $\delta =\pm 1$. However, $\delta\in (0,1)$. This means that all the pairs of the geodesics corresponding to initial data $\xi_k/\abs{\xi_k}$ intersect properly. 

Note also that since $\abs{\xi_1}=\abs{\xi_2}=1$, we have
\begin{align*}
	\abs{\xi_3}^2=&\frac{1}{(1+\delta)^2}\left(\abs{\xi_1}^2+\delta^2\abs{\xi_2}^2+2\delta \langle \xi_1, \xi_2\rangle\right)\\
	=&\frac{1}{(1+\delta)^2}\left(\abs{\xi_2}^2+\delta^2\abs{\xi_1}^2+2\delta \langle\xi_2, \xi_1\rangle\right)
	=\abs{\xi_4}^2.
\end{align*}
That is 
\begin{equation}\label{eq:norm_xi3_xi4}
 \abs{\xi_3}^2=\abs{\xi_4}^2=\frac{1}{(1+\delta)^2}\left(1+\delta^2+2\delta (1-\delta)\right)=\frac{1}{(1+\delta)^2}\left(1+2\delta -\delta^2\right)
\end{equation}


Let us then define vectors $\overline{\xi}_k\in TM$, $k=1,2,3,4$, by
\begin{align}\label{eq:vecs_olxia}
\begin{split}
 \overline{\xi}_1&=\abs{\xi_1}e_1+\mathbf{i}\xi_1, \quad \overline{\xi}_2=-\abs{\xi_2}e_1+\mathbf{i}\xi_2 \\
 \overline{\xi}_3&=\abs{\xi_3}e_1+\mathbf{i}\xi_3, \quad \overline{\xi}_4=-\abs{\xi_4}e_1+\mathbf{i}\xi_4.
 \end{split}
\end{align}
Then 
\begin{equation}\label{eq:lightlike_sums_to_zero}
 \sum_{k=1}^4\overline{\xi}_k=0.
\end{equation}
Note also that
\begin{equation}\label{eq:xibar_lighlike}
 \langle \overline{\xi}_k,\overline{\xi}_k\rangle=0, \quad k=1,\ldots,4.
\end{equation}
Related to these vectors $\overline{\xi}_k$, we will consider in the proof of Theorem \ref{t1}  CGOs, which can be written of the form
\begin{align*}
 v_{s}^{(k)}&= e^{\text{Re}(\overline{\xi}_k)  x_1} \left(\tau^{\frac{n-2}{8}}e^{\mathbf{i} \abs{\xi_k}\psi_k} a_s + r_s \right).
\end{align*}
Here the phase functions $\psi_k$ are constructed with respect to the geodesics $\gamma_k$ with initial data $\gamma_k(0)=p_0$ and $\dot \gamma_k(0)=\frac{\xi_k}{\abs{\xi_k}}$. We note  that 
\begin{equation}\label{eq:nabla_on_geo}
 \nabla \Big(\text{Re}(\overline{\xi}_k)  x_1+\mathbf{i} \abs{\xi_k}\psi_k)\Big)\Big|_{\gamma_k(0)}=\overline \xi_k.
\end{equation}
Consequently, the ansatzes $w_0^{(ik)}$ in Lemma \ref{2nd_lin_sol_free_bndr} for the solutions of $(-\Delta +V)w^{(ik)}=-2qv^{(i)}v^{(k)}$ have amplitudes with a factor that divides by
\begin{align*}
 &\left\langle\nabla \big(\text{Re}(\overline{\xi}_i+\overline{\xi}_k)  x_1+\mathbf{i}( \abs{\xi_i}\psi_i+ \abs{\xi_k}\psi_k)\big),\nabla \big(\text{Re}(\overline{\xi}_i+\overline{\xi}_k)  x_1+\mathbf{i}( \abs{\xi_i}\psi_i+ \abs{\xi_k}\psi_k)\big)\right\rangle,
\end{align*}
for different $i,k=1,2,3$. 
At an intersection point of the geodesics $\gamma_i$ and $\gamma_k$ the above equals
\begin{align*}
 &\langle \overline{\xi}_i+\overline{\xi}_k,\overline{\xi}_i+\overline{\xi}_k\rangle =2\left\langle\nabla \big(\text{Re}(\overline{\xi}_i)  x_1+\mathbf{i} \abs{\xi_i}\psi_i\big),\nabla \big(\text{Re}(\overline{\xi}_k)  x_1+\mathbf{i} \abs{\xi_k}\psi_k\big)\right\rangle = 2\left\langle \overline \xi_i, \overline \xi_k\right\rangle.
\end{align*}
Here we used \eqref{eq:nabla_on_geo} and \eqref{eq:xibar_lighlike}. Motivated by this, we define
\[
 \mathbf{C}_{ik}:=2\left\langle \overline \xi_i, \overline \xi_k\right\rangle.
\]
The coefficient $\mathbf{C}_{ik}$ can be collectively  written as
\[
 \mathbf{C}_{ik}=2\abs{\xi_i}\abs{\xi_k}\left((-1)^{i+k}-\frac{\left\langle \xi_i, \xi_k\right\rangle}{\abs{\xi_i}\abs{\xi_k}}\right).
\]

We calculate expansions for $\mathbf{C}_{ik}$ for small $\delta>0$ parameter. A direct computation shows that
\[
\langle \xi_1 ,\xi_3 \rangle=\left\langle \xi_1, -\frac{1}{1+\delta}(\xi_1+\delta \xi_2)\right\rangle=-\frac{1}{1+\delta}(\abs{\xi_1}^2+\delta \langle\xi_1, \xi_2\rangle)=-\frac{1}{1+\delta}(1+\delta  -\delta^2)
\]
and
\[
\langle  \xi_2 ,\xi_3\rangle=-\frac{1}{1+\delta}\left(\langle \xi_1,\xi_2 \rangle+\delta |\xi_2|^2\right)=-\frac{1}{1+\delta},
\]
where we used $\langle \xi_1,\xi_2\rangle=1-\delta$.
We also have
\[
 \frac{\langle \xi_1 ,\xi_3\rangle}{\abs{\xi_1}\abs{\xi_3}}=\frac{-\frac{1}{1+\delta}(1+\delta  -\delta^2)}{\left(\frac{1}{(1+\delta)^2}\left(1+2\delta -\delta^2\right)\right)^{1/2}}=-1+\mathcal{O}(\delta)
\]
and
\[
 \frac{\langle \xi_2 ,\xi_3\rangle}{\abs{\xi_2}\abs{\xi_3}}=\frac{-\frac{1}{1+\delta}}{\left(\frac{1}{(1+\delta)^2}\left(1+2\delta -\delta^2\right)\right)^{1/2}}=-1+\mathcal{O}(\delta).
\]
Here we have utilized the Taylor expansions
\[
(1+r)^{1/2}=1+r/2+\mathcal{O}(r^2) \quad  \text{ and } \quad  (1+r)^{-1}=1-r+\mathcal{O}(r^2),
\]
which hold  for small $|r|$. Combining the above formulas yields
\begin{align}\label{C_12, C_23, C_13}
	\begin{split}
		\mathbf{C}_{12}&=2\abs{\xi_1}\abs{\xi_2}\left(-1-\frac{\langle \xi_1 ,\xi_2\rangle}{\abs{\xi_1}\abs{\xi_2}}\right)=-4+\mathcal{O}(\delta),\\
			\mathbf{C}_{13}&=2\abs{\xi_1}\abs{\xi_3}\left(1-\frac{\langle \xi_1 ,\xi_3\rangle}{\abs{\xi_1}\abs{\xi_3}}\right)=2\frac{\left(1+2\delta -\delta^2\right)^{1/2}}{1+\delta}\Big(1-\big(-1+\mathcal{O}(\delta)\big)\Big) \\
			&=4+\mathcal{O}(\delta), \\
			\mathbf{C}_{23}&=2\abs{\xi_2}\abs{\xi_3}\left(-1-\frac{\langle \xi_2 ,\xi_3\rangle}{\abs{\xi_2}\abs{\xi_3}}\right) =2\frac{\left(1+2\delta -\delta^2\right)^{1/2}}{1+\delta}\Big(-1-\big(-1+\mathcal{O}(\delta)\big)\Big)\\
			&=\mathcal{O}(\delta).
	\end{split}
\end{align}
We also remark here that $\mathbf{C}_{ik}\neq 0$, $i\neq k$, for $\delta>0$ since 
\begin{equation}\label{eq:Cik_not_zero}
 \abs{\mathbf{C}_{ik}}=2\abs{\xi_i}\abs{\xi_k}\left|(-1)^{i+k}-\frac{\langle \xi_i ,\xi_i\rangle}{\abs{\xi_i}\abs{\xi_k}}\right|
\end{equation}
and 
\[
 \frac{\langle \xi_i ,\xi_k\rangle}{\abs{\xi_2}\abs{\xi_3}}\in (-1,1),
\]
because the pairs of vectors $\xi_i$ and $\xi_k$ are linearly independent. Finally, we note that 
\begin{align*}\label{inverse summation of third linearization}
	\begin{split}
		\left| \frac{1}{\mathbf{C}_{12}}+\frac{1}{\mathbf{C}_{13}}+	\frac{1}{\mathbf{C}_{13}}\right| =\left|\frac{1}{-4+\mathcal{O}(\delta)} +\frac{1}{4+\mathcal{O}(\delta)}+\frac{1}{\mathcal{O}(\delta)}\right|\to \infty,
	\end{split}
\end{align*}
when $\delta\to 0$. Thus 
\begin{equation}\label{eq:third_lin_coef_nonzero}
\frac{1}{\mathbf{C}_{12}}+\frac{1}{\mathbf{C}_{13}}+	\frac{1}{\mathbf{C}_{13}}\neq 0 ,
\end{equation}
for all small enough $\delta>0$.

\begin{rmk}
 Let us define a Lorentz metric $\eta$ for $M$ by the formula 
 \[
  \eta(c_1e_1+V_1,c_2e_1+V_2):=\langle c_1e_1+\mathbf{i}V_1,c_2e_1+\mathbf{i}V_2\rangle,
 \]
where $c_1,c_2\in \R$ and $V_1,V_2\in TM_0$. We required that the vectors $\ol{\xi}_1,\ldots,\ol{\xi}_4$ in \eqref{eq:vecs_olxia} are lightlike vectors with respect to $\eta$ and sum up to $0$. The former requirement is because the corresponding phase functions need to satisfy the complex eikonal equation. The latter requirement is discussed in the next section.

A fact is that three $\eta$-lightlike vectors can only sum up to $0$, if the parts in $TM_0$ of two of them are linearly dependent. This would correspond to geodesics that do not intersect properly. Due to this geometric fact,  we overdifferentiate in this paper the nonlinearity $qu^2$ to obtain integral identities that consider more than three CGOs.
\end{rmk}

\subsection{Proof of $q_1^2=q_2^2$ (continued)}\label{sec:proof_third_lin_continued}
Let us then return to proving $q_1^2=q_2^2$. Let $\overline{\xi}_k$, $k=1,2,3,4$, be as in \eqref{eq:vecs_olxia}. We set
\[
 c_k=\abs{\xi_k} \text{ and } 
\]
and
\[
 s_1=c_1\tau+\mathbf{i}\lambda \quad \text{ and } \quad s_\ell=c_\ell\tau, \quad \text{ for } \quad  \ell=2,3,4.
\]
Then the corresponding CGOs are of the form
\begin{align*}
 v^{(1)}&= e^{(\abs{\xi_1}\tau+\mathbf{i}\lambda)  x_1} \left(\tau^{\frac{n-2}{8}}e^{\mathbf{i} (\abs{\xi_1}\tau+\mathbf{i}\lambda)\psi_1} a_1 + r_1 \right), \\
 v^{(2)}&= e^{-\abs{\xi_2}\tau x_1} \left(\tau^{\frac{n-2}{8}}e^{\mathbf{i}\abs{\xi_2}\tau \psi_2} a_2 + r_2 \right),\\
 v^{(3)}&= e^{\abs{\xi_3}\tau x_1} \left(\tau^{\frac{n-2}{8}}e^{\mathbf{i}\abs{\xi_3}\tau \psi_3} a_3 + r_3 \right),\\
 v^{(4)}&= e^{-\xi_4 \tau x_1} \left(\tau^{\frac{n-2}{8}}e^{\mathbf{i}\abs{\xi_4}\tau \psi_4} a_4 + r_4 \right).
\end{align*}

We may assume that $v^{(k)}$, $k = 1,\ldots, 4$ are supported in small enough neighborhoods of the corresponding geodesics $\gamma_k$ so that the mutual support of $v^{(k)}$ belongs to neighborhoods of the points where all the geodesics $\gamma_k$ intersect and where any pair of the geodesics intersect only once. Let us denote the points where all the geodesics $\gamma_k$ intersect by $p_0,p_1,\ldots, p_Q$.

Let $i\neq j\in \{1,2,3,4\}$, $i\neq j$ and $\beta=1,2$. By assumption, the DN maps of the equation \eqref{pf} for the potentials $q_1$ and $q_2$ satisfy $\Lambda_{q_1}=\Lambda_{q_2}$. By Proposition \ref{2nd_lin_sol_fixed_bndr} there are boundary values $f_{ij}$, which are the same for both $q_1$ and $q_2$, such that the solutions of the second linearized equations \eqref{equ_integral second id0} are of the form 
 \[
 w_\beta^{(ij)}=w_{0,\beta}^{(ij)}+ e^{\tau \Psi^{(ij)}}R_\beta^{(ij)},
 \]
 where the ingredients are as follows:
 \begin{align}\label{Psi kl_3rd}
 \begin{split}
	\Psi^{(ij)}&={((-1)^{1+i} c_i+ (-1)^{1+j}c_j) x_1+\mathbf{i}(c_i \psi_i+c_j \psi_j)}, \\
 w_{0,\beta}^{(ij)}&=\tau^{\frac{n-2}{4}}e^{((-1)^{1+i} s_i+ (-1)^{1+j}s_j) x_1+\mathbf{i}(s_i \psi_i+s_j \psi_j)}b^{(ij)}_\beta, \\
b^{(ij)}_\beta&=\tau^{-2}b^{(ij)}_{-2,\beta}+\cdots+ \tau^{-2N'}b^{(ij)}_{-2N',\beta}\\
 b_{-2,\beta}^{(ij)}&=\frac{2q_\beta}{((-1)^{1+i} c_i+(-1)^{1+j} c_j)^2-\abs{c_i\nabla'\psi_i+c_j\nabla'\psi_j}^2}a_0^{(i)}a_0^{(j)}.
 \end{split}
\end{align}
By \eqref{eq:nabla_on_geo}, at points of the form $(x_1,p_0)\in M$ we have  
\[
 b_{-2,\beta}^{(ij)}=\frac{2q_\beta}{\mathbf{C}_{ij}}a_0^{(i)}a_0^{(j)}.
\]
Here $a_0^{(i)}$ and $a_0^{(j)}$ are independent of the variable $x_1\in \R$. 

To ease the following calculations, let us denote
\[
 \Psi_{1234}=\sum_{k=1}^4\left((-1)^{1+k}c_k x_1+\mathbf{i}c_k\s \psi_k\right)= \mathbf{i}\sum_{k=1}^4 c_k \psi_k
\]
and 
\[
 \Lambda_{1234}=\lambda_1(\mathbf{i}\s x_1-\psi_1).
\]
We see that 
\begin{align}\label{eq:exponential_large_cancel_proof}
	\begin{split}
	\Psi^{(12)}+\Psi^{(34)}
	=\Psi^{(13)}+\Psi^{(24)}
	=\Psi^{(14)}+\Psi^{(23)} 
	= \mathbf{i}\sum_{k=1}^4 c_k \psi_k=\Psi_{1234}.
	\end{split}
\end{align}
Since $\Psi_{1234}$ is purely imaginary at the intersection points $p_b$, $b=0,\ldots, Q$, the exponentially large linear factors will cancel in terms of the form $v^{(i)}w_\beta^{(jk)}v^{(l)}$ appearing the integral identity for the third linearization \eqref{equ_integral third id0}. 
We also have at the intersection point $p_0$ of the geodesics that
 \begin{align}\label{eq:oscillation_cancel_proof}
	\begin{split}
	\left.\nabla(\Psi^{(12)}+\Psi^{(34)})\right|_{p_0}
	&=\left. \nabla(\Psi^{(13)}+\Psi^{(24)})\right|_{p_0}
	=\left.\nabla(\Psi^{(14)}+\Psi^{(23)})\right|_{p_0} \\
 	&=\left. \mathbf{i}\sum_{k=1}^4 c_k \nabla\psi_k\right|_{p_0}=\left.\mathbf{i}\nabla\Psi_{1234}\right|_{p_0}=\mathbf{i}\sum_{k=1}^4 \xi_k=0.
	\end{split}
\end{align}
This implies that $p_0$ is a critical point of the phase functions of functions of the form $v^{(i)}w_\beta^{(jk)}v^{(l)}$. The critical point is also nondegenerate by \eqref{phase_pr} in Section~\ref{sec:CGOs} and thus we will be able to apply stationary phase in the asymptotic parameter $\tau$.

 Let us first consider the case $p_0$ is the only point where all the geodesics $\gamma_1,\ldots,\gamma_4$ intersect. With the above preparations and using $\Lambda_{q_1}=\Lambda_{q_2}$ the integral identity \eqref{integral id for 3rd linearization} of the third order linearization reads
\begin{align}\label{eq:integral_id_final_3rd}
			0=& \int_{M} \Bigg[ q_1\left( v^{(i)} w_1^{(kl)} + v^{(k)}w_1^{(il)} + v^{(l)} w_1^{(ik)}\right)   \\
			       & \qquad\qquad\qquad\qquad\qquad  - q_2\left( v^{(i)} w_2^{(kl)} + v^{(k)}w_2^{(il)} + v^{(l)} w_2^{(ik)}\right)  \Bigg] v^{(m)}\, dV \nonumber \\
			       =&\tau^{-2}\tau^{\frac{n-2}{2}}\int_{M}e^{\tau\Psi_{1234}} \left[ e^{\Lambda_{1234}}  a_0^{(1)}a_0^{(2)}a_0^{(3)}a_0^{(4)}(q_1^2-q_2^2)\left( \mathbf{C}_{12}^{-1}+	\mathbf{C}_{13}^{-1}+\mathbf{C}_{23}^{-1}\right)  + \overline{R}\right]dV, \nonumber
\end{align}
where $\overline{R}=\mathcal{O}_{L^1(M)}(\tau^{-1})$. The factor $\tau^{-2}$ arises from the amplitude functions of the solutions $w_\beta^{(ik)}$, see \eqref{Psi kl_3rd} and the power $\frac{n-2}{2}$ of $\tau$ is the sum of $\frac{3(n-2)}{8}$ and $\frac{n-2}{8}$. By \eqref{eq:exponential_large_cancel_proof} the exponentially large factors of the integrand cancel. Recall that the dimension of $M_0$ is $n-1$. 

We multiply the integral identity \eqref{eq:integral_id_final_3rd} by $\tau^{1/2}$ and $\tau^2$. This achieves the correct normalization $\tau^{\dim(M_0)/2}$ for stationary phase.  By \eqref{eq:oscillation_cancel_proof}, at the intersection point $p_0$ of the geodesics $\gamma_k$ for $x_1\in I\subset \R$ holds
\[
 \nabla \Psi_{1234}(x_1,p_0)=0.
\]
In normal coordinates $(y^1,\ldots,y^{n-1})$ centered at the point $p_0$ in $M_0$
\[ 
 \textrm{Re}\,\Psi_{1234}(y)= \sum_{j,k=1}^{n-1}A_{jk}y^jy^k+\mathcal{O}(|y|^3),
\]
for some negative definite matrix $A$ by the properties \eqref{phase_pr} of the phase functions. Note also that
\[
\tau^{\frac{n-1}{2}}\int_{\R^{n-1}}e^{-\tau |y|^2}\,dy = \mathcal{O}(1) \quad \text{ and } \quad \tau^{\frac{n-1}{2}}\int_{\R^{n-1}} |y|e^{-\tau |y|^2}\,dy = \mathcal{O}(\tau^{-\frac{1}{2}}).
\]
Thus, stationary phase shows that the limit $\tau\to\infty$ of \eqref{eq:integral_id_final_3rd} equals
\begin{align*}
	&\left.c_A\left(a_0^{(1)}a_0^{(2)}a_0^{(3)}a_0^{(4)}\right)\right|_{p_0}\left( \mathbf{C}_{12}^{-1}+	\mathbf{C}_{13}^{-1}+\mathbf{C}_{23}^{-1}\right)\\
	& \qquad \times \int_{\R}e^{\Lambda_{1234}(x_1,p_0)}(q_1^2(x_1,p_0)-q_2^2(x_1,p_0))dx_1,
\end{align*}
where 
\[
 c_A=\int_{\R^{n-1}}e^{x\cdot A x}dx\neq 0. 
\]
We refer to \cite[Proof of Theorem 5.1, Step 4]{LLLS2019inverse} for more details on this stationary phase argument. Here we have also used that $a_0^{(k)}$, $k=1,\ldots,4$, depend only on the transversal variables. We also continued $q_1$ and $q_2$ by zero from $I$ to $\R$ in the $x_1$ variable.

The geodesics $\gamma_k$ were parametrized so that $\gamma_k(0)=p_0$. Thus $\psi_k(p_0)=0$ and we have
\[
 e^{\Lambda_{1234}(x_1,p_0)}=e^{\mathbf{i}\lambda x_1}.
\]
Since $\mathbf{C}_{12}^{-1}+\mathbf{C}_{13}^{-1}+\mathbf{C}_{23}^{-1}\neq 0$ and $a_0^{(k)}|_{\gamma_k}\neq 0$ by \eqref{eq:third_lin_coef_nonzero} and \eqref{ampl_0_pr} respectively, combining the above shows that
\begin{align*}
	&\int_{\R}e^{\lambda x_1}\left(q_1^2(x_1,p_0)-q_2^2(x_1,p_0)\right)dx_1=0.
\end{align*}
Inverting the Fourier transformation in the $x_1$ variable shows that  $q_1^2(x_1,p_0)=q_2^2(x_1,p_0)$. Since $p_0$ was arbitrary, this completes the proof in the case $p_0$ was the only point where all the geodesics intersect.

Consider then the remaining case where are several points $p_b$, $b=0,\ldots,Q$, where all all the geodesics $\gamma_1,\gamma_2, \gamma_3,\gamma_4$ intersect. 
Note also that outside (disjoint) neighborhoods $U_b$ of $p_b$ the function $e^{\tau\Psi_{1234}}$ is exponentially small. Thus, for different $i,j,k,l=1,2,3,4$, by normalizing and taking limit $\tau \to \infty$ of \eqref{eq:integral_id_final_3rd} we obtain
\begin{align}\label{eq:integral_id_final_3rd1}
\begin{split}
			&0=\lim_{\tau\to \infty} \sum_{b=0}^Q\int_{U_b} \Bigg[ q_1\left( v^{(i)} w_1^{(jk)} + v^{(j)}w_1^{(ik)} + v^{(k)} w_1^{(ij)}\right)  \\
			       &\qquad \qquad \quad \qquad \quad - q_2\left( v^{(i)} w_2^{(jk)} + v^{(j)}w_2^{(ik)} + v^{(k)} w_2^{(ij)}\right)  \Bigg] v^{(l)}\, dV  \\
			       &=\lim_{\tau\to\infty}\tau^{\frac{n-1}{2}} \sum_{b=0}^Q\int_{U_b} \Big[e^{\Lambda_{1234}} e^{\tau\Psi_{1234}} a_0^{(1)}a_0^{(2)}a_0^{(3)}a_0^{(4)}(q_1^2-q_2^2) \\
			       &\qquad\qquad\qquad\qquad\qquad \times \left( \mathbf{C}_{12}^{-1}(p_b)+	\mathbf{C}_{13}^{-1}(p_b)+\mathbf{C}_{23}^{-1}(p_b)\right)  \Big]dV. 
			       \end{split}
\end{align}
Here we have denoted
 \[
  \mathbf{C}_{ik}(p_b)=\left. \left\langle \nabla\left((-1)^{1+i}c_ix_1+\mathbf{i}c_i\s \psi_i\right), \nabla\left((-1)^{1+k}c_kx_1+\mathbf{i}c_k\s \psi_k\right)\right\rangle\right|_{p_b}\neq 0
 \]
 Note that $\mathbf{C}_{ik}(p_b)\neq 0$, since $\gamma_i$ and $\gamma_k$, $i\neq k$, intersect properly, cf. \eqref{eq:Cik_not_zero}.
Therefore, by applying stationary phase to \eqref{eq:integral_id_final_3rd1} it follows that
 \[
  \sum_{b=0}^Q \hat{h}_b(\lambda)e^{c_b\lambda}=0, \quad \lambda\in \R.
 \]
 Here $c_b$ are the distinct geodesic parameter times of $\gamma_1$ where $\gamma_1(c_b)=p_b$ and
\begin{align*}
	  \hat{h}_b(\lambda):=\mathcal{F}_{x_1\to\lambda}\left(\left.a_0^{(1)}\cdots a_0^{(4)}\left( \mathbf{C}_{12}^{-1}+	\mathbf{C}_{13}^{-1}+\mathbf{C}_{23}^{-1}\right)\right|_{p_b}\left( q_1^2(x_1,p_b)-q_2^2(x_1,p_b)\right)\right),
\end{align*}
 where $\mathcal{F}_{x_1\to\lambda}$ is the Fourier transform in $x_1$ variable.
By \cite[Lemma 6.2]{LLLS2019inverse} 
\[
h_0=\cdots=h_Q=0.
\]
Especially $q_1^2(x_1,p_0)=q_2^2(x_1,p_0)$, which concludes the proof of $q_1^2=q_2^2$ also in the case where there are several points where the geodesics $\gamma_k$ all intersect.
%
%
%

\subsection{Proof of $q_1=q_2$ and fourth order linearization}
We proved $q_1^2=q_2^2$ using third order linearizations of the equation $(-\Delta +V)u+qu^2=0$.  Here we consider fourth order linearizations of the equation and use it to complete the proof of Theorem \ref{t1}. Most of the steps here will be similar to those we used to prove $q_1^2=q_2^2$. However, the steps are somewhat more complicated.

Let $p_0\in M_0$, and let $\gamma_1$ be a non-tangential geodesic, which has no self-intersections. Let $\xi_1\in S_{p_0}M_0$ by the initial data if $\gamma_1$. Let us consider the equation
\begin{align}\label{equ_integral id0_4thlin}
	\begin{cases}
		(-\Delta +V)u_\beta +q_\beta u_\beta^2=0 & \text{ in }M,\\
		u_\beta =f &\text{ on }\p M,
	\end{cases}
\end{align}
this time with boundary values $f\in C^\infty(\p M)$ of the form \eqref{f_epsilon}, for $\beta=1,2$.
The first and second linearized equations are the same as before and read
\begin{align*}
 (-\Delta +V)v^{(i)}&=0, \\
 (-\Delta +V)w_\beta^{(ij)}&=-2q_\beta  v^{(i)}v^{(j)}.
\end{align*}
Here $v^{(i)}$ and $w_\beta^{(ij)}$, $i\neq j\in \{1,\ldots,4\}$, $\beta=1,2$, have boundary values $f_i$ and $f_{ij}$ respectively. The solutions $v^{(i)}$ are the same for both potentials $q_1$ and $q_2$. The third order linearizations $w_\beta^{(ijk)}$ now have (possibly) non-zero boundary values and satisfy 
\begin{align*}
	\begin{cases}
		(-\Delta +V)w_\beta^{(ijk)} =-2q_\beta\left( v^{(i)} w_\beta^{(jk)} + v^{(j)}w_\beta^{(ik)} + v^{(k)} w_\beta^{(ij)}\right)  & \text{ in }M,\\
		w_\beta^{(ijk)} =f_{ijk} &\text{ on }\p M,
	\end{cases}
\end{align*}
where $w_\beta^{(ijk)}=\left. \p^3 _{\eps_i \eps_j\eps_k}\right|_{\eps=0} u_\beta$,
for $\beta=1,2$ and different $i,j,k\in \{1,\ldots,4\}$. The boundary values $f_{ijk}$ are the same for both of the equations  \eqref{equ_integral third id0}, which correspond to the potentials $q_1$ and $q_2$. 

The fourth order linearization 
\[
 w^{(1234)}_\beta=\left. \p^4 _{\eps_1 \eps_2\eps_3\eps_4}\right|_{\eps=0} u_\beta
\]
is the solution of 
\begin{align}
	\begin{cases}
	(-\Delta+V)w_\beta^{(1234)}
		=-2q \left(  v^{(1)}w_\beta^{(234)} + v^{(2)} w_\beta^{(134)}   \right. \\		 
		\qquad \qquad \qquad \qquad  \qquad \qquad + v^{(3)} w_\beta^{(124)}+ v^{(4)}w_\beta^{(123)} \\
		\qquad \qquad \qquad \qquad \quad \qquad \qquad \left.  + w_\beta^{(12)}w_\beta^{(34)}+   w_\beta^{(13)}w_\beta^{(24)} + w_\beta^{(14)} w_\beta^{(23)}  \right) 
		&\text{ in }M,
		\\
		w_\beta^{(1234)}= 0 
		&\text{ on } \p M.
	\end{cases}
\end{align}
Using $\Lambda_{q_1}=\Lambda_{q_2}$ we have by Lemma \ref{Lem:Integral identities} the integral identity
\begin{align}\label{fourth integral id_proof_sec_sec4}
\begin{split}
				 0&=\int_M \Bigg\{ q_1 \left(  v^{(1)}w_1^{(234)} + v^{(2)} w_1^{(134)}  + v^{(3)} w_1^{(124)}+ v^{(4)}w_1^{(123)}  \right.  \\
				&\qquad\qquad \qquad \qquad \qquad  \qquad  \left.  + w_1^{(12)}w_1^{(34)}+   w_1^{(13)}w_1^{(24)} + w_1^{(14)} w_1^{(23)}  \right) \\
				& \qquad \qquad  - q_2 \left(  v^{(1)}w_2^{(234)} + v^{(2)} w_2^{(134)}    + v^{(3)} w_2^{(124)}+ v^{(4)}w_2^{(123)}\right. \\
				& \left.\qquad \qquad \qquad \qquad \qquad \qquad    + w _2^{(12)}w_2^{(34)}+   w_2^{(13)}w_2^{(24)} + w_2^{(14)} w_2^{(23)}  \right)  \Bigg\} v^{(5)}\, dV.
				\end{split}
    \end{align}
%

We will use five CGOs as the solutions $v^{(k)}$, $k=1,\ldots,5$. As before, these have the form 
\[
 v^{(k)} = e^{\pm s_k x_1}\left( \tau^{\frac{n-2}{8}} e^{\mathbf{i}s_k \psi_k} a^{(k)}_\tau + r_\tau^{(k)}\right),
\]
where $s_k=c_k\tau+\mathbf{i}\lambda_k$. However, the geodesics of $(M_0,g_0)$ corresponding to the phase functions $\psi_k$ will be different from what we used earlier. We choose the geodesics so that each pair of different ones of them intersect properly. This is as before. However, we additionally require the geodesics to be so that
\[
 (\pm c_i\pm c_j\pm c_k)^2-\abs{c_i\dot \gamma_i+c_j\dot \gamma_j+c_k\dot \gamma_k}^2\neq 0,
\]
when all the geodesics $\gamma_k$ intersect. This is the condition $\langle \nabla\widetilde \Psi, \nabla\widetilde \Psi\rangle\neq 0$ of Lemma \ref{3rd_lin_sol_free_bndr} and Proposition \ref{3rd_lin_sol_fixed_bndr}. 

%
With suitable choices of other geodesics $\gamma_2,\gamma_3, \gamma_4, \gamma_5$, and coefficients $c_k$, $k=1,\ldots,5$, we show that the integrand on the right hand side of \eqref{fourth integral id_proof_sec_sec4} restricted to a neighborhood of $p_0$ in $M_0$ is close to a multiple of the delta function at $p_0$. 
%

\subsection{Choices of vectors for the fourth order linearization}\label{sec:choice_of_vecs_4th_lin}
The fourth order linearization $w^{(1234)}$ of $(-\Delta_q+V)+qu^2=0$ satisfies
\begin{align}\label{w_eq_4th_choosing_vectors_motivation}
	\begin{split}
	(-\Delta_{g}+V)w^{(1234)} 	=-&2q \left(  v^{(1)}w^{(234)} + v^{(2)} w^{(134)}  + v^{(3)} w^{(124)}+ v^{(4)}w^{(123)}  \right.\\
		&\qquad  \left. + w ^{(12)}w^{(34)}+   w^{(13)}w^{(24)} + w^{(14)} w^{(23)}  \right)  \quad \text{ in }M.
	\end{split}
\end{align}
Our aim is to show that the solution $w^{(1234)}$ behaves like $v^{(1)}v^{(2)}v^{(3)}v^{(4)}$ up to a multiplication by an amplitude function for $\tau$ sufficiently large.

\medskip

\noindent $\bullet$ \textbf{Failed choices of vectors.} Let us first discuss why the earlier vectors $\overline \xi_i$, $i=1,2,3,4$, do not work here. If we use the earlier vectors \eqref{eq:vecs_olxia} and the corresponding CGOs we will find that for example $w^{(123)}$ in \eqref{w_eq_4th_choosing_vectors_motivation} solves
\begin{align}\label{eq:eqn_type_motivation}
\begin{split}
		(-\Delta_{g}+V)w^{(123 )}&=-2q\left( v^{(1)} w^{(23)} + v^{(2)}w^{(13)} + v^{(3)} w^{(12)}\right) \\
		&= e^{\tau \sum_{j\in \{1,2,3\}}\abs{\xi_j}((-1)^{j+1}x_1+\mathbf{i}\psi_j)}\check a ,
		\end{split}
\end{align}
where $\check a$ is some amplitude function whose precise form is not important for this discussion. At the intersection points of the geodesics corresponding to $\xi_a$
\[
 \nabla\left(\sum_{j=1}^3\abs{\xi_j}((-1)^{j+1}x_1+\mathbf{i}\psi_j)\right)=\sum_{j=1}^3\overline \xi_j=-\overline \xi_4
\]
by \eqref{eq:lightlike_sums_to_zero}. Now, if we try a WKB ansatz of the form $e^{\tau \sum_{a\in \{2,3,4\}}\abs{\xi_a}((-1)^{a+1}x_1+\mathbf{i}\psi_a)}\check b$ to solve \eqref{eq:eqn_type_motivation}, where $\check b$ is an amplitude function, we end up dividing by $\langle \overline \xi_4, \overline \xi_4\rangle$, which is $0$.  
Consequently, the ansatz does not work  and we need to use vectors that are different than $\overline \xi_a$.

\medskip

\noindent $\bullet$ \textbf{Successful choices of vectors.} We choose new vectors to define the CGOs $v^{(k)}$, such that we can apply these CGOs to achieve our target. Denote the vectors by 
\[
\overline{\zeta}_k, \quad k=1,2,3,4,5. 
\]
Let $\delta>0$ and let $\xi_j$, $j=1,2,3,4$, be as in Section \ref{sec:choice_of_vecs_3rd_lin}. Especially $\langle \xi_1,\xi_2\rangle =1-\delta$ and $\abs{\xi_1}=\abs{\xi_2}=1$. Note that the integral identity \eqref{fourth integral id_proof_sec_sec4} implicitly concerns $5$ possibly different $v^{(k)}$. We choose the vectors $\zeta_k\in T_{p_0}M_0$ as follows
\begin{align*}
 &\zeta_1=\xi_1, \qquad \zeta_2=\xi_2, \qquad  \\
 &\zeta_3=
 \left(1+\sqrt{\frac{2}{2-\delta}}\right)\xi_3 ,  \quad \zeta_4=\left(1+\sqrt{\frac{2}{2-\delta}}\right)\xi_4, \\
\text{ and } \quad &\zeta_5=\sqrt{\frac{2}{2-\delta}}(\xi_1+\xi_2).
\end{align*}
Note that $\abs{\zeta_5}=2$. 
We define $\ol\zeta_k$ by 
\begin{align*}
& \ol \zeta_1=\abs{\zeta_1}e_1+ \mathbf{i}\zeta_1, \quad \ol \zeta_2=\abs{\zeta_2}e_1+\mathbf{i}\zeta_2, \\
 &\ol \zeta_3=\abs{\zeta_3}e_1+\mathbf{i}\zeta_3, \quad \ol \zeta_4=-\abs{\zeta_4}e_1+\mathbf{i}\zeta_4,\\
  \text{ and }\quad & \ol \zeta_5=-\abs{\zeta_5}e_1+\mathbf{i}\zeta_5.
\end{align*}
 We also define 
\[
 c_k=\abs{\zeta_k}.
\]
In particular, we have $c_1=c_2=1$ and $c_5=2$. Then 
\begin{align}\label{eq:sum_zeta_zero}
	\sum_{j=1}^5 \zeta_j=&\xi_1+\xi_2-\left(1+\sqrt{\frac{2}{2-\delta}}\right)\left(\frac{1}{1+\delta}\xi_1+\frac{\delta}{1+\delta} \xi_2+\frac{\delta}{1+\delta}\xi_1+\frac{1}{1+\delta} \xi_2\right)\nonumber\\
	&+ \sqrt{\frac{2}{2-\delta}}(\xi_1+\xi_2) =0.
\end{align}
and
\begin{equation}\label{eq:sum_real_part_olzeta_zero}
\text{Re}\left(\sum_{j=1}^5 \ol \zeta_j\right)=0.
\end{equation}
Consequently, the sum of the vectors $\ol \zeta_a$ vanishes:
\begin{equation}\label{eq:sum_of_ol_zeta_4th}
 \sum_{j=1}^5 \ol \zeta_j=0.
\end{equation}
The condition \eqref{eq:sum_of_ol_zeta_4th} will imply that the non-stationary phase at $p_0$ and exponentially growing factors of in the integrand of the integral identity \eqref{fourth integral id_proof_sec_sec4} will cancel out. We showed in Section \ref{sec:choice_of_vecs_3rd_lin} that the vectors $\xi_1,\ldots,\xi_4$ are pairwise linearly independent. Consequently $\zeta_1,\ldots,\zeta_4$ are pairwise linearly independent. We also see that $\zeta_5$ is not proportional to any of the other vectors $\zeta_1,\ldots,\zeta_4$. It follows that the geodesics of $(M_0,g_0)$ corresponding to $\zeta_1,\ldots,\zeta_5$ intersect properly. Since the vectors $\zeta_2,\ldots, \zeta_5$ are up to scalings small perturbations of $\xi_1$, the corresponding geodesics are nontangential and they do not have self-interactions. 

We then consider how solutions to \eqref{w_eq_4th_choosing_vectors_motivation}, which correspond to the CGOs determined by the vectors $\ol \zeta_j$, $j=1,2,3,4$, look like. 
Let us first note that at the intersection points of the corresponding geodesics
\begin{equation}\label{eq:condition_for_4th_ord_lemma}
 (\pm c_i\pm c_j\pm c_k)^2-\abs{c_i\nabla'\psi_i+c_j\nabla'\psi_j+c_k\nabla'\psi_k}^2\neq 0
\end{equation}
for all indices $i,j,k\in \{2,3,4\}$ which are all different. Indeed, by \eqref{eq:sum_of_ol_zeta_4th} we note that
\[
 (\pm c_i\pm c_j\pm c_k)^2-\abs{c_i\nabla'\psi_i+c_j\nabla'\psi_j+c_k\nabla'\psi_k}^2=\langle \ol\zeta_l+\ol\zeta_m, \ol\zeta_l+\ol\zeta_m\rangle,
\]
where $l,m \{1,2,3,4,5\}$ are the unique two different indices, which do not belong to the set $\{j,k,l\}$. Then, we have
\begin{align*}
	\langle \ol\zeta_l+\ol\zeta_m, \ol\zeta_l+\ol\zeta_m\rangle=2\langle \ol\zeta_l, \ol\zeta_m\rangle=2(\pm c_lc_m  -\langle \zeta_l, \zeta_m\rangle)\neq 0,
\end{align*}
since $\abs{\langle \zeta_l, \zeta_m\rangle}< \abs{\zeta_l}\abs{\zeta_m}=c_lc_m$. Here we have the strict inequality since $\zeta_l$ and $\zeta_m$ are linearly independent.

By \eqref{eq:condition_for_4th_ord_lemma}, we may apply Lemma \ref{3rd_lin_sol_free_bndr}. Thus, by having the restrictions of the supports of $v^{(k)}$ to $M_0$ in small enough neighborhoods of the corresponding geodesics, the solution $w^{(123)}$ to third linearization 
\begin{align*}
		(-\Delta +V)w^{(123)} =-2q\left( v^{(1)} w_j^{(23)} + v^{(2)}w^{(13)} + v^{(3)} w_j^{(12)}\right)
%
\end{align*}
up to a correction term is given by a WKB ansatz with amplitude of the form \eqref{expansion_formula_new_B}. The leading order coefficient of the ansatz is 
\begin{align*}
    B_{-4}^{(123)}=&4q^2a_0^{(1)}a_0^{(2)}a_0^{(3)}\frac{1}{(\pm c_1\pm c_2\pm c_3)^2-\abs{c_1\nabla'\psi_1+c_2\nabla'\psi_2+c_3\nabla'\psi_3}^2}\nonumber\\
    &\quad \times \Bigg(\frac{1}{(\pm c_1\pm c_2)^2-\abs{c_1\nabla'\psi_1+c_2\nabla'\psi_2}^2} \\
    & \quad \qquad +\frac{1}{(\pm c_1\pm c_3)^2-\abs{c_1\nabla'\psi_1+c_3\nabla'\psi_3}^2}\nonumber \\
    & \quad \qquad +\frac{1}{(\pm c_2\pm c_3)^2-\abs{c_2\nabla'\psi_2+c_3\nabla'\psi_3}^2}\Bigg). 
	\end{align*}
	Note the power $2$ of the potential $q$ in $B_{-4}^{(123)}$. Let us define for $i,j,k\in \{1,\ldots,4\}$ all different
\begin{align}\label{defi of coefficient D}
\begin{split}
	& \mathbf{D}_{ij}=\langle \ol \zeta_i+\ol \zeta_j, \ol \zeta_i+\ol \zeta_j\rangle, \\
	\text{ and }\quad &  \mathbf{D}_{ijk}=\langle\ol \zeta_i +\ol \zeta_j+\ol \zeta_k,\ol \zeta_i+\ol \zeta_j+\ol \zeta_k\rangle.
\end{split}
\end{align}
At an intersection point $p_0$ of the geodesics, we thus have is 
\[
 B_{-4}^{(123)}(p_0)=4q^2a_0^{(1)}a_0^{(2)}a_0^{(3)}\frac{1}{\mathbf{D}_{123}}\left(\frac{1}{\mathbf{D}_{12}+\mathbf{D}_{13}+\mathbf{D}_{23}}\right).
\]
We have similar formulas for the leading order coefficients of the ansatzes for $w^{(234)}$, $w^{(134)}$ and $w^{(124)}$. 
Furthermore, by \eqref{eq:sum_of_ol_zeta_4th} we have 
\begin{align}\label{D123=D45}
	\begin{split}
		\mathbf{D}_{123}&=\langle\ol \zeta_1 +\ol \zeta_2+\ol \zeta_3, \ol \zeta_1+\ol \zeta_2+\ol \zeta_3\rangle 
		= \langle \ol \zeta_4+ \ol \zeta_5, \ol \zeta_4+ \ol \zeta_5 \rangle =\mathbf{D}_{45}, \\
		\mathbf{D}_{234}&=\langle\ol \zeta_1+\ol \zeta_5, \ol \zeta_1+\ol \zeta_5\rangle =\mathbf{D}_{15},\\
		\mathbf{D}_{134}&=\mathbf{D}_{25}, \\
		\mathbf{D}_{124}&= \mathbf{D}_{35}.
	\end{split}
\end{align}
Therefore, by using \eqref{D123=D45}, the solution $w^{(1234)}$ to the fourth order linearization will be (up to a correction term) of the form
\begin{align}\label{w_eq_4th_choosing_vectors_formal}
\begin{split}
&\left[\frac{1}{\mathbf{D}_{15}}\left(\frac{1}{\mathbf{D}_{23}+\mathbf{D}_{24}+\mathbf{D}_{34}}\right) + \frac{1}{\mathbf{D}_{25}}\left(\frac{1}{\mathbf{D}_{13}+\mathbf{D}_{14}+\mathbf{D}_{34}}\right)   \right. \\		 
		&\qquad  + \frac{1}{\mathbf{D}_{35}}\left(\frac{1}{\mathbf{D}_{12}+\mathbf{D}_{14}+\mathbf{D}_{24}}\right) + \frac{1}{\mathbf{D}_{45}}\left(\frac{1}{\mathbf{D}_{12}+\mathbf{D}_{13}+\mathbf{D}_{23}}\right) \\
		&\qquad   \left. \left.  + \frac{1}{\mathbf{D}_{12}}\frac{1}{\mathbf{D}_{34}}+  \frac{1}{\mathbf{D}_{13}}\frac{1}{\mathbf{D}_{24}} + \frac{1}{\mathbf{D}_{14}}\frac{1}{\mathbf{D}_{23}}\right)\right] \\
		& \qquad \quad \times e^{\tau \sum_{j\in \{1,2,3,4\}}(\text{Re}(\zeta_j)x_1+\mathbf{i}\abs{\zeta_j}\psi_j}\widetilde A.
		\end{split}
\end{align}
Here $\widetilde{A}$ is an amplitude function which has (up to a multiplication by a power of $\tau$) the leading order coefficient 
\[
8q^3a_0^{(1)}a_0^{(2)}a_0^{(3)}a_0^{(4)}.
\]
Note the power $3$ of the potential $q$ here.

Similar to Section \ref{sec:choice_of_vecs_3rd_lin}, where we showed that the factor \eqref{eq:third_lin_coef_nonzero} of the third order linearization is non-zero, we may show that the coefficient in the brackets of \eqref{w_eq_4th_choosing_vectors_formal}, call it $\mathbf{E}_\delta$ is not zero. We have:

\begin{lem}\label{lem:Edelta}
	The quantity 
	\begin{align}
		\begin{split}
			\mathbf{E}_\delta &= \frac{1}{\mathbf{D}_{15}}\left(\frac{1}{\mathbf{D}_{23}+\mathbf{D}_{24}+\mathbf{D}_{34}}\right) + \frac{1}{\mathbf{D}_{25}}\left(\frac{1}{\mathbf{D}_{13}+\mathbf{D}_{14}+\mathbf{D}_{34}}\right)    \\		 
			&\qquad  + \frac{1}{\mathbf{D}_{35}}\left(\frac{1}{\mathbf{D}_{12}+\mathbf{D}_{14}+\mathbf{D}_{24}}\right) + \frac{1}{\mathbf{D}_{45}}\left(\frac{1}{\mathbf{D}_{12}+\mathbf{D}_{13}+\mathbf{D}_{23}}\right) \\
			&\qquad     + \frac{1}{\mathbf{D}_{12}}\frac{1}{\mathbf{D}_{34}}+  \frac{1}{\mathbf{D}_{13}}\frac{1}{\mathbf{D}_{24}} + \frac{1}{\mathbf{D}_{14}}\frac{1}{\mathbf{D}_{23}} = \mathcal{O}(\delta^{-3}) \neq 0,
		\end{split}
	\end{align}
for all sufficiently small $\delta>0$.
\end{lem}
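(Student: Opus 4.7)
The plan is to expand each coefficient $\mathbf{D}_{ij}$ to sufficient order in the small parameter $\delta$, isolate the single summand in $\mathbf{E}_\delta$ of largest order $\delta^{-3}$, and verify that its leading coefficient is nonzero. First I would observe that each $\overline{\zeta}_i$ is null for the complexified inner product ($\langle\overline{\zeta}_i,\overline{\zeta}_i\rangle = c_i^2-|\zeta_i|^2=0$), so \eqref{defi of coefficient D} reduces to $\mathbf{D}_{ij}=2(\varepsilon_{ij}c_ic_j - \langle\zeta_i,\zeta_j\rangle)$, where $\varepsilon_{ij}=+1$ precisely when $\{i,j\}\subset\{1,2,3\}$ or $\{i,j\}\subset\{4,5\}$ (so that the $e_1$-components of $\overline{\zeta}_i,\overline{\zeta}_j$ share sign), and $\varepsilon_{ij}=-1$ otherwise. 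Feeding the formulas for $\zeta_k$ from Section \ref{sec:choice_of_vecs_4th_lin} into this expression and Taylor expanding in $\delta$, a direct computation gives $\mathbf{D}_{12}=2\delta$, $\mathbf{D}_{14}=-4\delta+O(\delta^2)$, $\mathbf{D}_{35}=-2\delta+O(\delta^2)$, and $\mathbf{D}_{13},\mathbf{D}_{23}=8+O(\delta)$, $\mathbf{D}_{15},\mathbf{D}_{25}=-8+O(\delta)$, $\mathbf{D}_{34}=-16+O(\delta)$, $\mathbf{D}_{45}=16+O(\delta)$.

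The crucial step, and the main obstacle, is the sharper estimate $\mathbf{D}_{24}=-4\delta^3+O(\delta^4)$, which requires a threefold cancellation. The slickest route is to exploit \eqref{eq:sum_of_ol_zeta_4th}: squaring $\overline{\zeta}_2+\overline{\zeta}_4 = -(\overline{\zeta}_1+\overline{\zeta}_3+\overline{\zeta}_5)$ and using nullness yields
\[
\mathbf{D}_{24} \;=\; \mathbf{D}_{13}+\mathbf{D}_{15}+\mathbf{D}_{35},
\]
so a direct verification that the $\delta^0$, $\delta^1$, and $\delta^2$ contributions on the right cancel confirms $\mathbf{D}_{24}=O(\delta^3)$. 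To pin down the nonvanishing coefficient I would compute $|\xi_4|=(1-2\delta^2/(1+\delta)^2)^{1/2}=1-\delta^2+2\delta^3+O(\delta^4)$ and $-\langle\xi_2,\xi_4\rangle=(1+\delta-\delta^2)/(1+\delta)=1-\delta^2+\delta^3+O(\delta^4)$, whose sum $|\xi_4|+\langle\xi_2,\xi_4\rangle=\delta^3+O(\delta^4)$ feeds through the identity $\mathbf{D}_{24}=-2\bigl(1+\sqrt{2/(2-\delta)}\bigr)\bigl(|\xi_4|+\langle\xi_2,\xi_4\rangle\bigr)$ to give $\mathbf{D}_{24}=-4\delta^3+O(\delta^4)$.

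Finally I would classify the seven summands in the definition of $\mathbf{E}_\delta$ by their order in $\delta$: the three terms on the first displayed line of the lemma are $O(1)$, $O(1)$, $O(\delta^{-2})$ respectively; the term on the second line is $O(1)$; the products $(\mathbf{D}_{12}\mathbf{D}_{34})^{-1}$ and $(\mathbf{D}_{14}\mathbf{D}_{23})^{-1}$ are $O(\delta^{-1})$; and uniquely $(\mathbf{D}_{13}\mathbf{D}_{24})^{-1}$ is of order $\delta^{-3}$. Collecting,
\[
\mathbf{E}_\delta \;=\; \frac{1}{\mathbf{D}_{13}\mathbf{D}_{24}} + O(\delta^{-2}) \;=\; -\frac{1}{32\,\delta^{3}} + O(\delta^{-2}),
\]
which is nonzero for all sufficiently small $\delta>0$, as claimed.
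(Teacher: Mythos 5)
Your proposal is correct and follows essentially the same route as the paper's Appendix~\ref{Appendix D_ikl}: expand each $\mathbf{D}_{ij}$ in $\delta$, classify the seven summands of $\mathbf{E}_\delta$ by their orders, and conclude that $(\mathbf{D}_{13}\mathbf{D}_{24})^{-1}$ dominates. The one genuine improvement in your write-up is the sharp expansion $\mathbf{D}_{24}=-4\delta^3+\mathcal{O}(\delta^4)$ (the paper's displayed computation only asserts $\mathbf{D}_{24}=\mathcal{O}(\delta^3)$); this lower bound on $|\mathbf{D}_{24}|$ is actually what is needed both for the claimed $\mathcal{O}(\delta^{-3})$ upper bound on $\mathbf{E}_\delta$ and for the lower bound $|\mathbf{E}_\delta|\gtrsim\delta^{-3}$, so making it explicit (as you do, with the resulting $\mathbf{E}_\delta=-\tfrac{1}{32}\delta^{-3}+\mathcal{O}(\delta^{-2})$) tightens the argument. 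Your observation that nullness of the $\ol\zeta_k$ together with $\sum_k\ol\zeta_k=0$ forces the identity $\mathbf{D}_{24}=\mathbf{D}_{13}+\mathbf{D}_{15}+\mathbf{D}_{35}$ is a pleasant structural explanation for why this threefold cancellation should be expected, though it is not strictly needed once the Taylor expansions of $|\xi_4|$ and $\langle\xi_2,\xi_4\rangle$ are carried to third order as you do.
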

The proof of the lemma is elementary, but involves rather long calculations. We have placed the proof in Appendix \ref{Appendix D_ikl}.

\subsection{Proof of $q_1=q_2$ (continued)}
Let us then return to proving $q_1=q_2$. Let $\overline{\zeta}_k$, $k=1,\ldots,5$, be as in Section \ref{sec:choice_of_vecs_4th_lin} above. We have
\[
 c_k=\abs{\zeta_k} 
\]
and we set
\[
 s_1=c_1\tau+\mathbf{i}\lambda, \text{ and } s_k=c_k, \quad k=2,3,4,5.
\]
The CGOs corresponding to vectors $\ol \zeta_5$ are of the form
\begin{align}\label{eq:choices_for_CGOs_4th_lin}
\begin{split}
 v^{(1)}&= e^{(\abs{\zeta_1}\tau+\mathbf{i}\lambda)  x_1} \left(\tau^{\frac{n-2}{8}}e^{\mathbf{i} (\abs{\zeta_1}\tau+\mathbf{i}\lambda)\psi_1} a_1 + r_1 \right), \\
 v^{(2)}&= e^{\abs{\zeta_2}\tau x_1} \left(\tau^{\frac{n-2}{8}}e^{\mathbf{i} \abs{\zeta_2}\tau \psi_2} a_2 + r_2 \right),\\
 v^{(3)}&= e^{\abs{\zeta_3}\tau x_1} \left(\tau^{\frac{n-2}{8}}e^{\mathbf{i} \abs{\zeta_3}\tau \psi_3} a_3 + r_3 \right),\\
 v^{(4)}&= e^{-\abs{\zeta_4} \tau x_1} \left(\tau^{\frac{n-2}{8}}e^{\mathbf{i} \abs{\zeta_4}\tau \psi_4} a_4 + r_4 \right),\\
 v^{(5)}&= e^{-\abs{\zeta_5}\tau x_1} \left(\tau^{\frac{n-2}{8}}e^{\mathbf{i}\abs{\zeta_5}\tau \psi_5} a_5 + r_5 \right).
 \end{split}
\end{align}

Since $\Lambda_{q_1}(f_\eps)=\Lambda_{q_2}(f_\eps)$, by Propositions \ref{2nd_lin_sol_fixed_bndr} and  \ref{3rd_lin_sol_fixed_bndr} there are boundary values $f_{ij}$ and $f_{ijk}$, $i,j,k=1,2,3,4$, such that the solutions of the second linearized equations \eqref{equ_integral second id0}
and third linearized equations \begin{align}
\begin{cases}
		(-\Delta +V)\omega_\beta^{(ijk)} =-2q_\beta\left( v^{(i)} w_\beta^{(jk)} + v^{(j)}w_\beta^{(ik)} + v^{(k)} w_\beta^{(ij)}\right)  & \text{ in }M,\\
	\omega_\beta^{(ijk)} =f_{ijk} &\text{ on }\p M,
	\end{cases}
\end{align}
for $\beta=1,2,$ and $i,j,k$ all different, which are of the form
\begin{align*}
	 w_\beta^{(ij)}=&w_{0,\beta}^{(ij)}+ e^{\tau \Psi^{(ij)}}R_\beta^{(ij)},\\
	 \text{and }\quad \omega_\beta^{(ijk)}=&\omega_{0,\beta}^{(ijk)}+ e^{\tau \widetilde\Psi^{(ijk)}}\widetilde R_\beta^{(ijk)}.
\end{align*}
 For given $K,N\in \N\cup \{0\}$, the correction terms $R_\beta^{(ij)}$ and $\widetilde R_\beta^{(ijkl)}$ can be assumed to be $\mathcal{O}_{L^2(M)}(\tau^{-N})$ by taking the amplitude expansions of the CGOs $v^{(k)}$, $k=1,2,3,4$ to be precise enough (i.e. $N'$ large enough). We refer to Propositions \eqref{2nd_lin_sol_fixed_bndr} and \eqref{3rd_lin_sol_fixed_bndr} for the specifics of $w_\beta^{(ij)}$ and $w_\beta^{(ijk)}$. 
 
The phase functions $\Psi^{(ij)}$ and $\widetilde\Psi^{(ijk)}$ satisfy at the point $p_0$ where all the geodesics $\gamma_1,\ldots,\gamma_5$ intersect
 \begin{align}\label{Psi ikl}
 \begin{split}
	\Psi^{(ij)}(p_0)&=\ol \zeta_i +\ol \zeta_j, \\  
 	\widetilde\Psi^{(ijk)}(p_0)&=\ol \zeta_i +\ol \zeta_j+\ol\zeta_k.
 \end{split}
\end{align}
The leading order coefficients of the amplitudes of $w_\beta^{(ik)}$ and $\omega_\beta^{(ikl)}$ are
 \begin{align}\label{A4 ikl}
 \begin{split}
	b_{-2,\beta}^{(ik)}(p_0)&=\frac{2q_\beta}{\mathbf{D}_{ik}}a_0^{(i)}a_0^{(k)},\\ 
 	B_{-4,\beta}^{(ikl)}(p_0)&=4q_\beta^2a_0^{(i)}a_0^{(k)}a_0^{(l)}\frac{1}{\mathbf{D}_{ikl}}\left(\frac{1}{\mathbf{D}_{ik}+\mathbf{D}_{il}+\mathbf{D}_{kl}}\right).
 \end{split}
\end{align}

Let us denote $\Psi_{12345}$ as the sum of all the phase functions of $v^{(1)},\ldots, v^{(5)}$ in \eqref{eq:choices_for_CGOs_4th_lin}, where $\tau$ is a parameter. More precisely, $\Psi_{12345}$ is given as 
\[
 \Psi_{12345}=\Big(\abs{\zeta_1}+\abs{\zeta_2}+\abs{\zeta_3}-\abs{\zeta_4}-\abs{\zeta_5}\Big)x_1+\mathbf{i}\Big(\abs{\zeta_1}\psi_1+\abs{\zeta_2}\psi_2+\abs{\zeta_3}\psi_3+\abs{\zeta_4}\psi_4+\abs{\zeta_5}\psi_5\Big).
\]
Let us also set
\[
 \Lambda_{1234}=\lambda_1(\mathbf{i}\s x_1-\psi_1).
\]
At the point $p_0$ where all the geodesics $\gamma_1,\ldots,\gamma_5$ intersect
\begin{equation}\label{eq:oscillation_cancel_zeta_proof}
 \nabla\Psi_{12345}(x_1,p_0)=0
\end{equation}
for $x\in I\subset \R$ by \eqref{eq:sum_zeta_zero}, and
\begin{equation}\label{eq:exponential_large_cancel_zeta_proof}
 \mathrm{Re}(\Psi_{12345})(x_1,p_0)=0
\end{equation}
by \eqref{eq:sum_real_part_olzeta_zero}. The condition \eqref{eq:exponential_large_cancel_zeta_proof} implies that $\Psi_{12345}$ is not exponentially growing in $\tau$. Moreover, by \eqref{eq:oscillation_cancel_zeta_proof} we have that $p_0$ is a critical point of $\Psi_{12345}$. By the properties of $\psi_k$, the point $p_0$ is also nondegenerate, see \eqref{phase_pr}.

We multiply the right hand side of the integral identity  of the fourth order linearization \eqref{fourth integral id_proof_sec_sec4} by $\tau^{4}\tau^{1/2}$ and take the limit $\tau\to \infty$. In the case $p_0$ is the only point where all the geodesics $\gamma_1,\ldots,\gamma_5$ intersect, by stationary phase the limit tends to
\[
 0=c_{\tilde{A}}\mathbf{E}_\delta\left. \left(a_0^{(1)}a_0^{(2)}a_0^{(3)}a_0^{(4)}a_0^{(5)}\right)\right|_{p_0}\int_\R e^{i\lambda x_1}(q_1^3(x_1,p_0)-q_2^3(x_1,p_0))\, dx_1,
\]
where $\mathbf{E}_\delta$ is the coefficient of $w^{(1234)}$ in Lemma \ref{lem:Edelta} and $c_{\tilde{A}}\neq 0$ is given by a similar formula as $c_A$ in Section \ref{sec:proof_third_lin_continued}.  Here we also used that $a_0^{(1)},\ldots, a_0^{(5)}$ are independent of $x_1$. By Lemma \ref{lem:Edelta}, the coefficient $\mathbf{E}_\delta\neq 0$ for all small enough $\delta>0$.  
Inverting, the Fourier transformation in the variable $x_1$ shows that $q_1^3(x_1,p_0)=q_2^3(x_1,p_0)$. Thus 
\[
q_1(x_1,p_0)=q_2(x_1,p_0) 
\]
for $x_1\in \R$. 
If there were several points where $\gamma_1,\ldots,\gamma_5$ intersect, we argue similarly as in Section \ref{sec:proof_third_lin_continued} by using \cite[Lemma 6.2]{LLLS2019inverse}. Since $p_0$ was arbitrary, this completes the proof.

\appendix

\section{Boundary determination}\label{Section: Boundary determination}
We prove that the DN map of the semilinear elliptic equation 
\[
(-\Delta_g+V)u +qu^m=0 \text{ in } M,  \quad u=f \text{ on } \p M
\]
on a compact smooth Riemannian manifold with boundary determines the formal Taylor series (the jet) of the coefficient $q$ (in the boundary normal coordinates) on the boundary. Here, $m\geq 2$ is an integer, and $V$ and $q$ are smooth functions on $M$. We assume also that zero is not a Dirichlet eigenvalue for the operator $-\Delta_g+V$ on $M$.

We expect this result to be well-known to experts on the field, but could not find a reference on it, so we offer detailed characterization and its proof.

\begin{prop}[Boundary determination]\label{Prop_boundary_det}
	For $m\geq 2$, $m\in \N$, let $(M,g)$ be a compact Riemannian manifold with $C^\infty$ boundary $\p M$ and consider the boundary value problem 
	\begin{align}\label{eq:boundary_det_equation_app}
		\begin{cases}
			(-\Delta_g +V) u+qu^m=0 & \text{ in } M,\\
			u= f & \text{ on } \p M.
		\end{cases}
	\end{align}
	Assume that the DN map $\Lambda_q$ of the equation \eqref{eq:boundary_det_equation_app} is known for small boundary values. Then $\Lambda_q$ determines the formal Taylor series of $q$ on the boundary $\p M$. 
	
	In addition, if $f\in C^{2,\alpha}(\p M)$ is so small that \eqref{eq:boundary_det_equation_app} has a unique small solution, the DN map determines the formal Taylor series of the solution $u=u_f$ at any point on the boundary.
\end{prop}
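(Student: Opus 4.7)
My plan is to combine an $m$-fold linearization with the classical boundary determination for linear second order elliptic operators and with a WKB construction of boundary-localized solutions. Since $u\equiv 0$ when $\eps=0$ and $m\geq 2$, differentiating the equation once in $\eps$ at $\eps=0$ produces only the linear problem $(-\Delta_g+V)v=0$ with Dirichlet data $f_1$, so the knowledge of $\Lambda_q$ for small boundary values determines the linear DN map $\Lambda_V$. By the classical boundary determination for linear second order elliptic equations, $\Lambda_V$ recovers the full Taylor jet of $V$ at every point of $\p M$. Intermediate linearizations at orders $2,\ldots,m-1$ do not yet involve $q$, because every term produced by differentiating $qu^m$ fewer than $m$ times still carries a factor of $u$, which vanishes at $\eps=0$. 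Only at order $m$ does $q$ enter, and a Green's identity argument parallel to Lemma \ref{Lem:Integral identities} yields
\[
\int_{\p M}\!\left.\p^{m}_{\eps_1\cdots\eps_m}\right|_{\eps=0}\!\bigl(\Lambda_q f_\eps\bigr)\,f_{m+1}\,dS \;=\; m!\int_M q\,v^{(1)}\cdots v^{(m+1)}\,dV,
\]
where $f_\eps=\sum_{i=1}^{m+1}\eps_i f_i$ and each $v^{(i)}$ solves the linear equation with boundary value $f_i$. Hence $\Lambda_q$ determines this multilinear functional of linear solutions.

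To localize the identity at a chosen point $p_0\in\p M$, I introduce boundary normal coordinates $(x',x_n)$ with $p_0\leftrightarrow 0$ and $\p M=\{x_n=0\}$, and for a large parameter $\tau>0$ I construct approximate solutions to $(-\Delta_g+V)v=0$ of the WKB form
\[
v^{(i)}_\tau(x',x_n)= \chi(x',x_n)\,e^{-\tau x_n}e^{\mathbf{i}\tau\xi'_i\cdot x'}\!\left(a^{(i)}_0(x',x_n)+\tau^{-1}a^{(i)}_1(x',x_n)+\cdots\right)+r^{(i)}_\tau,
\]
where the covectors $\xi'_i\in T^*_{p_0}\p M$ are chosen to satisfy the eikonal condition $|\xi'_i|_g=1$ on $\p M$ together with the algebraic constraint $\sum_{i=1}^{m+1}\xi'_i=0$, and $\chi$ is a cutoff supported in a small neighborhood of $p_0$. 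The amplitudes $a^{(i)}_j$ are obtained from the usual transport hierarchy in the normal variable, and the remainder $r^{(i)}_\tau$ is made arbitrarily small in any Sobolev norm by standard elliptic boundary estimates.

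Substituting these into the integral identity and normalizing by an appropriate power of $\tau$, the right-hand side becomes an integral of $q(x',x_n)$ against the exponentially decaying factor $e^{-(m+1)\tau x_n}$ times an $x'$-phase that is stationary at $x'=0$ thanks to $\sum\xi'_i=0$. Stationary phase in $x'$ combined with a Laplace-type expansion in $x_n$ then produces an asymptotic series in $\tau^{-1}$ whose leading coefficient is a nonzero explicit multiple of $q(p_0)$, giving $q(p_0)$. The normal Taylor coefficients of $q$ at $p_0$ are read off from successive subleading terms of the expansion, while tangential coefficients follow either by varying $p_0$ along $\p M$ or by inserting polynomial tangential weights into the amplitudes. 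This recovers the full formal Taylor series of $q$ on $\p M$. For the jet of the solution $u=u_f$, once the jets of $V$ and $q$ on $\p M$ are known and $u|_{\p M}=f$ and $\p_\nu u|_{\p M}=\Lambda_q f$ are prescribed, the equation $(-\Delta_g+V)u+qu^m=0$ serves as an algebraic recursion in boundary normal coordinates: it expresses $\p_n^2 u|_{\p M}$ in terms of tangential derivatives, $\p_n u|_{\p M}$, $u|_{\p M}$ and the known coefficients, and iterating the normal derivative of the equation recovers every $\p_n^k u|_{\p M}$.

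The main technical obstacle I expect is the asymptotic analysis of the oscillatory integral in the third step: one must verify that the leading coefficient is nonzero (which is ensured by $|\xi'_i|=1$, $\sum\xi'_i=0$, and $a^{(i)}_0(0)\neq 0$ coming from the initial data of the transport equations) and that the subleading coefficients cleanly decouple from the already-known jet of $V$ and from the amplitude corrections $a^{(i)}_j$, so that each $\p_n^k q(p_0)$ can be isolated unambiguously. This is a careful but standard stationary-phase and Watson-type computation in boundary normal coordinates.
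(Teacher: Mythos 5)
Your proposal is \emph{not} the route the paper takes, and the two differ in a fundamental way. The paper's proof linearizes the nonlinear DN map \emph{once}, at the one-parameter family $f_t=f_0+tf$ with $f_0=\vareps_0>0$ a small \emph{nonzero constant}. The resulting linearized operator is $-\Delta_g+\widetilde q$ with $\widetilde q=V+m\,q\,u_0^{m-1}$, which \emph{does} contain $q$ precisely because $u_0\neq 0$. The paper then cites known boundary determination for linear Schr\"odinger operators to obtain the jet of $\widetilde q$ on $\p M$, and peels off the jet of $q$ by an algebraic recursion in boundary normal coordinates, using $u_0|_{\p M}=\vareps_0$, $\p_\nu u_0|_{\p M}=\Lambda_q(\vareps_0)$, and the equation to express $\p_{x_n}^2u_0|_{\p M}$ in terms of lower-order data. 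Your approach instead linearizes $m$ times around the zero solution, derives the multilinear identity $\int_M q\,v^{(1)}\cdots v^{(m+1)}\,dV$ (which is correct), and then constructs boundary-concentrated quasimodes to localize the identity at a boundary point. This is a legitimate alternative strategy, closer in spirit to the classical oscillating-solutions route (Kohn--Vogelius, Brown, Nakamura--Uhlmann), but it front-loads all the analytic work that the paper's trick sidesteps by invoking the linear result as a black box.

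There are, however, two genuine gaps in your version. First, the ``stationary phase in $x'$'' step is not correct as stated: if the constant covectors satisfy $\sum_{i=1}^{m+1}\xi'_i=0$, then the total tangential phase $\sum_i\xi'_i\cdot x'$ vanishes \emph{identically}, so there is no oscillation in $x'$ and no stationary-phase localization at $p_0$. The leading term of the Laplace expansion in $x_n$ is therefore a \emph{weighted boundary integral} $\tau^{-1}\int_{\p M}q\,\chi^{m+1}\prod a_0^{(i)}\,dS$, not the point value $q(p_0)$; to extract $q(p_0)$ you must either shrink the cutoff $\chi$ while proving uniform control of the remainders $r_\tau^{(i)}$ as the support shrinks, or use curved phase functions $\psi_i(x')$ whose gradients sum to zero \emph{only} at $p_0$ with a sign-definite imaginary Hessian. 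Second, your claim that ``the normal Taylor coefficients of $q$ are read off from successive subleading terms of the expansion'' is exactly the nontrivial point: those subleading coefficients also contain the amplitude corrections $a_j^{(i)}$, the curvature of $\p M$, and the jet of $V$, and you have not shown that one can solve this triangular system for $\p_{x_n}^k q(p_0)$. Acknowledging it as ``a careful but standard Watson-type computation'' does not close the gap. By contrast, the paper's recursion \eqref{first order of q at boundary}--\eqref{bnd_det_eq} isolates $\p_{x_n}^k q|_{\p M}$ unambiguously and for free. For the last claim about the jet of $u_f$, your argument agrees with the paper's: once the jets of $V$ and $q$ are known, the equation in boundary normal coordinates gives $\p_{x_n}^2u$ and hence, by iteration, the full normal jet.
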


%

\begin{proof}
	
	{\it Determination of Taylor expansion of $q$:}
	
	\medskip
	
	\noindent Let $f\in C^{2,\alpha}(\p M)$. We consider boundary values $f_0\in C^{2,\alpha}(\p M)$ and $f_t=f_0 +tf$ and assume that $\norm{f_0}_{C^{2,\alpha}(\p M)}$ and $|t|$ are sufficiently small so that the DN maps of $f_0$ and $f_t$ are both well-defined. We denote by $u_0$ and $u_t$, the unique solutions of \eqref{eq:boundary_det_equation_app} with boundary data $f_0$ and $f_t$ on $\p M$, respectively. By linearizing the equation \eqref{eq:boundary_det_equation_app} at $t=0$, we obtain  
	\begin{align}\label{bnd_val_prob2}
		\begin{cases}
			-\Delta_g v +\left(V+mqu_0^{m-1}\right)v=0 & \text{ in } M,\\
			v= f & \text{ on } \p M,
		\end{cases}
	\end{align}
	where $v=\displaystyle\lim_{t\to 0}\frac{u_t-u_0}{t}$ and $u_0$ solves
	\begin{align}\label{bnd_val_prob3}
		\begin{cases}
			(-\Delta_g +V) u_0+qu_0^m=0 & \text{ in } M,\\
			u_0= f_0 & \text{ on } \p M.
		\end{cases}
	\end{align}
	Moreover, $v$ is the solution of 
	\begin{align*}
		\begin{cases}
			-\Delta_g  v+ \widetilde{q}v=0 & \text{ in } M,\\
			v=f & \text{ on } \p M.
		\end{cases}
	\end{align*}
	where
	\begin{align}\label{tilde q}
		\widetilde{q}:=V+mqu_0^{m-1} \text{ in }M.
	\end{align}

	Since we know the DN map of the boundary value problem~\eqref{eq:boundary_det_equation_app}, we know the DN map of the linearized problem~\eqref{bnd_val_prob2}. This is proven in~\cite[Proposition 2.1]{LLLS2019inverse}, where it is shown that the DN map is $C^\infty$ in the Frech\'et sense.
	It follows by~\cite[Theorem 8.4.]{ferreira2009limiting} that we know the formal Taylor series of $\widetilde{q}$ on $\p M$.  In particular, by choosing
	$$
	u_0 =f_0=\vareps_0>0 \text{ on } \p M,
	$$
	for some sufficiently small constant $\vareps_0>0$, and noting that 
	\[
	q=\frac{\widetilde{q}-V}{m\vareps_0^{m-1}}\text{ on }  \p M,
	\]
	it follows that we know the  of $q$ on the boundary $\p M$.
	
	Next we determine first order derivatives of $q$ on the boundary. Given a point $x_0 \in \p M$, let $x=(x_1,\ldots, x_n)\in \p M$ be boundary normal coordinates near $x=x_0$ in $M$. 
	Differentiating \eqref{tilde q} yields
	\begin{align}\label{first order of q at boundary}
		\begin{split}
			\p_{x_n}\widetilde{q}=&m\p_{x_n}(u_0^{m-1})q+m(\p_{x_n}q)u_0^{m-1} +\p_{x_n} V\\
			= &m(m-1)u_0^{m-2}( \p_{x_n}u_0 ) q +m(\p_{x_n}q)u_0^{m-1}+\p_{x_n} V.
		\end{split}
	\end{align}
%
Since we have already determined the Taylor series of $\tilde q$ on the boundary and
\[
	\p_{x_n}u_0=\Lambda_q(f_0),
\] 
we may determine $\p_{x_n}q$ by solving it from \eqref{first order of q at boundary}. Since we also know the derivatives of $q$ in tangential directions $x_k$, where $k=1,\ldots, n-1$, we have determined all first order derivatives of $q$ on the boundary. 
%
%
	
	To determine higher order derivatives of $q$ on the boundary, we follow an argument similar to~\cite[Lemma 3.4]{lassas2016conformal}. 
	On a neighborhood of $x_0$ in $M$ we may write 
	\[
	Qu_0:=(-\Delta_g +V) u_0+qu_0^m=-\p_{x_n}^2 u_0+Pu_0,
	\]
	where $P$ is a non-linear partial differential operator containing derivatives in $x'$ up to order $2$ and in $x_n$ up to order $1$. The coefficients of $P$ depend on pointwise values of $q$. By expressing
	\[
	\p_{x_n}^2=P-Q                  
	\]
	we obtain
	\begin{equation}\label{bnd_det_eq}
		\p_{x_n}^2u_0=Pu_0-Qu_0=Pu_0.
	\end{equation}
	Since we already know the quantities 
	\begin{align}\label{BD for derivatives of solutions and potentials}
		u_0,\  \p_{x'}u_0,\ \p_{x'}^2u_0,\ \p_{x_n}u_0,\ \p_{x'}\p_{x_n}u_0,\ q,\ \p_{x'}q  \text{ and }\p_{x_n}q, 
	\end{align}
	it follows from \eqref{bnd_det_eq} that the second derivative $\p_{x_n}^2 u_0$ can be also determined. By using this and differentiating  \eqref{first order of q at boundary}, we may determine second order derivatives of $q$ on the boundary.
%
	The higher order derivatives of $q$ on the boundary can be determined by differentiating \eqref{bnd_det_eq}  and using \eqref{first order of q at boundary} in succession, and by using induction.
	

\medskip

{\it Determination of Taylor expansions of solutions:}
Let then $f\in C^{2,\alpha}(\p M)$ be small enough so that \eqref{eq:boundary_det_equation_app} has a unique small solution $u=u_f$. Since we have determined the formal Taylor series of $q$ on the boundary, the formal Taylor series of $u$ on the boundary is determined by differentiating \eqref{bnd_det_eq} with $u$ in place of $u_0$.
	
\end{proof}

\section{Proof of the Carleman estimate with boundary terms} \label{Section: Boundary Carleman}
	In this section, we proceed to prove Lemma~\ref{lem0}. Let $(M,g)$ be a  compact, smooth, transversally anisotropic Riemannian manifold with a smooth boundary and let $V\in L^{\infty}(M)$. There exists constants $\tau_0>0$ and $C>0$ depending only on $(M,g)$ and $\|V\|_{L^{\infty}(M)}$ such that given any $|\tau|>\tau_0$ and any $v\in C^2(M)$, the following Carleman estimate holds
\begin{multline}
\label{est_0} 
\|e^{-\tau t}(-\Delta_g+V)(e^{\tau t}v)\|_{L^2(M)}+|\tau|^{\frac{3}{2}}\|v\|_{W^{2,\infty}(\p M)} +|\tau|^{\frac{3}{2}}\|\p_\nu v\|_{W^{1,\infty}(\p M)}\\
+|\tau|^{\frac{3}{2}}\|\p^2_\nu v\|_{L^\infty(\p M)} \geq C|\tau|\,\|v\|_{L^2(M)},
\end{multline}

\begin{proof}[Proof of Lemma~\ref{lem0}]
We may assume without loss of generality that $v$ is real-valued and also that $\tau>0$. The proof for the case $\tau<0$ follows analogously. Throughout this proof, we use the notation $C$ to stand for a generic positive constant that is independent of the parameter $\tau$. We also write $\hat{v}$ to stand for a $C^2$-extension of the function $v$ into a slightly larger manifold $\hat{M}\Subset \R\times M_0$ with smooth boundary, such that $v\in C^{2}_c(\hat{M})$ and that there holds
\begin{equation}
\label{extension_v_bound}
\|\hat{v}\|_{W^{2,\infty}(\hat{M}\setminus M)}\leq C(\|\p_\nu^2 v\|_{L^{\infty}(\p M)}+\|\p_\nu v\|_{W^{1,\infty}(\p M)}+\|v\|_{W^{2,\infty}(\p M)}),
\end{equation}
for some constant $C>0$, only depending on $(\hat M,g)$. We remark that this extension of $v$ can always be achieved using a Taylor series approximation together with the boundary normal coordinates near $\p M$. We define
\begin{equation}
\label{P_tau}
 P_\tau v=e^{-\tau t}\Delta_g(e^{\tau t}v),
 \end{equation}
and note that 
$$P_\tau v= \p^2_t v + \Delta_{g_0}v+2\tau\p_tv + \tau^2v.$$
We claim that
\begin{multline}
\label{est_1}
\left|\int_M P_\tau v\, \p_t v\,dV_g\right| + C\tau^2\|v\|_{W^{2,\infty}(\p M)}^2+C\tau^2\|\p_\nu v\|_{W^{1,\infty}(\p M)}^2\\
+C\tau^2\|\p^2_\nu v\|_{L^{\infty}(\p M)}^2
\geq 2\tau \|\p_t v\|_{L^2(M)}^2.
\end{multline}
To show \eqref{est_1} we begin by writing
\begin{align*}
\begin{split}
	\int_M P_\tau v\, \p_t v\,dV_g = &2\tau \int_M |\p_t v|^2 \,dV_g \\
	& +\underbrace{\int_M \p_t^2v\,\p_t v\, dV_g}_{\text{I}}+\underbrace{\int_M \Delta_{g_0}v\,\p_t v\, dV_g}_{\text{II}}
	+\underbrace{\int_M\tau^2v\,\p_t v\, dV_g}_{\text{III}}.
\end{split}
\end{align*}
Note that $M \Subset \R\times M_0$ and $dV_g= dt\,dV_{g_0}$. We can use integration by parts to bound each of the terms I--III as follows.
For I, we first note that
$$\int_{\hat{M}} \p_t^2\hat v\,\p_t \hat v\, dV_g=\frac{1}{2}\int_{\hat M} \p_t(|\p_t \hat v|^2)\,dV_g
=0.$$ 
Together with the estimate \eqref{extension_v_bound}, we obtain
\begin{multline*}
|\text{I}|= \left|\int_{\hat M\setminus M}\p_t^2\hat v\,\p_t \hat v\, dV_g\right|
\leq C\left(\|\p_\nu^2 v\|^2_{L^{\infty}(\p M)}+\|\p_\nu v\|^2_{W^{1,\infty}(\p M)}+\|v\|^2_{W^{2,\infty}(\p M)}\right).\end{multline*}
For II, since $\left[\p_t, \Delta_g\right]=0$ on $(\hat M,g)$, we may apply integration by parts again to deduce that
$$\int_{\hat M} \Delta_{g_0}\hat v\,\p_t \hat v\, dV_g=0.$$
Thus, using \eqref{extension_v_bound}, we can show analogously to term I that
$$ |\text{II}| \leq C\left(\|\p_\nu^2 v\|^2_{L^{\infty}(\p M)}+\|\p_\nu v\|^2_{W^{1,\infty}(\p M)}+\|v\|^2_{W^{2,\infty}(\p M)}\right).$$
Finally for the term III we first note that
$$\tau^2\int_M \hat v\,\p_t \hat v\, dV_g=\frac{\tau^2}{2}\int_M \p_t(\hat{v}^2)\,dV_g=0.$$
Thus, using \eqref{extension_v_bound}, we have
$$|\text{III}|\leq 
C\tau^2\left(\|\p_\nu^2 v\|^2_{L^{\infty}(\p M)}+\|\p_\nu v\|^2_{W^{1,\infty}(\p M)}+\|v\|^2_{W^{2,\infty}(\p M)}\right).$$
Combining the previous three bounds yields the claimed estimate \eqref{est_1}. Using \eqref{est_1} and applying the Cauchy-Schwarz inequality 
$$\left|\int_M P_\tau v\, \p_t v\,dV_g\right| \leq \frac{1}{4\tau}\|P_\tau v\|_{L^2(M)}^2 + \tau\|\p_t v\|_{L^2(M)}^2,$$
we deduce that
\begin{multline}
\label{est_2}
\|P_\tau v\|_{L^2(M)}^2+ C\tau^3\|v\|_{W^{2,\infty}(\p M)}^2+C\tau^3\|\p_\nu v\|_{W^{1,\infty}(\p M)}^2\\
+C\tau^3\|\p^2_\nu v\|_{L^{\infty}(\p M)}^2
\geq \tau^2 \|\p_t v\|_{L^2(M)}^2,
\end{multline}
We recall that by the standard Poincar\'{e} inequality on $\hat{M}$, there exists $C>0$ such that
$$ \|\p_t w\|_{L^2(\hat{M})} \geq  C\|w\|_{L^2(\hat{M})}\qquad \forall w \in H^1_0(\hat{M}).$$
Using the latter bound together with a bound analogous to \eqref{extension_v_bound} for extending $C^1(M)$ functions into $C^{1}_c(\hat{M})$, we can derive the following Poincar\'{e} type inequality
\begin{equation}
\label{poincare_bd}
\|\p_t v\|_{L^2(M)} \geq  C_1\|v\|_{L^2(M)} - C_2\|v\|_{W^{1,\infty}(\p M)}-C_3\|\p_\nu v\|_{L^{\infty}(\p M)},
\end{equation}
for all $v\in C^1(M)$, where the positive constants $C_1$, $C_2$ and $C_3$ only depend on $(M,g)$. 

Via the bounds \eqref{est_2}--\eqref{poincare_bd}, we deduce that
\begin{multline}
\label{est_3}
\|(P_\tau-V) v\|_{L^2(M)}^2+ C\tau^3\|v\|_{W^{2,\infty}(\p M)}^2+C\tau^3\|\p_\nu v\|_{W^{1,\infty}(\p M)}^2\\
+C\tau^3\|\p^2_\nu v\|_{L^{\infty}(\p M)}^2
\geq \tau^2 \|v\|_{L^2(M)}^2.
\end{multline} 
This proves the assertion.
\end{proof}

\section{Computations of $\mathbf{D}_{ik}$}\label{Appendix D_ikl}

In the end of this paper, we compute the values $\mathbf{D}_{ik}$, for different sub-indices $i,k\in \{1,2,3,4,5\}$. Recalling that
\begin{align*}
	&\zeta_1=\xi_1, \qquad \zeta_2=\xi_2, \qquad  \\
	&\zeta_3=
	\left(1+\sqrt{\frac{2}{2-\delta}}\right)\xi_3 ,  \quad \zeta_4=\left(1+\sqrt{\frac{2}{2-\delta}}\right)\xi_4, \\
	&\zeta_5=\sqrt{\frac{2}{2-\delta}}(\xi_1+\xi_2),
\end{align*}
and
\begin{align*}
	& \ol \zeta_1=\abs{\zeta_1}e_1+ \mathbf{i}\zeta_1, \quad \ol \zeta_2=\abs{\zeta_2}e_1+\mathbf{i}\zeta_2, \\
	&\ol \zeta_3=\abs{\zeta_3}e_1+\mathbf{i}\zeta_3, \quad \ol \zeta_4=-\abs{\zeta_4}e_1+\mathbf{i}\zeta_4,\\
	& \ol \zeta_5=-\abs{\zeta_5}e_1+\mathbf{i}\zeta_5,
\end{align*}
where 
\begin{align*}
	&\abs{\xi_1}=\abs{\xi_2}=1, \quad \langle \xi_1 , \xi_2 \rangle =1-\delta,\\
	&\xi_3=-\frac{1}{1+\delta}(\xi_1+\delta \xi_2), \quad \xi_4=-\frac{1}{1+\delta}(\delta\xi_1 + \xi_2).
\end{align*}
Via straightforward computations, we have 
\begin{align*}
 &\langle \xi_1 , \xi_2\rangle = 1- \delta, \quad  \langle \xi_1,\xi_3 \rangle=-\frac{1+\delta-\delta^2}{1+\delta}, \quad  \langle \xi_1, \xi_4\rangle=-\frac{1}{1+\delta}, \\
 &\langle \xi_2 , \xi_3\rangle = -\frac{1}{1+\delta}, \quad  \langle \xi_2,\xi_4 \rangle=-\frac{1+\delta-\delta^2}{1+\delta},  \text{ and }  \langle \xi_3,\xi_4 \rangle =\frac{1+\delta+\delta^2-\delta^3}{(1+\delta)^2}.
\end{align*}

By 
$$
\mathbf{D}_{ij}=\langle \ol \zeta_i+\ol \zeta_j,  \ol \zeta_i+\ol \zeta_j \rangle,
$$
for different $i,k\in \{1,2,3,4,5\}$, direct computations yield that 
\begin{align}\label{D_12}
	\begin{split}
		\mathbf{D}_{12}= &\left( \abs{\zeta_1}e_1+ \mathbf{i}\zeta_1+\abs{\zeta_2}e_1+\mathbf{i}\zeta_2\right) \cdot \left( \abs{\zeta_1}e_1+ \mathbf{i}\zeta_1+\abs{\zeta_2}e_1+\mathbf{i}\zeta_2\right) \\
		=& 2 \left( |\zeta_1||\zeta_2|-\langle \zeta_1, \zeta_2 \rangle \right) \\
		=& 2 \left( |\xi_1||\xi_2|-\langle \xi_1, \xi_2 \rangle \right) = 2\delta,
	\end{split}
\end{align}
\begin{align}\label{D_13}
\begin{split}
		\mathbf{D}_{13}= & \left(\abs{\zeta_1}e_1+ \mathbf{i}\zeta_1 +\abs{\zeta_3}e_1+\mathbf{i}\zeta_3\right) \cdot  \left(\abs{\zeta_1}e_1+ \mathbf{i}\zeta_1 +\abs{\zeta_3}e_1+\mathbf{i}\zeta_3\right) \\
	=&2 \left(\abs{\zeta_1}\abs{\zeta_3}-\langle \zeta_1, \zeta_3 \rangle \right) \\
	=&2 	\left(1+\sqrt{\frac{2}{2-\delta}}\right) \left( \abs{\xi_1} \abs{\xi_3}   -\langle	\xi_1, \xi_3 \rangle   \right) \\
	=&2 	\left(1+\sqrt{\frac{2}{2-\delta}}\right) \left( 2+2\delta+\mathcal{O}(\delta^2)\right),
\end{split}
\end{align}
\begin{align}\label{D_14}
	\begin{split}
		\mathbf{D}_{14}=& \left(\abs{\zeta_1}e_1+ \mathbf{i}\zeta_1 -\abs{\zeta_4}e_1+\mathbf{i}\zeta_4\right) \cdot  \left(\abs{\zeta_1}e_1+ \mathbf{i}\zeta_1 -\abs{\zeta_4}e_1+\mathbf{i}\zeta_4\right) \\
		=&-2\left(1+\sqrt{\frac{2}{2-\delta}}\right) \left(\abs{\xi_1}\abs{\xi_4}+\langle \xi_1 ,\xi_4 \rangle \right) \\
		=&-2\left(1+\sqrt{\frac{2}{2-\delta}}\right) \frac{\delta +\mathcal{O}(\delta^2)}{1+\delta},
	\end{split}
\end{align}

\begin{align}\label{D_15}
	\begin{split}
	      \mathbf{D}_{15}=& \left( \abs{\zeta_1}e_1+ \mathbf{i}\zeta_1-\abs{\zeta_5}e_1+\mathbf{i}\zeta_5 \right) \cdot  \left( \abs{\zeta_1}e_1+ \mathbf{i}\zeta_1-\abs{\zeta_5}e_1+\mathbf{i}\zeta_5 \right) \\
	      =& -2 \left( \abs{\zeta_1}\abs{\zeta_5} + \langle \zeta_1, \zeta_5 \rangle \right) \\
	      = &-2 \sqrt{\frac{2}{2-\delta}} \left( \abs{\xi_1}\abs{\xi_1 +\xi_2} +\langle \xi_1,\xi_1+\xi_2\rangle \right) \\
	      =&-8+\frac{\delta}{2}+\mathcal{O}(\delta^2),
	\end{split}
\end{align}

\begin{align}\label{D_23}
 \begin{split}
 	\mathbf{D}_{23}=&\left(\abs{\zeta_2}e_1+\mathbf{i}\zeta_2+\abs{\zeta_3}e_1+\mathbf{i}\zeta_3 \right)\cdot \left( \abs{\zeta_2}e_1+\mathbf{i}\zeta_2+\abs{\zeta_3}e_1+\mathbf{i}\zeta_3 \right) \\
 	=& 	2	\left(1+\sqrt{\frac{2}{2-\delta}}\right)\left( \abs{\xi_2}\abs{\xi_3} -\langle \xi_2 ,\xi_3\rangle \right)  \\
 	=&2 	\left(1+\sqrt{\frac{2}{2-\delta}}\right) \left( \frac{2+\delta+\mathcal{O}(\delta^2)}{1+\delta} \right).
 \end{split}
\end{align}

In order to compute $\mathbf{D}_{24}$ more carefully, let us recall the Taylor expansion of $\sqrt{1+\delta}=1+\frac{\delta}{2}-\frac{\delta^2}{8}+\mathcal{O}(\delta^3)$, then we have 
\begin{align}\label{D_24}
	\begin{split}
		\mathbf{D}_{24} = & \left(\abs{\zeta_2}e_1+\mathbf{i}\zeta_2 -\abs{\zeta_4}e_1+\mathbf{i}\zeta_4\right) \cdot  \left(\abs{\zeta_2}e_1+\mathbf{i}\zeta_2 -\abs{\zeta_4}e_1+\mathbf{i}\zeta_4\right) \\
		=& -2 \left( \abs{\zeta_2}\abs{\zeta_4} +\langle \zeta_2, \zeta_4 \rangle \right)  \\
		=& -2\left(1+\sqrt{\frac{2}{2-\delta}}\right) \left( \abs{\xi_2}\abs{\xi_4} +\langle \xi_2, \xi_4 \rangle \right)  \\
		=& -2\left(1+\sqrt{\frac{2}{2-\delta}}\right) \left( \sqrt{1+2\delta-\delta^2}-(1+\delta-\delta^2) \right) \\
		=& -2\left(1+\sqrt{\frac{2}{2-\delta}}\right)  \left(1 +\delta -\delta^2 +\mathcal{O}(\delta^3)-(1+\delta-\delta^2) \right)\\
		=&-2 \left(1+\sqrt{\frac{2}{2-\delta}}\right)  \frac{\mathcal{O}(\delta^3)}{1+\delta},
	\end{split}
\end{align}

\begin{align}\label{D_25}
	\begin{split}
		\mathbf{D}_{25} =& \left(\abs{\zeta_2}e_1+\mathbf{i}\zeta_2-\abs{\zeta_5}e_1+\mathbf{i}\zeta_5 \right)\cdot \left(\abs{\zeta_2}e_1+\mathbf{i}\zeta_2-\abs{\zeta_5}e_1+\mathbf{i}\zeta_5 \right) \\
		=& -2 \left( \abs{\zeta_2}\abs{\zeta_5}  +\langle \zeta_2,\zeta_5 \rangle \right) \\
		=&-2\sqrt{\frac{2}{2-\delta}} \left(\sqrt{4-2\delta}+2-\delta \right) \\
		=&-8+\delta +\mathcal{O}(\delta^2),
	\end{split}
\end{align}

\begin{align}\label{D_34}
	\begin{split}
		\mathbf{D}_{34} =& \left( \abs{\zeta_3}e_1+\mathbf{i}\zeta_3-\abs{\zeta_4}e_1+\mathbf{i}\zeta_4 \right) \cdot \left( \abs{\zeta_3}e_1+\mathbf{i}\zeta_3-\abs{\zeta_4}e_1+\mathbf{i}\zeta_4 \right) \\
		=& -2 \left( \abs{\zeta_3}\abs{\zeta_4} + \langle \zeta_3,\zeta_4 \rangle \right) \\
		=& -2\left(1+\sqrt{\frac{2}{2-\delta}}\right)^2\left( \abs{\xi_3}\abs{\xi_4} + \langle \xi_3,\xi_4 \rangle \right) \\
		=& -2\left(1+\sqrt{\frac{2}{2-\delta}}\right)^2 \frac{1}{(1+\delta)^2} \left(2+3\delta  -\delta^3 \right) ,
	\end{split}
\end{align}

\begin{align}\label{D_35}
	\begin{split}
		\mathbf{D}_{35}=& \left( \abs{\zeta_3}e_1+\mathbf{i}\zeta_3-\abs{\zeta_5}e_1+\mathbf{i}\zeta_5 \right) \cdot  \left( \abs{\zeta_3}e_1+\mathbf{i}\zeta_3-\abs{\zeta_5}e_1+\mathbf{i}\zeta_5 \right) \\
		=& -2 \left( \abs{\zeta_3}\abs{\zeta_5}+\langle \zeta_3 , \zeta_5\rangle \right) \\
		= &-\frac{2}{1+\delta}	\left(1+\sqrt{\frac{2}{2-\delta}}\right)\sqrt{\frac{2}{2-\delta}} \left( \sqrt{4-2\delta}(1+\delta+\mathcal{O}(\delta^2)) - 2-\delta+\delta^2\right) \\
		=&-\frac{2}{1+\delta}	\left(1+\sqrt{\frac{2}{2-\delta}}\right)\sqrt{\frac{2}{2-\delta}} \\
		& \qquad \times \left((2-\frac{\delta}{2}+\mathcal{O}(\delta^2))(1+\delta+\mathcal{O}(\delta^2)) - 2-\delta+\delta^2\right)  \\
		=&-\frac{1}{1+\delta}	\left(1+\sqrt{\frac{2}{2-\delta}}\right)\sqrt{\frac{2}{2-\delta}} \left( \delta+\mathcal{O}(\delta^2) \right) 
	\end{split}
\end{align}
and similarly,
\begin{align}\label{D_45}
	\begin{split}
		\mathbf{D}_{45}=& \left( -\abs{\zeta_4}e_1+\mathbf{i}\zeta_4-\abs{\zeta_5}e_1+\mathbf{i}\zeta_5 \right) \cdot  \left( -\abs{\zeta_4}e_1+\mathbf{i}\zeta_4-\abs{\zeta_5}e_1+\mathbf{i}\zeta_5 \right) \\
		=&2 \left( \abs{\zeta_4}\abs{\zeta_5}-\langle \zeta_4, \zeta_5 \rangle \right)  \\
		= & 2 \left(1+\sqrt{\frac{2}{2-\delta}}\right)\sqrt{\frac{2}{2-\delta}} \left( \abs{\xi_4}\abs{\xi_1 +\xi_2}-\langle \xi_4, \xi_1+\xi_2 \rangle \right)  \\
		=&\frac{2}{1+\delta} \left(1+\sqrt{\frac{2}{2-\delta}}\right)\sqrt{\frac{2}{2-\delta}} \left( \sqrt{4-2\delta}(1+\delta+\mathcal{O}(\delta^2))+2+\delta-\delta^2\right)\\
		=&\frac{2}{1+\delta} \left(1+\sqrt{\frac{2}{2-\delta}}\right)\sqrt{\frac{2}{2-\delta}}  \left(4+\frac{5}{2}\delta + \mathcal{O}(\delta^2)\right).
	\end{split}
\end{align}

\begin{proof}[Proof of Lemma \ref{lem:Edelta}]
	With \eqref{D_12}--\eqref{D_45} at hand, let us split the analysis into two cases.
	
    \noindent\textbf{(1)} By using \eqref{D_23}, \eqref{D_24} and \eqref{D_34}, we have that $\frac{1}{\mathbf{D}_{23}+\mathbf{D}_{24}+\mathbf{D}_{34}}$ is a bounded as $\delta\to 0$.
		Similarly, \eqref{D_13}, \eqref{D_14} and \eqref{D_34} imply that $\frac{1}{\mathbf{D}_{13}+\mathbf{D}_{14}+\mathbf{D}_{34}}$ is also bounded as $\delta\to 0$. Similarly $\frac{1}{\mathbf{D}_{12}+\mathbf{D}_{13}+\mathbf{D}_{23}}$ is bounded as $\delta\to 0$.  On the other hand, by \eqref{D_12}, \eqref{D_14} and \eqref{D_24}, we observe that  $\frac{1}{\mathbf{D}_{12}+\mathbf{D}_{14}+\mathbf{D}_{24}}=\mathcal{O}(\delta^{-1})$. 
		Meanwhile, $\mathbf{D}_{15}^{-1}$, $\mathbf{D}_{25}^{-1}$ and $\mathbf{D}_{45}^{-1}$ are bounded as $\delta \to 0$, but $\mathbf{D}_{35}^{-1}=\mathcal{O}(\delta^{-1})$. 
		
		\medskip

	\noindent\textbf{(2)}  Similarly, $\frac{1}{\mathbf{D}_{12}}\frac{1}{\mathbf{D}_{34}}=\mathcal{O}(\delta^{-1})$,   $\frac{1}{\mathbf{D}_{13}}\frac{1}{\mathbf{D}_{24}}=\mathcal{O}(\delta^{-3})$ and  $\frac{1}{\mathbf{D}_{14}}\frac{1}{\mathbf{D}_{23}}=\mathcal{O}(\delta^{-1})$.
	
	\medskip
	
	Therefore, combining the above, we conclude that 
	\begin{align*}
		\begin{split}
			\mathbf{E}_\delta =& \left| \frac{1}{\mathbf{D}_{15}}\left(\frac{1}{\mathbf{D}_{23}+\mathbf{D}_{24}+\mathbf{D}_{34}}\right) + \frac{1}{\mathbf{D}_{25}}\left(\frac{1}{\mathbf{D}_{13}+\mathbf{D}_{14}+\mathbf{D}_{34}}\right)   \right. \\		 
			&\qquad  + \frac{1}{\mathbf{D}_{35}}\left(\frac{1}{\mathbf{D}_{12}+\mathbf{D}_{14}+\mathbf{D}_{24}}\right) + \frac{1}{\mathbf{D}_{45}}\left(\frac{1}{\mathbf{D}_{12}+\mathbf{D}_{13}+\mathbf{D}_{23}}\right) \\
			&\qquad   \left.  + \frac{1}{\mathbf{D}_{12}}\frac{1}{\mathbf{D}_{34}}+  \frac{1}{\mathbf{D}_{13}}\frac{1}{\mathbf{D}_{24}} + \frac{1}{\mathbf{D}_{14}}\frac{1}{\mathbf{D}_{23}}\right| \\
			\geq &\frac{C_0}{\delta^3} -\frac{C_1}{\delta^2}-C_2>0,
		\end{split}
	\end{align*}
	for all sufficiently small $\delta>0$, where $C_0$, $C_1$ and $C_2$ are some positive constants independent of $\delta$. Hence, the coefficient $\mathbf{E}_\delta=\mathcal{O}(\delta^{-3})\neq 0$ 
	for all sufficiently small $\delta>0$.
\end{proof}

\vskip0.5cm

\noindent\textbf{Acknowledgment.} A.F gratefully acknowledges support of the Fields institute for research in mathematical sciences. T.L. was supported by the Academy of Finland (Centre of Excellence in Inverse Modeling and Imaging, grant numbers 284715 and 309963). The work of Y.-H. Lin is partially supported by the Ministry of Science and Technology Taiwan, under the Columbus Program: MOST-110-2636-M-009-007.

\bibliographystyle{alpha}
\bibliography{ref}

\newcommand{\etalchar}[1]{$^{#1}$}
\begin{thebibliography}{DSFKSU09}

\bibitem[CF20]{CF20}
C{\u{a}}t{\u{a}}lin~I C{\^a}rstea and Ali Feizmohammadi.
\newblock A density property for tensor products of gradients of harmonic
  functions and applications.
\newblock {\em arXiv preprint arXiv:2009.11217}, 2020.

\bibitem[CF21]{MR4227095}
C\u{a}t\u{a}lin~I. C\^{a}rstea and Ali Feizmohammadi.
\newblock An inverse boundary value problem for certain anisotropic quasilinear
  elliptic equations.
\newblock {\em J. Differential Equations}, 284:318--349, 2021.

\bibitem[CFK{\etalchar{+}}21]{MR4300916}
C\u{a}t\u{a}lin~I. C\^{a}rstea, Ali Feizmohammadi, Yavar Kian, Katya Krupchyk,
  and Gunther Uhlmann.
\newblock The {C}alder\'{o}n inverse problem for isotropic quasilinear
  conductivities.
\newblock {\em Adv. Math.}, 391:Paper No. 107956, 31, 2021.

\bibitem[CNV19]{MR3964222}
C\u{a}t\u{a}lin~I. C\^{a}rstea, Gen Nakamura, and Manmohan Vashisth.
\newblock Reconstruction for the coefficients of a quasilinear elliptic partial
  differential equation.
\newblock {\em Appl. Math. Lett.}, 98:121--127, 2019.

\bibitem[DSFKSU09]{DosSantosFerreira2009}
David Dos Santos~Ferreira, Carlos~E. Kenig, Mikko Salo, and Gunther Uhlmann.
\newblock Limiting carleman weights and anisotropic inverse problems.
\newblock {\em Inventiones mathematicae}, 178(1):119--171, Oct 2009.

\bibitem[FKLS16]{ferreira2013calderon}
David Dos~Santos Ferreira, Yaroslav Kurylev, Matti Lassas, and Mikko Salo.
\newblock The {C}alder{\'o}n problem in transversally anisotropic geometries.
\newblock {\em J. Eur. Math. Soc. (JEMS)}, 18:2579--2626, 2016.

\bibitem[FKSU09]{ferreira2009limiting}
David Dos~Santos Ferreira, Carlos Kenig, Mikko Salo, and Gunther Uhlmann.
\newblock Limiting {C}arleman weights and anisotropic inverse problems.
\newblock {\em Inventiones mathematicae}, 178(1):119--171, 2009.

\bibitem[FLO21]{MR4336252}
Ali Feizmohammadi, Matti Lassas, and Lauri Oksanen.
\newblock Inverse problems for nonlinear hyperbolic equations with disjoint
  sources and receivers.
\newblock {\em Forum Math. Pi}, 9:Paper No. e10, 2021.

\bibitem[FO20]{FO19}
Ali Feizmohammadi and Lauri Oksanen.
\newblock An inverse problem for a semi-linear elliptic equation in
  {R}iemannian geometries.
\newblock {\em Journal of Differential Equations}, 269(6):4683--4719, 2020.

\bibitem[HS02]{MR1944036}
David Hervas and Ziqi Sun.
\newblock An inverse boundary value problem for quasilinear elliptic equations.
\newblock {\em Comm. Partial Differential Equations}, 27(11-12):2449--2490,
  2002.

\bibitem[HUZ20]{hintz2020inverse}
Peter Hintz, Gunther Uhlmann, and Jian Zhai.
\newblock An inverse boundary value problem for a semilinear wave equation on
  lorentzian manifolds.
\newblock {\em arXiv preprint arXiv:2005.10447}, 2020.

\bibitem[IN95]{victorN}
Victor Isakov and A~Nachman.
\newblock Global uniqueness for a two-dimensional elliptic inverse problem.
\newblock {\em Trans.of AMS}, 347:3375--3391, 1995.

\bibitem[IS94]{isakov1994global}
Victor Isakov and John Sylvester.
\newblock Global uniqueness for a semilinear elliptic inverse problem.
\newblock {\em Communications on Pure and Applied Mathematics},
  47(10):1403--1410, 1994.

\bibitem[Isa93]{isakov1993uniqueness}
Victor Isakov.
\newblock On uniqueness in inverse problems for semilinear parabolic equations.
\newblock {\em Archive for Rational Mechanics and Analysis}, 124(1):1--12,
  1993.

\bibitem[KLU18]{kurylev2018inverse}
Yaroslav Kurylev, Matti Lassas, and Gunther Uhlmann.
\newblock Inverse problems for {L}orentzian manifolds and non-linear hyperbolic
  equations.
\newblock {\em Inventiones mathematicae}, 212(3):781--857, 2018.

\bibitem[KU20a]{MR3168271}
Yavar Kian and Gunther Uhlmann.
\newblock Recovery of nonlinear terms for reaction diffusion equations from
  boundary measurements.
\newblock {\em arXiv preprint arXiv:2011.06039}, 2020.

\bibitem[KU20b]{KrUh20}
Katya Krupchyk and Gunther Uhlmann.
\newblock {Inverse problems for nonlinear magnetic Schrödinger equations on
  conformally transversally anisotropic manifolds}.
\newblock {\em arXiv e-prints arXiv:2009.05089}, 2020.

\bibitem[KU20c]{MR4052205}
Katya Krupchyk and Gunther Uhlmann.
\newblock A remark on partial data inverse problems for semilinear elliptic
  equations.
\newblock {\em Proc. Amer. Math. Soc.}, 148(2):681--685, 2020.

\bibitem[Lin21]{lin2020monotonicity}
Yi-Hsuan Lin.
\newblock Monotonicity-based inversion of fractional semilinear elliptic
  equations with power type nonlinearities.
\newblock {\em Calculus of Variations and Partial Differential Equations},
  accepted for publication, 2021.

\bibitem[LL22]{lai2022inverse}
Ru-Yu Lai and Yi-Hsuan Lin.
\newblock Inverse problems for fractional semilinear elliptic equations.
\newblock {\em Nonlinear Analysis}, 216:112699, 2022.

\bibitem[LLL21]{lin2021determining}
Yi-Hsuan Lin, Hongyu Liu, and Xu~Liu.
\newblock Determining a nonlinear hyperbolic system with unknown sources and
  nonlinearity.
\newblock {\em arXiv preprint arXiv:2107.10219}, 2021.

\bibitem[LLLS21a]{LLLS2019inverse}
Matti Lassas, Tony Liimatainen, Yi-Hsuan Lin, and Mikko Salo.
\newblock Inverse problems for elliptic equations with power type
  nonlinearities.
\newblock {\em Journal de math{\'e}matiques pures et appliqu{\'e}es},
  145:44--82, 2021.

\bibitem[LLLS21b]{LLLS2021b}
Matti Lassas, Tony Liimatainen, Yi-Hsuan Lin, and Mikko Salo.
\newblock Partial data inverse problems and simultaneous recovery of boundary
  and coefficients for semilinear elliptic equations.
\newblock {\em Revista Matemática Iberoamericana}, 37:1553--1580, 2021.

\bibitem[LLLZ21]{lin2021simultaneous}
Yi-Hsuan Lin, Hongyu Liu, Xu~Liu, and Shen Zhang.
\newblock Simultaneous recoveries for semilinear parabolic systems.
\newblock {\em arXiv preprint arXiv:2111.05242}, 2021.

\bibitem[LLPMT21]{lassas2021stability}
Matti Lassas, Tony Liimatainen, Leyter Potenciano-Machado, and Teemu Tyni.
\newblock Stability estimates for inverse problems for semi-linear wave
  equations on lorentzian manifolds.
\newblock {\em arXiv preprint arXiv:2106.12257}, 2021.

\bibitem[LLS16]{lassas2016conformal}
Matti Lassas, Tony Liimatainen, and Mikko Salo.
\newblock {The Calder{\'o}n problem for the conformal Laplacian}.
\newblock {\em arXiv e-prints 1612.07939}, 2016.

\bibitem[LLS20]{LLS18}
Matti Lassas, Tony Liimatainen, and Mikko Salo.
\newblock The {P}oisson embedding approach to the {C}alder\'{o}n problem.
\newblock {\em Math. Ann.}, 377(1-2):19--67, 2020.

\bibitem[LLST22]{MR4332042}
Tony Liimatainen, Yi-Hsuan Lin, Mikko Salo, and Teemu Tyni.
\newblock Inverse problems for elliptic equations with fractional power type
  nonlinearities.
\newblock {\em J. Differential Equations}, 306:189--219, 2022.

\bibitem[LUW17]{lassas2017determination}
Matti Lassas, Gunther Uhlmann, and Yiran Wang.
\newblock Determination of vacuum space-times from the {E}instein-{M}axwell
  equations.
\newblock {\em arXiv preprint arXiv:1703.10704}, 2017.

\bibitem[LUW18]{lassas2018inverse}
Matti Lassas, Gunther Uhlmann, and Yiran Wang.
\newblock Inverse problems for semilinear wave equations on {L}orentzian
  manifolds.
\newblock {\em Communications in Mathematical Physics}, 360:555--609, 2018.

\bibitem[MnU20]{MR4138229}
Claudio Mu\~{n}oz and Gunther Uhlmann.
\newblock The {C}alder\'{o}n problem for quasilinear elliptic equations.
\newblock {\em Ann. Inst. H. Poincar\'{e} Anal. Non Lin\'{e}aire},
  37(5):1143--1166, 2020.

\bibitem[Sha21]{MR4197837}
Ravi Shankar.
\newblock Recovering a quasilinear conductivity from boundary measurements.
\newblock {\em Inverse Problems}, 37(1):Paper No. 015014, 24, 2021.

\bibitem[SU97]{sun1997inverse}
Ziqi Sun and Gunther Uhlmann.
\newblock Inverse problems in quasilinear anisotropic media.
\newblock {\em American Journal of Mathematics}, 119(4):771--797, 1997.

\bibitem[Sun96]{sun1996}
Ziqi Sun.
\newblock On a quasilinear inverse boundary value problem.
\newblock {\em Math. Z.}, 221(2):293--305, 1996.

\bibitem[Sun04]{sun2004inverse}
Ziqi Sun.
\newblock Inverse boundary value problems for a class of semilinear elliptic
  equations.
\newblock {\em Advances in Applied Mathematics}, 32(4):791--800, 2004.

\bibitem[Sun10]{sun2010inverse}
Ziqi Sun.
\newblock An inverse boundary-value problem for semilinear elliptic equations.
\newblock {\em Electronic Journal of Differential Equations (EJDE)[electronic
  only]}, 37:1--5, 2010.

\bibitem[UZ21a]{MR4182329}
Gunther Uhlmann and Jian Zhai.
\newblock Inverse problems for nonlinear hyperbolic equations.
\newblock {\em Discrete Contin. Dyn. Syst.}, 41(1):455--469, 2021.

\bibitem[UZ21b]{MR4299822}
Gunther Uhlmann and Jian Zhai.
\newblock On an inverse boundary value problem for a nonlinear elastic wave
  equation.
\newblock {\em J. Math. Pures Appl. (9)}, 153:114--136, 2021.

\bibitem[WZ19]{wang2016quadratic}
Yiran Wang and Ting Zhou.
\newblock Inverse problems for quadratic derivative nonlinear wave equations.
\newblock {\em Communications in Partial Differential Equations},
  44(11):1140--1158, 2019.

\end{thebibliography}

\end{document}